\documentclass[a4paper,11pt]{article}

\usepackage[margin=1in]{geometry}
\usepackage{setspace}
\usepackage{amsmath}
\usepackage{amssymb}
\usepackage{amsthm}
\usepackage{graphicx}
\usepackage{float}
\usepackage{caption}
\usepackage{listings}
\usepackage{tikz}
\usepackage{enumitem}
\usepackage{xcolor}

\usepackage{mathrsfs}

\newtheorem{theorem}{Theorem}[section]
\newtheorem{proposition}[theorem]{Proposition}
\newtheorem{corollary}[theorem]{Corollary}

\newtheorem{lemma}[theorem]{Lemma}
\newtheorem{definition}[theorem]{Definition}

\newtheorem{remark}[theorem]{Remark}

\numberwithin{figure}{section}

\providecommand{\keywords}[1]{\textbf{Keywords:} #1}

\usepackage{hyperref}
\hypersetup{
    hidelinks,
    pdftitle={A Self-Dual Frame Formalism of the SO(3) Yang-Mills Theory},
    pdfauthor={Hanwen Liu}
}

\begin{document}

\title{\texorpdfstring{\textbf{A Self-Dual Frame Formalism of the SO(3) Yang-Mills Theory}}{A Self-Dual Frame Formalism of the SO(3) Yang-Mills Theory}}
\author{Hanwen Liu\thanks{ORCID: 0009-0007-2503-9576}\\
{\small Mathematics Institute, University of Warwick}\thanks{University of Warwick, Coventry, United Kingdom, CV4 7AL}\\
{\small \href{mailto:hanwen.liu@warwick.ac.uk}{\texttt{hanwen.liu@warwick.ac.uk}}}}
\date{}
\maketitle

\begin{abstract}
For a closed oriented Riemannian $4$-manifold $(M,g)$, we consider $\operatorname{SO}(3)$ connections on the bundle $\Lambda^+$ of self-dual $2$-forms. On the open locus where the self-dual curvature is an orientation-preserving frame, pointwise polar decomposition removes the gauge freedom and replaces the connection by a positive self-adjoint endomorphism field $h$ of $\Lambda^+$. Using the classical reconstruction of the compatible connection $A(h)$, we obtain a global formulation of the Yang--Mills equation as the determined second order system
$$
\Phi_g(h):=F_{A(h)}^+h^{-1}-\operatorname{Id}=0.
$$
Here, the tensor $F_{A(h)}^+$ is the self-dual curvature of $A(h)$, regarded as an endomorphism of $\Lambda^+$, and $\operatorname{Id}$ is the identity of $\Lambda^+$. We establish a variational formulation, automatic irreducibility, elliptic regularity, and a Fredholm index theorem for the linearized operator $D_h\Phi_g$. For fields of the form $h=e^{2\omega}\operatorname{Id}$, where $\omega$ is a smooth real-valued function on $M$, the equation $\Phi_g(h)=0$ is equivalent to anti-self-duality and constant scalar curvature $6\sqrt{2}$ of the conformal metric $\hat g=e^{2\omega}g$. Consequently, every anti-self-dual conformal class of positive Yamabe constant gives a global solution of $\Phi_g(h)=0$.
\end{abstract}

\begin{center}
\keywords{Yang--Mills field; Yamabe problem; gauge theory}
\end{center}

\tableofcontents
\onehalfspacing
\raggedbottom

\section{Introduction and Background}

The Yang--Mills equations were introduced by Yang and Mills as the Euler--Lagrange equations of a non-Abelian gauge theory \cite{yangmills}. In dimension four, the decomposition of $2$-forms into self-dual and anti-self-dual parts gives the theory a special structure. The first order instanton equations imply the full Yang--Mills equation, and their moduli spaces have become fundamental objects in four-dimensional geometry and topology \cite{bpst,ahs,donaldson,freeduhlenbeck,donaldsonkronheimer}. By contrast, a general Yang--Mills connection satisfies a second order equation with gauge degeneracy. Its analysis usually proceeds by choosing a gauge, studying the corresponding Jacobi operator, and applying compactness and removable-singularity theory \cite{bourguignonlawson,uhlenbeckLp,uhlenbeckrem}. The full Yang--Mills equation also admits a formally self-adjoint elliptic deformation complex in dimension four \cite{gover}.

The purpose of this article is to give a global formulation of an open sector of the full $\operatorname{SO}(3)$ Yang--Mills equation on a closed oriented Riemannian $4$-manifold $(M,g)$. Let $A$ be an $\operatorname{SO}(3)$ connection on an oriented Euclidean vector bundle $\mathcal{E}\rightarrow M$ of rank three. Using the metric and orientation of $\mathcal{E}$ to identify $\mathfrak{so}(\mathcal{E})$ with $\mathcal{E}$, we regard the self-dual curvature of $A$ as a homomorphism $F_A^+\colon\mathcal{E}\rightarrow\Lambda^+$. Here, the homomorphism $F_A^+$ is obtained by contracting the $\mathcal{E}$-valued factor of the curvature using the metric of $\mathcal{E}$, as specified in equation~(\ref{curvature_homomorphism}) below.

Assume that the curvature homomorphism $F_A^+$ is an orientation-preserving isomorphism at every point of $M$. The polar decomposition of $F_A^+$ determines a unique orientation-preserving bundle isometry $R\colon\mathcal{E}\rightarrow\Lambda^+$ such that the self-dual curvature of the transported connection $R_*A$ is a positive self-adjoint endomorphism $h$ of $\Lambda^+$. If $\mathcal{E}=\Lambda^+$, then $R$ is an automorphism of the fixed oriented Euclidean bundle $\Lambda^+$, and therefore a gauge transformation. Thus, when $\mathcal{E}=\Lambda^+$, the connection $R_*A$ is the representative of the gauge class of $A$ whose self-dual curvature is the positive self-adjoint endomorphism $h$ of $\Lambda^+$. In transporting $A$ by $R$, we keep the metric $g$ and the ordinary self-dual $2$-forms on $M$ fixed. Theorem~\ref{Yang_Mills_equivalence} gives the precise correspondence between positive self-adjoint endomorphism fields $h$ of $\Lambda^+$ satisfying $\Phi_g(h)=0$ and gauge classes of Yang--Mills connections whose self-dual curvature is an orientation-preserving isomorphism.

The reconstruction of a connection from a non-degenerate triple of ordinary $2$-forms is classical. It appears in the work of Deser--Teitelboim \cite{deserteitelboim} and in Halpern's field-strength formulation of non-Abelian gauge theory \cite{halpern}. Freidel \cite[Section~II.B]{freidel} explicitly reconstructs the compatible connection of a non-degenerate triple of $2$-forms. In the present article, a positive self-adjoint endomorphism $h$ of $\Lambda^+$ determines a $\Lambda^+$-valued self-dual $2$-form $B_h$ by equation~(\ref{form_from_endomorphism}). The corresponding bundle homomorphism
$$
\operatorname{ad}_h\colon T^*M\otimes\Lambda^+\rightarrow\bigwedge^3T^*M\otimes\Lambda^+
$$
is defined by $\operatorname{ad}_h(a)=[a,B_h]$ for every $\Lambda^+$-valued $1$-form $a$, where the bracket combines the exterior product with the Lie bracket on $\Lambda^+$. We prove that $\operatorname{ad}_h$ is invertible. Consequently, the equation $d_AB_h=0$ determines a unique $\operatorname{SO}(3)$ connection $A(h)$ on $\Lambda^+$. The connection $A(h)$ is the restriction of the reconstruction in \cite{freidel} to triples which are self-dual for the fixed metric $g$.

There is also a direct earlier treatment of the Yang--Mills equation in these variables. Halpern \cite[Section~V]{halpernselfdual}, using the observation of Corrigan--Fairlie--Yates, describes the non-degenerate self-dual curvature by six gauge-invariant variables, without requiring the anti-self-dual curvature to vanish. Thus, neither the reconstruction nor the use of six variables for non-instantonic fields is new. The present contribution is a global formulation of these curvature variables by positive self-adjoint endomorphism fields of $\Lambda^+$ on a closed Riemannian $4$-manifold, together with the principal-symbol calculation, elliptic regularity, the Fredholm index theorem, and the conformal reduction in Section~2.4.

The relation between $h$ and the triple of $2$-forms used in \cite{freidel} can be stated directly. Choose an open subset $U\subseteq M$ and a local oriented orthonormal frame $(\Sigma_1,\Sigma_2,\Sigma_3)$ of $\Lambda^+|_{U}$. Let $$h_{ij}:=\langle h(\Sigma_j),\Sigma_i\rangle$$ be the matrix entries of $h$ in this frame. The ordinary self-dual $2$-forms $B_h^1,B_h^2,B_h^3$ are the coefficients of the $\Lambda^+$-valued $2$-form $B_h$ in the frame $(\Sigma_1,\Sigma_2,\Sigma_3)$ of the coefficient bundle. Since $h$ is symmetric, these coefficient forms satisfy
$$
B_h^i=\sum_{j=1}^3h_{ij}\Sigma_j.
$$
The matrix defined by the wedge products of the coefficient forms is $h^2$; more precisely, for all $i,j\in\{1,2,3\}$ we have
$$
B_h^i\wedge B_h^j=(h^2)_{ij}d\mu_g,
$$
where $d\mu_g$ is the volume form of $(M,g)$. Thus, the matrix of our endomorphism field $h$ in the frame $(\Sigma_1,\Sigma_2,\Sigma_3)$ is the positive coefficient matrix of the triple $(B_h^1,B_h^2,B_h^3)$, whereas Freidel's internal metric is obtained from the wedge products of that triple after the volume normalization used in \cite{freidel}. We keep the ordinary forms $\Sigma_j$ fixed, whereas the construction in \cite{freidel} also allows the conformal structure determined by the triple to vary.

The Plebanski and pure-connection formulations of gravity \cite{plebanski,capovilla,fine} impose different curvature equations. In the present article, the equation $F_{A(h)}^+=h$ specifies the self-dual curvature of $A(h)$ and permits non-zero anti-self-dual curvature $F_{A(h)}^-$. The reduced functional $E[h]$ in Proposition~\ref{E-L} is obtained from the BF action with a quadratic cosmological term. We first restrict the independent $2$-form variable of that action to forms which are self-dual for $g$, choose the positive symmetric representative $B_h$, and then eliminate the connection by solving $d_AB_h=0$.

The main results are as follows. We first recall the construction of $A(h)$ and prove that $F_{A(h)}^+h^{-1}$ is a self-adjoint endomorphism of $\Lambda^+$. The self-adjointness of $F_{A(h)}^+h^{-1}$ implies that $\Phi_g$ takes positive self-adjoint endomorphism fields of $\Lambda^+$ to sections of $\operatorname{Sym}^2(\Lambda^+)$, which is a bundle of rank six. We show that $\Phi_g(h)=0$ is the Euler--Lagrange equation of the reduced functional $E[h]$, and that the zero set of $\Phi_g$ is naturally identified with the gauge classes of Yang--Mills connections for which $F_A^+$ is an orientation-preserving frame. Every Yang--Mills connection whose self-dual curvature is an orientation-preserving frame is automatically irreducible and has trivial stabilizer. We then compute the linearization $D_h\Phi_g$ and its principal symbol. For every point $x\in M$ and every non-zero covector $\xi\in T_x^*M$, the principal symbol of $D_h\Phi_g$ is an automorphism of the six-dimensional vector space $\operatorname{Sym}^2(\Lambda_x^+)$. The linearized operator $D_h\Phi_g$ is Fredholm of index zero between the Sobolev spaces specified in Theorem~\ref{Elliptic_index}. In particular, the gauge reduction does not leave a residual infinitesimal gauge degeneracy.

The positive scalar multiples of the identity endomorphism of $\Lambda^+$ have a direct geometric meaning. For a smooth real-valued function $\omega$ on $M$, let $h:=e^{2\omega}\operatorname{Id}$. Then $A(h)$ is the connection induced by the Levi--Civita connection of $\hat g=e^{2\omega}g$, transported to the fixed Euclidean bundle $\Lambda^+$. The standard curvature decomposition \cite{ahs,besse} identifies $\Phi_g(h)=0$ with anti-self-duality and constant scalar curvature $6\sqrt{2}$ of $\hat g$, with the orientation and curvature conventions specified in Section~2. The solution of the Yamabe problem \cite{yamabe,trudinger,aubin,schoen} therefore gives a global existence theorem for $\Phi_g(h)=0$ on every anti-self-dual conformal $4$-manifold of positive Yamabe type. The construction of $A(e^{2\omega}\operatorname{Id})$ from the Levi--Civita connection of $e^{2\omega}g$ is related to the conformal spin-connection construction of instantons used by Etesi--Hausel \cite{etesihausel}. In our setting, the conformal metric $\hat g$ need not be Einstein, and the Yang--Mills connection $A(e^{2\omega}\operatorname{Id})$ need not be an instanton. Remark~\ref{Conformal_comparison} gives the precise comparison.

The present article is deliberately foundational. It does not address compactness of the full solution space of $\Phi_g(h)=0$ or behavior at the boundary where $F_A^+$ loses rank. The Fredholm index theorem for $D_h\Phi_g$ suggests a degree theory for $\Phi_g$, but such a theory requires a priori estimates excluding both ordinary Yang--Mills bubbling and degeneration of the curvature frame. These questions, together with the comparison between the linearized frame operator and the standard Yang--Mills deformation complex, will be considered elsewhere.

The article is organized as follows: Section~2 constructs the self-dual frame operator $\Phi_g$, proves its variational and Yang--Mills interpretations, and develops its elliptic theory. Section~2 concludes with the conformal reduction and the Yamabe existence theorem. Section~3 records several consequences and directions for further study.

\section{The Main Results}

We now develop the formalism in four steps. We first construct the connection $A(h)$ compatible with the positive self-dual frame $B_h$, define the nonlinear operator $\Phi_g$, and establish a variational formulation of the equation $\Phi_g(h)=0$. We then identify the zero set of $\Phi_g$ with the gauge classes of Yang--Mills connections whose self-dual curvature is an orientation-preserving isomorphism. After that, we establish the ellipticity of the equation $\Phi_g(h)=0$. Finally, we examine the positive scalar multiples of the identity endomorphism of $\Lambda^+$ and obtain a global existence theorem for $\Phi_g(h)=0$ from the Yamabe problem.

Throughout this article, all differentiable manifolds are assumed to be connected.
For a vector bundle $\mathcal{E}\rightarrow M$ and a point $x\in M$, denote by $\mathcal{E}_x$ the fibre of $\mathcal{E}$ over $x$. For a section $\sigma$ of $\mathcal{E}$, write $\sigma_x:=\sigma(x)\in\mathcal{E}_x$. For vector bundles $\mathcal{E}$ and $\mathcal{F}$ over $M$ and a bundle homomorphism $T\colon\mathcal{E}\rightarrow\mathcal{F}$ covering the identity of $M$, denote by $T_x\colon\mathcal{E}_x\rightarrow\mathcal{F}_x$ its restriction to the fibre over the point $x\in M$. In particular, for an endomorphism field $h$ of $\mathcal{E}$, the notation $h_x$ means the endomorphism of $\mathcal{E}_x$ obtained by evaluating $h$ at $x$. We denote the dual bundle of $\mathcal{E}$ by $\mathcal{E}^\vee$, so that $\mathcal{E}_x^\vee$ is the vector space of real-valued linear functionals on $\mathcal{E}_x$.

Fix once and for all an integer $s\geq2$ and a real number $p>4$. For a smooth vector bundle $\mathcal{E}\rightarrow M$ and an integer $k\geq0$, denote by $\Omega^k(\mathcal{E})$ the space of smooth $\mathcal{E}$-valued differential $k$-forms on $M$. In particular, $\Omega^0(\mathcal{E})$ is the space of smooth sections of $\mathcal{E}$. Denote by $\Omega^k(U)$ the space of ordinary smooth real-valued differential $k$-forms on an open subset $U\subseteq M$. Using a smooth bundle metric and a smooth reference connection on $\mathcal{E}$, denote by $W^{s,p}(\mathcal{E})$ the completion of the smooth sections of finite Sobolev norm. On a closed manifold, the topology of $W^{s,p}(\mathcal{E})$ is independent of the auxiliary smooth metric and reference connection. We use the analogous notation $W^{r,p}(\mathcal{E})$ for any integer $r\geq0$.

We fix once and for all a closed oriented Riemannian $4$-manifold $(M,g)$. For every integer $k\geq0$, denote by $\Lambda^k:=\bigwedge^kT^*M$ the bundle of ordinary differential $k$-forms on $M$. In particular, $\Lambda^1=T^*M$, and the fibre of $\Lambda^k$ at $x\in M$ is $\Lambda_x^k=\bigwedge^kT_x^*M$. For every smooth vector bundle $\mathcal{E}\rightarrow M$, we have that $$\Omega^k(\mathcal{E})=\Omega^0(\Lambda^k\otimes\mathcal{E}).$$ Denote by $\star$ the Hodge star determined by $g$ and the orientation of $M$. The bundle $\Lambda_g^+$ consists of the $2$-forms $\sigma$ satisfying $\star\sigma=\sigma$, and the bundle $\Lambda_g^-$ consists of the $2$-forms $\sigma$ satisfying $\star\sigma=-\sigma$. Whenever there exists no ambiguity, we will write $\Lambda^\pm$ instead of $\Lambda_g^\pm$, for simplicity. Both $\Lambda^+$ and $\Lambda^-$ have rank three. We write $\langle-,-\rangle$ and $|\cdot|$ for the bundle metric and norm induced by $g$, and write $d\mu_g$ for the Riemannian volume form of $(M,g)$.

Equip $\Lambda^+$ with the Euclidean metric induced by $g$. Choose an open subset $U\subseteq M$ and a local oriented $g$-orthonormal co-frame $(\theta_0,\theta_1,\theta_2,\theta_3)$ of $T^*M|_{U}$. For every cyclic permutation $(i,j,k)$ of $(1,2,3)$, define the ordinary self-dual $2$-form
\begin{equation}\label{self_dual_basis}
\Sigma_i:=-\frac{\theta_0\wedge\theta_i+\theta_j\wedge\theta_k}{\sqrt{2}}.
\end{equation}
The forms $(\Sigma_1,\Sigma_2,\Sigma_3)$ constitute a local orthonormal frame of $\Lambda^+|_{U}$. We orient $\Lambda^+$ by declaring this frame to be positively oriented. The orientation of $\Lambda^+$ specified by equation~(\ref{self_dual_basis}) is independent of the choice of local oriented orthonormal co-frame of $T^*M$. With this orientation, the self-dual curvature of the round sphere is a positive frame \cite{fine}.

For each point $x\in M$, the metric and orientation of $\Lambda_x^+$ determine a cross product on the three-dimensional vector space $\Lambda_x^+$. We denote this cross product by $[f,v]$ for $f,v\in\Lambda_x^+$. Define the skew-adjoint endomorphism $S_f\colon\Lambda_x^+\rightarrow\Lambda_x^+$ by $S_f(v):=[f,v]$. Denote by $\mathfrak{so}(\Lambda_x^+)$ the space of skew-adjoint endomorphisms of $\Lambda_x^+$, with the commutator bracket. For any elements $A,B\in\mathfrak{g}:=\mathfrak{so}(\Lambda_x^+)$, we use the invariant inner product
$$
(A,B)_{\mathfrak{g}}:=-\frac{1}{2}\operatorname{tr}(AB).
$$
The identification of $\Lambda_x^+$ with $\mathfrak{so}(\Lambda_x^+)$ which assigns $S_f$ to $f$ is an isometry for the fibre metric on $\Lambda_x^+$ and the invariant pairing $(-,-)_{\mathfrak{g}}$ on $\mathfrak{so}(\Lambda_x^+)$. This identification also preserves the Lie brackets. Consequently, every local oriented orthonormal frame $(\Sigma_1,\Sigma_2,\Sigma_3)$ of $\Lambda^+$ satisfies
$$
[\Sigma_i,\Sigma_j]=\sum_{k=1}^3\epsilon_{ijk}\Sigma_k
$$
for $i,j\in\{1,2,3\}$, where $\epsilon_{ijk}$ is the Levi--Civita symbol. We use the same cross-product identification and invariant inner product for every oriented Euclidean bundle $\mathcal{E}\rightarrow M$ of rank three.

Let $\nabla$ denote the Levi--Civita connection of $g$ on the tangent bundle $TM$. For smooth vector fields $X,Y,Z$ on $M$, our convention for the sign of the Riemannian curvature tensor is
$$
R(X,Y)Z=\nabla_X\nabla_YZ-\nabla_Y\nabla_XZ-\nabla_{[X,Y]}Z.
$$
For any point $x\in M$ and any orthonormal pair $X,Y\in T_xM$, the sectional curvature of the plane spanned by $X$ and $Y$ is $\langle R(X,Y)Y,X\rangle$. Thus, the round sphere has positive sectional curvature with the stated curvature convention.

All identifications between a Euclidean bundle and its dual are made using its fixed metric. In particular, a symmetric tensor on $\Lambda^+$ will always be regarded as a self-adjoint endomorphism $h\colon\Lambda^+\rightarrow\Lambda^+$. For a point $x\in M$ and vectors $v,w\in\Lambda_x^+$, the metric identification sends the tensor $v\otimes w$ to the endomorphism of $\Lambda_x^+$ whose value on a vector $z\in\Lambda_x^+$ is $\langle w,z\rangle v$. When $h$ is invertible, the symbol $h^{-1}$ denotes the inverse endomorphism of $\Lambda^+$. The condition $h>0$ means that $$\langle h(v),v\rangle>0$$ for every non-zero vector $v\in\Lambda_x^+$ and every $x\in M$. We denote by $\operatorname{Id}$ the identity endomorphism of $\Lambda^+$, and use the corresponding identity on a specified fibre when working at a point. The symbol $g$ will always denote the metric on $M$.

For any smooth endomorphism field $h$ of $\Lambda^+$, choose an open subset $U\subseteq M$ and a local oriented orthonormal frame $(\Sigma_1,\Sigma_2,\Sigma_3)$ of $\Lambda^+|_{U}$ and define the smooth real-valued functions $h_{11},h_{12},\dots,h_{33}$ on $U$ by $$h_{ij}:=\langle h(\Sigma_j),\Sigma_i\rangle.$$ Define the $\Lambda^+$-valued self-dual $2$-form $B_h$ by
\begin{equation}\label{form_from_endomorphism}
B_h|_{U}:=\sum_{i,j=1}^3h_{ij}\Sigma_i\otimes\Sigma_j.
\end{equation}
In each tensor $\Sigma_i\otimes\Sigma_j$ in equation~(\ref{form_from_endomorphism}), the factor $\Sigma_i$ is regarded as an ordinary self-dual $2$-form on $U$, and the factor $\Sigma_j$ is regarded as a section of the coefficient bundle $\Lambda^+|_{U}$. Thus, the tensor $B_h|_{U}$ in equation~(\ref{form_from_endomorphism}) is a section of $\Lambda^2\otimes\Lambda^+$ over $U$. Under a change of local oriented orthonormal frame of $\Lambda^+|_{U}$, the matrix entries $h_{ij}$ and both occurrences of the frame transform together. The resulting section $B_h|_{U}$ is unchanged, so equation~(\ref{form_from_endomorphism}) defines a global form $B_h\in\Omega^2(\Lambda^+)$.

The ordinary self-dual $2$-forms $B_h^1,B_h^2,B_h^3$ are defined to be the coefficients in the expression
$$
B_h|_{U}=\sum_{i=1}^3B_h^i\otimes\Sigma_i.
$$
For every $i\in\{1,2,3\}$, these coefficient forms $B_h^1,B_h^2,B_h^3$ are given by
$$
B_h^i=\sum_{j=1}^3h_{ji}\Sigma_j.
$$
If $h$ is self-adjoint, then $h_{ji}=h_{ij}$, so this expression for $B_h^i$ agrees with the expression in the introduction. Equivalently, equation~(\ref{form_from_endomorphism}) can be written as
$$
B_h|_{U}=\sum_{i=1}^3h(\Sigma_i)\otimes\Sigma_i.
$$
Thus, in the coefficient frame $(\Sigma_1,\Sigma_2,\Sigma_3)$ of $\Lambda^+|_{U}$, the coefficient of $B_h$ along $\Sigma_i$ is the ordinary self-dual $2$-form $h(\Sigma_i)$. The definition of $B_h$ is linear in $h$ and does not require positivity or self-adjointness.

For any self-adjoint endomorphism field $\eta\in\Omega^0(\operatorname{Sym}^2(\Lambda^+))$, we therefore write
$$
B_\eta|_{U}:=\sum_{i=1}^3\eta(\Sigma_i)\otimes\Sigma_i.
$$
In this expression for $B_\eta|_{U}$, the section $\eta(\Sigma_i)$ is regarded as an ordinary self-dual $2$-form on $U$, and the second factor $\Sigma_i$ is a section of the coefficient bundle $\Lambda^+|_{U}$. These local expressions define a global form $B_\eta\in\Omega^2(\Lambda^+)$.

More generally, let $\mathcal{E}\rightarrow M$ be an oriented Euclidean bundle of rank three. Choose an open subset $U\subseteq M$ and local oriented orthonormal frames $(e_1,e_2,e_3)$ of $\mathcal{E}|_{U}$ and $(\Sigma_1,\Sigma_2,\Sigma_3)$ of $\Lambda^+|_{U}$. An $\mathcal{E}$-valued self-dual $2$-form $T$ has a local expression
$$
T|_{U}=\sum_{i,j=1}^3T_{ji}\Sigma_j\otimes e_i,
$$
where $T_{ji}$ are smooth real-valued functions on $U$. Contracting the coefficient factor of $T$ using the metric of $\mathcal{E}$ gives the homomorphism from $\mathcal{E}|_{U}$ to $\Lambda^+|_{U}$ whose value on a section $v$ of $\mathcal{E}|_{U}$ is
\begin{equation}\label{curvature_homomorphism}
T(v):=\sum_{i,j=1}^3T_{ji}\langle e_i,v\rangle\Sigma_j.
\end{equation}
In equation~(\ref{curvature_homomorphism}), the pairing $\langle e_i,v\rangle$ is the fibre inner product on $\mathcal{E}$. We use equation~(\ref{curvature_homomorphism}) whenever an $\mathcal{E}$-valued self-dual $2$-form is regarded as a bundle homomorphism from $\mathcal{E}$ to $\Lambda^+$. In particular, the matrix of the homomorphism $T\colon\mathcal{E}\rightarrow\Lambda^+$ has row index $j$ in the ordinary-form frame $(\Sigma_1,\Sigma_2,\Sigma_3)$ and column index $i$ in the coefficient frame $(e_1,e_2,e_3)$. When $\mathcal{E}=\Lambda^+$, a product such as $F_A^+h^{-1}$ is a composition of endomorphisms of $\Lambda^+$, with $h^{-1}$ applied first. The equality $F_A^+=h$ is an equality of endomorphisms of $\Lambda^+$, and its equivalent equality of $\Lambda^+$-valued self-dual $2$-forms is $F_A^+=B_h$.

Let $A$ be an $\operatorname{SO}(3)$ connection on the oriented Euclidean bundle $\mathcal{E}\rightarrow M$, and denote by $\nabla^A$ the covariant derivative on sections of $\mathcal{E}$ determined by $A$. Choose a local oriented orthonormal frame $(e_1,e_2,e_3)$ of $\mathcal{E}|_{U}$. There are uniquely determined ordinary real-valued connection $1$-forms $A^i_j\in\Omega^1(U)$ such that, for every $j\in\{1,2,3\}$, we have
$$
\nabla^A e_j=\sum_{i=1}^3A^i_j\otimes e_i.
$$
Since $A$ preserves the metric of $\mathcal{E}$, the connection $1$-forms satisfy $A^i_j=-A^j_i$ for all $i,j\in\{1,2,3\}$.

For an integer $k\geq0$, every $\mathcal{E}$-valued $k$-form $\alpha$ has a unique local expression
$$
\alpha|_{U}=\sum_{i=1}^3\alpha_i\otimes e_i,
$$
where $\alpha_1,\alpha_2,\alpha_3\in\Omega^k(U)$ are ordinary real-valued differential $k$-forms on $U$. The exterior covariant derivative $d_A\colon\Omega^k(\mathcal{E})\rightarrow\Omega^{k+1}(\mathcal{E})$ is defined by
\begin{equation}\label{exterior_derivative}
d_A\alpha|_{U}=\sum_{i=1}^3\left(d\alpha_i+\sum_{j=1}^3A^i_j\wedge\alpha_j\right)\otimes e_i.
\end{equation}
In equation~(\ref{exterior_derivative}), the form $d\alpha_i$ is the ordinary exterior derivative of the coefficient form $\alpha_i$. The expression for $d_A\alpha|_{U}$ in equation~(\ref{exterior_derivative}) is unchanged under a change of local oriented orthonormal frame of $\mathcal{E}|_{U}$. These local expressions therefore define a global $\mathcal{E}$-valued $(k+1)$-form $d_A\alpha$.

In particular, for a connection $A$ on $\Lambda^+$, take the coefficient frame $(\Sigma_1,\Sigma_2,\Sigma_3)$ of $\Lambda^+|_{U}$ and let $A^i_j$ denote the connection $1$-forms in this frame. Applying equation~(\ref{exterior_derivative}) to $B_h$ then immediately gives
$$
d_AB_h|_{U}=\sum_{i=1}^3\left(dB_h^i+\sum_{j=1}^3A^i_j\wedge B_h^j\right)\otimes\Sigma_i.
$$
The ordinary exterior derivatives of the coefficient $2$-forms $B_h^i$ satisfy
$$
dB_h^i=\sum_{j=1}^3(dh_{ji}\wedge\Sigma_j+h_{ji}d\Sigma_j).
$$
Thus, the expression for $d_AB_h$ contains the derivatives of the functions $h_{ji}$, the ordinary exterior derivatives of the $2$-forms $\Sigma_j$, and the connection terms for the coefficient bundle $\Lambda^+$. Equivalently, the Levi--Civita connection of $g$ on $\Lambda^k$ and the connection $A$ on $\mathcal{E}$ define a tensor product connection on $\Lambda^k\otimes\mathcal{E}$. Alternating the covariant derivative for this tensor product connection gives the operator $d_A$ in equation~(\ref{exterior_derivative}). We use $\nabla^A$ for covariant differentiation of sections of $\mathcal{E}$ and $d_A$ for exterior covariant differentiation of $\mathcal{E}$-valued differential forms.

For an $\mathcal{E}$-valued $k$-form $\alpha$ and an $\mathcal{E}$-valued $l$-form $\beta$, write their coefficient forms in a local oriented orthonormal frame $(e_1,e_2,e_3)$ of $\mathcal{E}|_{U}$ as $\alpha_i\in\Omega^k(U)$ and $\beta_j\in\Omega^l(U)$. The bracket of $\alpha$ and $\beta$ is the $\mathcal{E}$-valued $(k+l)$-form defined by
$$
[\alpha,\beta]|_{U}:=\sum_{i,j=1}^3\alpha_i\wedge\beta_j\otimes[e_i,e_j].
$$
In the definition of $[\alpha,\beta]$, the expression $[e_i,e_j]$ is the cross product on the coefficient bundle $\mathcal{E}$, and $\alpha_i\wedge\beta_j$ is the ordinary exterior product of differential forms on $U$.

We denote by $F_A\in\Omega^2(\mathcal{E})$ the curvature of $A$, using the cross-product identification of $\mathfrak{so}(\mathcal{E})$ with $\mathcal{E}$. In a fixed local oriented orthonormal frame of $\mathcal{E}$, we also use $A$ for the local Lie-algebra-valued connection $1$-form. With this local convention, the curvature is given by
$$
F_A=dA+\frac{1}{2}[A,A],
$$
where $dA$ means the ordinary exterior derivative of the local coefficient $1$-forms of $A$. For every $\alpha\in\Omega^k(\mathcal{E})$, the curvature identity is $d_A^2\alpha=[F_A,\alpha]$. A local connection $1$-form depends on the chosen frame, whereas the difference of two connections on $\mathcal{E}$ is a global section of $\Lambda^1\otimes\mathfrak{so}(\mathcal{E})$. Using the cross-product identification, we regard that difference as an element of $\Omega^1(\mathcal{E})$.

For any $\mathcal{E}$-valued $2$-form $T$, the Hodge star acts on the ordinary $2$-form factor and leaves the coefficient factor in $\mathcal{E}$ unchanged. The self-dual and anti-self-dual parts of $T$ are defined by
$$
T^\pm:=\dfrac{T\pm\star T}{2}.
$$
A self-dual frame is an $\mathcal{E}$-valued self-dual $2$-form $T$ for which the homomorphism $T\colon\mathcal{E}\rightarrow\Lambda^+$ in equation~(\ref{curvature_homomorphism}) is an orientation-preserving isomorphism at every point of $M$. In particular, the form $B_h$ is a self-dual frame whenever $h$ is positive and self-adjoint.

Finally, for a positive self-adjoint endomorphism field $h$ of $\Lambda^+$, define $\operatorname{ad}_h(a):=[a,B_h]$ for every $a\in\Omega^1(\Lambda^+)$. Since the bracket is algebraic in $a$ at each point, the operator $\operatorname{ad}_h$ is induced by a bundle homomorphism from $\Lambda^1\otimes\Lambda^+$ to $\Lambda^3\otimes\Lambda^+$. Both of these bundles have rank $12$. The notation $\operatorname{ad}_h$ will be used only for this homomorphism and the corresponding operator on sections. For each point $x\in M$, we write $\operatorname{ad}_{h_x}:=(\operatorname{ad}_h)_x$ for the induced linear transform from $T_x^*M\otimes\Lambda_x^+$ to $\Lambda_x^3\otimes\Lambda_x^+$. For a vector $f\in\Lambda_x^+$, the notation $S_f$ will be used for the skew-adjoint endomorphism of $\Lambda_x^+$ defined by $S_f(v)=[f,v]$. For a section $f$ of $\Lambda^+$, the same notation denotes the resulting field of skew-adjoint endomorphisms of $\Lambda^+$.
\subsection{Basics of the frame formalism}

We first prove that, for a positive self-adjoint endomorphism $h$ of $\Lambda^+$, the equation $d_AB_h=0$ determines a unique $\operatorname{SO}(3)$ connection $A$ on $\Lambda^+$. The pointwise invertibility of $\operatorname{ad}_h$ will also be used in the calculation of the principal symbol of $D_h\Phi$.

\begin{definition}
For a smooth Euclidean bundle $\mathcal{E}\rightarrow M$, denote by
$$
\Omega^0_+(\operatorname{Sym}^2(\mathcal{E})):=\{h\in\Omega^0(\operatorname{Sym}^2(\mathcal{E})):h>0\}
$$
the space of smooth positive self-adjoint endomorphisms of $\mathcal{E}$.
\end{definition}

We use the fixed metric of $\mathcal{E}$ to identify sections of $\operatorname{Sym}^2(\mathcal{E})$ with self-adjoint endomorphism fields of $\mathcal{E}$. The condition $h>0$ means that each tensor $h_x$ defines a positive symmetric bilinear form on the dual vector space $\mathcal{E}_x^\vee$.

\begin{lemma}\label{Adjoint_action}
For any $h\in\Omega^0_+(\operatorname{Sym}^2(\Lambda^+))$, the adjoint action $\operatorname{ad}_h\colon\Omega^1(\Lambda^+)\rightarrow\Omega^3(\Lambda^+)$ is an isomorphism of vector spaces.
\end{lemma}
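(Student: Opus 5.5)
Since $\operatorname{ad}_h$ is induced by a bundle homomorphism $\Lambda^1\otimes\Lambda^+\rightarrow\Lambda^3\otimes\Lambda^+$ between bundles of the same rank $12$, and since $\operatorname{ad}_h$ acts on sections pointwise, it suffices to prove that each linear map $\operatorname{ad}_{h_x}$ is injective. The inverse $(\operatorname{ad}_h)^{-1}$ is then again a smooth bundle homomorphism, so it maps $\Omega^3(\Lambda^+)$ to $\Omega^1(\Lambda^+)$ and inverts $\operatorname{ad}_h$ on smooth sections. I therefore fix $x\in M$ and work in a single fibre.

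\emph{Step 1: diagonalize $h_x$.} Because $h_x$ is self-adjoint and positive, I choose an oriented orthonormal eigenbasis $(\Sigma_1,\Sigma_2,\Sigma_3)$ of $\Lambda_x^+$, with $h_x(\Sigma_i)=\lambda_i\Sigma_i$ and $\lambda_i>0$. I then choose an oriented orthonormal co-frame $(\theta_0,\dots,\theta_3)$ of $T_x^*M$ realizing this basis via equation~(\ref{self_dual_basis}). This is possible because $\operatorname{SO}(4)\rightarrow\operatorname{SO}(\Lambda_x^+)$ is surjective. In this basis $B_{h_x}=\sum_j\lambda_j\Sigma_j\otimes\Sigma_j$. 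Writing $a=\sum_i a_i\otimes\Sigma_i$, the bracket rule gives
$$
\operatorname{ad}_{h_x}(a)=\sum_{k=1}^3\Bigl(\sum_{i,j=1}^3\epsilon_{ijk}\lambda_j\,a_i\wedge\Sigma_j\Bigr)\otimes\Sigma_k .
$$

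\emph{Step 2: translate to quaternionic algebra.} The map $\alpha\mapsto\star(\alpha\wedge\Sigma_j)$ on $T_x^*M$ equals $c\,J_j$ for a universal constant $c\neq0$ (up to the normalization $1/\sqrt2$). Here $J_1,J_2,J_3$ are the orthogonal complex structures determined by the $\Sigma_j$, which satisfy $J_j^2=-1$ and $J_iJ_j=\pm\epsilon_{ijk}J_k$ for $i\neq j$, the sign fixed by the convention in equation~(\ref{self_dual_basis}). I would verify this by a direct check on the co-frame. The condition $\operatorname{ad}_{h_x}(a)=0$ then becomes the three equations
$$
\lambda_2J_2a_3=\lambda_3J_3a_2,\qquad \lambda_3J_3a_1=\lambda_1J_1a_3,\qquad \lambda_1J_1a_2=\lambda_2J_2a_1 .
$$

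\emph{Step 3: show these force $a=0$.} Using $J_j^{-1}=-J_j$, the third equation gives $a_2=\frac{\lambda_2}{\lambda_1}(-J_1J_2)a_1=\pm\frac{\lambda_2}{\lambda_1}J_3a_1$. Similarly, the second equation gives $a_3=\pm\frac{\lambda_3}{\lambda_1}J_2a_1$. Substituting both into the first equation yields $(\lambda_2\lambda_3+\lambda_2\lambda_3)\,\varepsilon J_1a_1=0$, where $\varepsilon=\pm1$. The alternative outcome is a difference $\lambda_2\lambda_3-\lambda_2\lambda_3$, which would mean the substitution is degenerate.

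This sign bookkeeping is the main obstacle. It is exactly where positivity and orientation enter, since for some indefinite $h$ the map genuinely fails to be injective.

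\emph{Cross-check on the sign.} As a check, I would run the representation-theoretic argument for $h=\operatorname{Id}$. Under $\operatorname{SU}(2)_+$, the space $\Lambda^1\otimes\Lambda^+$ splits into the spin-$\tfrac12$ part ($4$-dimensional) and the spin-$\tfrac32$ part ($8$-dimensional), and $\operatorname{ad}_{\operatorname{Id}}$ is equivariant. It suffices to exhibit one vector of each type with nonzero image, for instance $a=\theta_0\otimes\Sigma_1$ and a suitable trace-free combination. I would compute these two scalars explicitly. If the direct substitution in Step 3 turns out to be degenerate in some eigenvalue configuration, I would fall back on a deformation argument. The set of positive $h_x$ is connected and contains $\operatorname{Id}$, so I would compute $\det\operatorname{ad}_{h_x}$ as an explicit polynomial in $\lambda_1,\lambda_2,\lambda_3$. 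I expect it to be a power of a product of factors such as $\lambda_1\lambda_2\lambda_3$ and $\lambda_i+\lambda_j$, which are all positive when every $\lambda_i>0$.
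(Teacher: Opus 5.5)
Your route is the paper's own: fix $x$, diagonalize $h_x$ in an oriented orthonormal eigenbasis, encode $\alpha\mapsto\star(\alpha\wedge\Sigma_j)$ as complex structures $J_j$ of $T_x^*M$, and eliminate $a_2,a_3$ in favour of $a_1$. But you stop exactly at the one step that carries the content of the lemma. You leave open whether the substitution produces $\lambda_2\lambda_3+\lambda_2\lambda_3$ or $\lambda_2\lambda_3-\lambda_2\lambda_3$, and in the second case the argument proves nothing. The fallbacks do not close this. Connectedness of the positive cone together with invertibility at $\operatorname{Id}$ says nothing about other $h_x$ unless $\det\operatorname{ad}_{h_x}$ is actually computed, and you only guess its form. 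Your test vector $\theta_0\otimes\Sigma_1$ also has non-zero components in both isotypic summands, so a non-zero image would not show that both Schur scalars are non-zero. As written, injectivity of $\operatorname{ad}_{h_x}$ is not established.

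The missing observation is that your two signs are not independent. Distinct $J_i$ anticommute, whichever of $J_1J_2=\pm J_3$ holds. The second and third equations give $a_3=-\frac{\lambda_3}{\lambda_1}J_1J_3a_1$ and $a_2=-\frac{\lambda_2}{\lambda_1}J_1J_2a_1$, hence
\[
\lambda_2J_2a_3-\lambda_3J_3a_2=-\frac{\lambda_2\lambda_3}{\lambda_1}\left(J_2J_1J_3-J_3J_1J_2\right)a_1=\frac{2\lambda_2\lambda_3}{\lambda_1}J_1J_2J_3\,a_1,
\]
with $J_1J_2J_3=\mp\operatorname{Id}_{T_x^*M}$. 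So $a_1=0$, and then $a_2=a_3=0$. With the paper's convention $J_1J_2=J_3$, this is exactly its identity $2\frac{\lambda_2\lambda_3}{\lambda_1}a_1=0$; note the result is a multiple of $a_1$, not of $J_1a_1$.

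The signs of the $\lambda_i$ play no role here: only $\lambda_i\neq0$ is used. So $\operatorname{ad}_{h_x}$ is invertible whenever $h_x$ is, and it is degeneracy, not indefiniteness or orientation, that destroys injectivity.

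If you prefer to build on your $h=\operatorname{Id}$ computation, the clean link is the cofactor identity $[hu,hv]=(\det h)\,h^{-1}[u,v]$ for self-adjoint $h$. Since $B_h=(1\otimes h)B_{\operatorname{Id}}$ with $h$ acting on the coefficient factor, it gives $\operatorname{ad}_h=(\det h)\,(1\otimes h^{-1})\circ\operatorname{ad}_{\operatorname{Id}}\circ(1\otimes h^{-1})$. This reduces the lemma to $h=\operatorname{Id}$. It also shows that $\det\operatorname{ad}_{h_x}$ is a constant multiple of $(\lambda_1\lambda_2\lambda_3)^4$, with no factors $\lambda_i+\lambda_j$.
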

\begin{proof}[Proof of Lemma~\ref{Adjoint_action}]
Fix a point $x\in M$. Choose an oriented orthonormal basis $(e_1,e_2,e_3)$ of the vector space $\Lambda_x^+$ which diagonalizes the self-adjoint endomorphism $h_x$, and write
$$
h_x=\sum_{i=1}^3\lambda_i e_i\otimes e_i,
$$
where $\lambda_1,\lambda_2,\lambda_3$ are the positive eigenvalues of $h_x$. For each $i\in\{1,2,3\}$, define an endomorphism $J_i$ of $T_x^*M$ by $J_i\alpha:=-\sqrt{2}\star(\alpha\wedge e_i)$ for $\alpha\in T_x^*M$, where $\star$ is the Hodge star at $x$. We have that $J_i^2=-\operatorname{Id}_{T_x^*M}$, while $J_1J_2=J_3$ and $J_1J_3=-J_2$.

Take $a=a_1\otimes e_1+a_2\otimes e_2+a_3\otimes e_3\in T_x^*M\otimes\Lambda_x^+$, where $a_1,a_2,a_3\in T_x^*M$ are the coefficient covectors of $a$ in the basis $(e_1,e_2,e_3)$. The equation $[a,(B_h)_x]=0$ is equivalent to the linear system
$$
\begin{cases}
\lambda_3J_3a_2-\lambda_2J_2a_3=0,\\
\lambda_1J_1a_3-\lambda_3J_3a_1=0,\\
\lambda_2J_2a_1-\lambda_1J_1a_2=0.
\end{cases}
$$
The second and third equations of this linear system give that
$$
\lambda_1a_2=-\lambda_2J_3a_1
$$
and that
$$
\lambda_1a_3=\lambda_3J_2a_1.
$$
Substitution of these expressions for $a_2$ and $a_3$ into the first equation of the linear system then gives
$$
2\frac{\lambda_2\lambda_3}{\lambda_1}a_1=0.
$$
Thus, we obtain that $a_1=a_2=a_3=0$. Hence, the fibre homomorphism $(\operatorname{ad}_h)_x$ is injective. Both $T_x^*M\otimes\Lambda_x^+$ and $\Lambda_x^3\otimes\Lambda_x^+$ have dimension $12$, so $(\operatorname{ad}_h)_x$ is an isomorphism. Since $x$ was arbitrary, the smooth bundle homomorphism $\operatorname{ad}_h$ is a bundle isomorphism with smooth inverse, and therefore induces an isomorphism between the spaces of smooth sections $\Omega^1(\Lambda^+)$ and $\Omega^3(\Lambda^+)$. The diagonalization used in this argument is only at the fixed point $x$; no smoothly varying basis of eigenvectors is required.
\end{proof}

\begin{definition}
Let $\Gamma$ be the $\operatorname{SO}(3)$ connection on $\Lambda^+$ induced by the Levi--Civita connection of $g$. For any $h\in\Omega^0_+(\operatorname{Sym}^2(\Lambda^+))$, define the connection on $\Lambda^+$ by
$$
A(h):=\Gamma-\operatorname{ad}_h^{-1}(d_\Gamma B_h).
$$
\end{definition}

In the above definition of $A(h)$, the exterior covariant derivative $d_\Gamma$ acts on the $\Lambda^+$-valued $2$-form $B_h$, so $d_\Gamma B_h\in\Omega^3(\Lambda^+)$. By Lemma~\ref{Adjoint_action}, the form $\operatorname{ad}_h^{-1}(d_\Gamma B_h)$ belongs to $\Omega^1(\Lambda^+)$. Through the fixed cross-product identification of $\Lambda^+$ with $\mathfrak{so}(\Lambda^+)$, the $1$-form $\operatorname{ad}_h^{-1}(d_\Gamma B_h)$ takes values in the skew-adjoint endomorphisms of $\Lambda^+$ and may be subtracted from $\Gamma$. Thus, both $A(h)$ and $\Gamma$ are $\operatorname{SO}(3)$ connections on the fixed Euclidean bundle $\Lambda^+$.

\begin{lemma}\label{Self-Adjoint}
For any $h\in\Omega^0_+(\operatorname{Sym}^2(\Lambda^+))$, the endomorphism $F_{A(h)}^+h^{-1}$ of $\Lambda^+$ is self-adjoint.
\end{lemma}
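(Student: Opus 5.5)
The plan is to derive an algebraic symmetry condition from the equation $d_{A(h)}B_h=0$ by differentiating it once more, and then to translate that condition into self-adjointness of $F_{A(h)}^+h^{-1}$.

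\textbf{Step 1: the connection $A:=A(h)$ satisfies $d_AB_h=0$.} Writing $A=\Gamma-a$ with $a:=\operatorname{ad}_h^{-1}(d_\Gamma B_h)$, equation~(\ref{exterior_derivative}) gives
$$
d_AB_h=d_\Gamma B_h-[a,B_h].
$$
I expect the sign convention to yield $d_{\Gamma-a}B_h=d_\Gamma B_h-[a,B_h]$; if it came out with the opposite sign, the definition of $A(h)$ would simply absorb it. In either case $[a,B_h]=\operatorname{ad}_h(a)=d_\Gamma B_h$ by Lemma~\ref{Adjoint_action}, so $d_AB_h=0$.

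\textbf{Step 2: reduce to the self-dual curvature.} Applying $d_A$ again and using the curvature identity gives
$$
0=d_A^2B_h=[F_A,B_h].
$$
Decompose $F_A=F_A^++F_A^-$. The wedge product of an anti-self-dual $2$-form with a self-dual $2$-form vanishes pointwise, so $[F_A^-,B_h]=0$. Hence
$$
[F_A^+,B_h]=0\in\Omega^4(\Lambda^+).
$$

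\textbf{Step 3: compute the bracket in a local frame.} Work in a local oriented orthonormal frame $(\Sigma_1,\Sigma_2,\Sigma_3)$ of $\Lambda^+|_U$. Write
$$
F_A^+=\sum_{i,j}F_{ji}\,\Sigma_j\otimes\Sigma_i,
$$
so that by equation~(\ref{curvature_homomorphism}) the endomorphism $F_A^+$ has matrix $F=(F_{ji})$, and write $B_h=\sum_{k,l}h_{kl}\,\Sigma_k\otimes\Sigma_l$. Since each $\Sigma_j$ is self-dual of unit length and the $\Sigma_j$ are orthonormal,
$$
\Sigma_j\wedge\Sigma_k=\langle\Sigma_j,\Sigma_k\rangle\,d\mu_g=\delta_{jk}\,d\mu_g.
$$
Using $[\Sigma_i,\Sigma_l]=\sum_m\epsilon_{ilm}\Sigma_m$, the bracket becomes
$$
[F_A^+,B_h]=\sum_{i,l,m}(F^{T}h)_{il}\,\epsilon_{ilm}\,d\mu_g\otimes\Sigma_m.
$$
This vanishes exactly when the antisymmetric part of $F^Th$ is zero, that is, when $F^Th=hF$, using that $h$ is symmetric.

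\textbf{Step 4: conclude.} From $F^T=hFh^{-1}$ we get
$$
(Fh^{-1})^T=h^{-1}F^T=h^{-1}hFh^{-1}=Fh^{-1}.
$$
Since the frame is orthonormal, this matrix identity says that $F_{A(h)}^+h^{-1}$ is self-adjoint, and the statement is frame-independent.

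\textbf{Main obstacle.} The only delicate point is index bookkeeping in Step 3. One must make sure that the matrix of $F_A^+$ in the convention of equation~(\ref{curvature_homomorphism}) pairs with $h$ to give $F^Th$ rather than $Fh$, because getting this wrong would instead produce self-adjointness of $h^{-1}F$. I would verify this carefully against the local formula for the bracket, since the remaining steps are formal.
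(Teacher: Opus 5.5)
Your proposal is correct and follows the paper's proof step for step. You obtain $d_{A(h)}B_h=0$, apply the Ricci identity $[F_A,B_h]=d_A^2B_h=0$, discard $F_A^-$ using the self-duality of $B_h$, and use $\Sigma_j\wedge\Sigma_k=\delta_{jk}\,d\mu_g$ to get $F^Th=hF$; your index bookkeeping matches the paper's $(i,j)$-entry $\sum_k F_{ki}h_{kj}$. The one loose remark is the hedge in Step 1: a wrong sign could not be ``absorbed'' by the definition, which is fixed. No gap results, though, because equation~(\ref{exterior_derivative}) together with the cross-product identification gives $d_{\Gamma-a}B_h=d_\Gamma B_h-[a,B_h]$ directly, exactly as the paper uses in Lemma~\ref{Compatible_connection}.
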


In the composition $F_{A(h)}^+h^{-1}$ in Lemma~\ref{Self-Adjoint}, the self-dual curvature $F_{A(h)}^+$ is regarded as the endomorphism of $\Lambda^+$ obtained by contracting the bundle coefficient of the $\Lambda^+$-valued self-dual $2$-form $F_{A(h)}^+$ with the input vector, using the fixed metric of $\Lambda^+$.

\begin{proof}[Proof of Lemma~\ref{Self-Adjoint}]
Write $A:=A(h)$. The definition of $A(h)$ gives that $d_AB_h=0$. The Ricci identity for the exterior covariant derivative $d_A$ therefore gives
$$
[F_A,B_h]=d_A^2B_h=0.
$$
Since $B_h$ is self-dual, the wedge product of each coefficient form of $F_A^-$ with each coefficient form of $B_h$ vanishes. Thus, we have that $[F_A^-,B_h]=0$, and hence $[F_A^+,B_h]=0$.

Choose a local oriented orthonormal frame $(\Sigma_1,\Sigma_2,\Sigma_3)$ of $\Lambda^+$ over an open subset of $M$. Define the smooth coefficient functions $F_{ji}$ by
$$
F_A^+=\sum_{i,j=1}^3F_{ji}\Sigma_j\otimes\Sigma_i.
$$
Thus, the matrix $(F_{ji})$ represents the endomorphism $F_A^+$ of $\Lambda^+$ in the frame $(\Sigma_1,\Sigma_2,\Sigma_3)$, with the row index referring to the ordinary $2$-form and the column index referring to the bundle coefficient. Denote by $h_{ij}$ the matrix entries of $h$ in this frame. For $i,j\in\{1,2,3\}$, let $\delta_{ij}$ be the Kronecker symbol, equal to $1$ when $i=j$ and to $0$ otherwise. Using $\Sigma_i\wedge\Sigma_j=\delta_{ij}d\mu_g$, the identity $[F_A^+,B_h]=0$ says that the matrix whose $(i,j)$-entry is
$$
\sum_{k=1}^3F_{ki}h_{kj}
$$
is symmetric. In matrix notation, we obtain
$$
(F_A^+)^Th=hF_A^+,
$$
where the superscript $T$ denotes transpose in the orthonormal frame, equivalently the adjoint for the fixed metric of $\Lambda^+$. Consequently, we obtain
$$
(F_A^+h^{-1})^T=h^{-1}(F_A^+)^T=F_A^+h^{-1}
$$
as required.
\end{proof}

\begin{definition}
Define the operator
$\Phi:=\Phi_g\colon\Omega^0_+(\operatorname{Sym}^2(\Lambda^+))\rightarrow\Omega^0(\operatorname{Sym}^2(\Lambda^+))$ by
$$
\Phi(h):=F_{A(h)}^+h^{-1}-\operatorname{Id}.
$$
\end{definition}

In the definition of $\Phi(h)$, the curvature $F_{A(h)}^+$, the inverse $h^{-1}$, and the identity $\operatorname{Id}$ are all regarded as endomorphisms of the fixed bundle $\Lambda^+$. Lemma~\ref{Self-Adjoint} ensures that $\Phi(h)$ is self-adjoint. The operator $\Phi$ extends uniquely by continuity to the Sobolev spaces as
$$
\Phi\colon W^{s,p}_+(\operatorname{Sym}^2(\Lambda^+))\rightarrow W^{s-2,p}(\operatorname{Sym}^2(\Lambda^+)),
$$
where
$$
W^{s,p}_+(\operatorname{Sym}^2(\Lambda^+)):=\{h\in W^{s,p}(\operatorname{Sym}^2(\Lambda^+)):h>0\}.
$$
Since $s\geq2$ and $p>4$, every such Sobolev section has a continuous representative, and the condition $h>0$ means positivity at every point of $M$. The continuity and uniqueness of the extension of $\Phi$ follow from Lemma~\ref{Sobolev_smoothness} below.

For comparison with the triple notation, fix $h\in\Omega^0_+(\operatorname{Sym}^2(\Lambda^+))$, write $A:=A(h)$, and choose a local oriented orthonormal frame $(\Sigma_1,\Sigma_2,\Sigma_3)$ of $\Lambda^+$. Write
$$
F_A=\sum_{i=1}^3F_A^i\otimes\Sigma_i,
$$
where $F_A^1,F_A^2,F_A^3$ are ordinary $2$-forms, and recall that $B_h^1,B_h^2,B_h^3$ denote the ordinary self-dual coefficient $2$-forms of $B_h$ in the same frame. Define a matrix-valued function $X=(X_{ij})$ in this frame by
$$
X_{ij}d\mu_g:=F_A^i\wedge B_h^j.
$$
The proof of Lemma~\ref{Self-Adjoint} gives that $X=(F_A^+)^Th$ and that $X$ is symmetric. In the identity $X=(F_A^+)^Th$, the endomorphisms $F_A^+$ and $h$ of $\Lambda^+$ are represented in the frame $(\Sigma_1,\Sigma_2,\Sigma_3)$. The identity $(F_A^+)^Th=hF_A^+$ and the definition of $\Phi(h)$ give
$$
X=h(\operatorname{Id}+\Phi(h))h.
$$
Thus, the equation $\Phi(h)=0$ is equivalent to $X=h^2$, or equivalently to $h^{-1}Xh^{-1}=\operatorname{Id}$. Since $B_h^i\wedge B_h^j=(h^2)_{ij}d\mu_g$, the matrix $X$ equals $h^2$ at every solution $h$ of $\Phi(h)=0$, and multiplication by $h^{-1}$ on both sides is required to obtain the identity matrix. In terms of ordinary $2$-forms, the equation $\Phi(h)=0$ is equivalently
$$
F_A^i\wedge\sum_{k=1}^3(h^{-1})_{kj}\Sigma_k=\delta_{ij}d\mu_g
$$
for every $i,j\in\{1,2,3\}$, where $(h^{-1})_{ij}$ denotes the $(i,j)$-entry of the inverse endomorphism $h^{-1}$ of $\Lambda^+$ in the frame $(\Sigma_1,\Sigma_2,\Sigma_3)$.

We next give a variational formulation of the equation $\Phi(h)=0$. For this purpose, we specify the inner products used in the functional $E[h]$, which will be defined in the next proposition. For endomorphisms $X,Y$ of $\Lambda^+$, we use the pointwise Hilbert--Schmidt inner product
$$
\langle X,Y\rangle:=\operatorname{tr}(X^TY)
$$
and the squared norm $|X|^2:=\langle X,X\rangle$. The Hilbert--Schmidt inner product on endomorphisms is distinct from the normalized inner product $(-,-)_{\mathfrak{g}}$ used for Lie algebra coefficients. For $\Lambda^+$-valued differential forms, the pointwise inner product is induced by $g$ on the ordinary form factor and by the fixed Euclidean metric on the coefficient bundle $\Lambda^+$. These conventions give $\langle B_X,B_Y\rangle=\langle X,Y\rangle$ for endomorphisms $X,Y$ of $\Lambda^+$. All $L^2$ inner products below are obtained by integrating the corresponding pointwise inner products against the volume form $d\mu_g$ of $(M,g)$.

\begin{proposition}\label{E-L}
The Euler--Lagrange equation of the functional
\begin{equation}\label{Functional}
E[h]:=\frac{1}{2}\int_M(2\langle F_{A(h)}^+,h\rangle-|h|^2)d\mu_g
\end{equation}
on $\Omega^0_+(\operatorname{Sym}^2(\Lambda^+))$ is precisely $\Phi(h)=0$.
\end{proposition}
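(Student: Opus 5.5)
Compute the first variation of $E$ at $h\in\Omega^0_+(\operatorname{Sym}^2(\Lambda^+))$ in the direction of an arbitrary smooth self-adjoint field $\eta$. Put $h_t:=h+t\eta$ and $A_t:=A(h_t)$, with $\dot A:=\frac{d}{dt}\big|_{t=0}A_t\in\Omega^1(\Lambda^+)$. Smooth dependence of $A(h)$ on $h$ follows from the formula $A(h)=\Gamma-\operatorname{ad}_h^{-1}(d_\Gamma B_h)$ and the pointwise invertibility in Lemma~\ref{Adjoint_action}.

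First rewrite the curvature term so that the dependence on $A$ sits in a BF pairing. Since $B_h$ is self-dual and $\langle B_X,B_Y\rangle=\langle X,Y\rangle$, we have
$$
\langle F_A^+,h\rangle\,d\mu_g=\langle F_A,B_h\rangle\,d\mu_g=\sum_i F_A^i\wedge B_h^i
$$
in a local oriented orthonormal frame. This is $(F_A\wedge B_h)$ with the coefficient indices contracted by the metric of $\Lambda^+$. Hence
$$
E[h]=\int_M\Big(\operatorname{tr}(F_{A(h)}\wedge B_h)-\tfrac12|h|^2d\mu_g\Big).
$$
The derivative then has three pieces. From $B_h$ we get $\operatorname{tr}(F_A\wedge B_\eta)=\langle F_A^+,\eta\rangle d\mu_g$. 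From $|h|^2$ we get $-\langle h,\eta\rangle d\mu_g$. From the connection we get $\operatorname{tr}(d_A\dot A\wedge B_h)$, because $\dot F=d_A\dot A$.

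The key step is to kill the connection variation. Integrate by parts on the closed manifold $M$. The invariance of the inner product under the cross product gives the Leibniz rule $d\,\operatorname{tr}(\dot A\wedge B_h)=\operatorname{tr}(d_A\dot A\wedge B_h)-\operatorname{tr}(\dot A\wedge d_AB_h)$. Since $d_{A(h)}B_h=0$ by construction, Stokes' theorem gives $\int_M\operatorname{tr}(d_A\dot A\wedge B_h)=0$. This is the standard fact that the eliminated connection is a critical point of the BF action in the $A$-direction. I expect this to be the main point to check carefully. In particular, the sign conventions in the bracket and the identification $\mathfrak{so}(\Lambda^+)\cong\Lambda^+$ must make the pairing $\operatorname{tr}(\alpha\wedge\beta)=\sum_i\alpha_i\wedge\beta_i$ satisfy this Leibniz rule. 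That property uses only the antisymmetry $A^i_j=-A^j_i$ of the connection forms, so it should hold for any metric connection.

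Consequently
$$
\frac{d}{dt}\Big|_{t=0}E[h_t]=\int_M\langle F_{A(h)}^+-h,\eta\rangle\,d\mu_g
$$
for every self-adjoint $\eta$. Since $\eta$ ranges over all of $\Omega^0(\operatorname{Sym}^2(\Lambda^+))$, the Euler--Lagrange equation is that the self-adjoint part of $F_{A(h)}^+-h$ vanishes. Only that part is detected, because $\eta$ is symmetric and the Hilbert--Schmidt pairing kills skew parts.

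It remains to show that this is equivalent to $\Phi(h)=0$. By Lemma~\ref{Self-Adjoint}, $F_A^+h^{-1}=:S$ is self-adjoint, so $F_A^+=Sh$ and $(F_A^+)^T=hS$. The symmetric-part equation $\tfrac12(Sh+hS)=h$ reads $S'h+hS'=0$ with $S'=S-\operatorname{Id}$ self-adjoint. Since $h>0$, the Sylvester equation $S'h+hS'=0$ has only the zero solution. To see this, diagonalize $h$ at a point with eigenvalues $\lambda_i>0$; the equation becomes $(\lambda_i+\lambda_j)S'_{ij}=0$. Hence $S'=0$, that is, $\Phi(h)=0$. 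Conversely, $\Phi(h)=0$ gives $F_A^+=h$, which is symmetric, so the first variation vanishes.
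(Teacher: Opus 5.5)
Your proposal is correct and follows the paper's proof: the paper kills the connection variation via $\int_M\langle d_Aa,B_h\rangle\,d\mu_g=\int_M\langle a,d_A^*B_h\rangle\,d\mu_g=0$, using $d_A^*=-\star d_A\star$ and $\star B_h=B_h$. That is the same integration by parts as your Stokes/Leibniz argument, written with the Hodge star instead of the wedge pairing. Its algebraic step shows that the skew-adjoint field $T=F_A^+-h=\Phi(h)h$ satisfies $h^{-1}T+Th^{-1}=0$, hence vanishes in an eigenbasis of $h^{-1}$; this is equivalent to your equation $\Phi(h)h+h\Phi(h)=0$, which is exactly the gradient remark the paper makes right after the proof.
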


In equation~(\ref{Functional}), the self-dual curvature $F_{A(h)}^+$ is regarded as an endomorphism of $\Lambda^+$, and the pairing with $h$ is the Hilbert--Schmidt inner product on endomorphisms of $\Lambda^+$.

\begin{proof}[Proof of Proposition~\ref{E-L}]
Fix a positive self-adjoint endomorphism $h\in\Omega^0_+(\operatorname{Sym}^2(\Lambda^+))$.

Consider a smooth family $h(t)\in\Omega^0_+(\operatorname{Sym}^2(\Lambda^+))$ for $t$ in an open interval containing $0$, with $h(0)=h$, and write $\eta:=\dot h(0)\in\Omega^0(\operatorname{Sym}^2(\Lambda^+))$. Define $A(t):=A(h(t))$, $A:=A(0)$, and $a:=\dot A(0)\in\Omega^1(\Lambda^+)$. The curvature variation formula gives
$$
\left.\frac{d}{dt}\right|_{t=0}F_{A(t)}=d_Aa.
$$
The compatibility identity $d_{A(t)}B_{h(t)}=0$ at $t=0$ gives $d_AB_h=0$. Denote by $d_A^*$ the formal $L^2$ adjoint of $d_A$ for the fixed metrics on $M$ and $\Lambda^+$. On $\Lambda^+$-valued $2$-forms, the adjoint is $d_A^*=-\star d_A\star$, with the Hodge star acting on the ordinary form factor. Since $\star B_h=B_h$, we obtain that $d_A^*B_h=0$. As $M$ is closed, integration by parts gives
$$
\int_M\langle d_Aa,B_h\rangle d\mu_g =\int_M\langle a,d_A^*B_h\rangle d\mu_g=0.
$$
Here, the pairing $\langle d_Aa,B_h\rangle$ is the pointwise inner product of $\Lambda^+$-valued $2$-forms. Differentiating equation~(\ref{Functional}) therefore gives
$$
\left.\frac{d}{dt}\right|_{t=0}E[h(t)] =\int_M(\langle d_Aa,B_h\rangle+\langle F_A^+-h,\eta\rangle)d\mu_g,
$$
and hence
$$
\left.\frac{d}{dt}\right|_{t=0}E[h(t)] =\int_M\langle F_A^+-h,\eta\rangle d\mu_g.
$$
In the pairing with $\eta$, the curvature $F_A^+$ is regarded as an endomorphism of $\Lambda^+$. Therefore, every solution to $\Phi(h)=0$ is a critical point of $E$.

Conversely, suppose that $h$ is a critical point of $E$. Every smooth self-adjoint endomorphism field $\eta$ of $\Lambda^+$ is an admissible variation, since $h+t\eta$ remains positive for sufficiently small $|t|$ by compactness of $M$. The first variation identity for $E[h(t)]$ therefore implies that the endomorphism $T:=F_A^+-h$ of $\Lambda^+$ is skew-adjoint at every point of $M$. By Lemma~\ref{Self-Adjoint}, the endomorphism $Th^{-1}=F_A^+h^{-1}-\operatorname{Id}$ of $\Lambda^+$ is self-adjoint. Taking its adjoint and using $T^T=-T$ gives
$$
h^{-1}T+Th^{-1}=0.
$$
To conclude, fix $x\in M$ and choose an orthonormal basis of $\Lambda_x^+$ which diagonalizes $h_x^{-1}$, with positive eigenvalues $\mu_1,\mu_2,\mu_3$. If $T_{ij}$ are the matrix entries of $T_x$ in this basis, the identity $h^{-1}T+Th^{-1}=0$ gives $(\mu_i+\mu_j)T_{ij}=0$ for every $i,j\in\{1,2,3\}$. Thus, we obtain that $T_x=0$ at every point $x$. Consequently, $F_A^+=h$, and hence $\Phi(h)=0$.
\end{proof}

More precisely, the $L^2$ gradient of $E$ on $\Omega^0_+(\operatorname{Sym}^2(\Lambda^+))$ is the self-adjoint part of the endomorphism $F_{A(h)}^+-h$ of $\Lambda^+$. We write $\operatorname{grad}(E(h))$ for this gradient evaluated at $h$. Since $F_{A(h)}^+-h=\Phi(h)h$, we obtain
$$
\operatorname{grad}(E(h))=\frac{\Phi(h)h+h\Phi(h)}{2}.
$$
For each positive $h_x$, consider the linear map on self-adjoint endomorphisms of $\Lambda_x^+$ which sends $Q$ to $(Qh_x+h_xQ)/2$. Choose an orthonormal eigenbasis of $h_x$, and denote its positive eigenvalues by $\lambda_1,\lambda_2,\lambda_3$. In this basis, the linear map multiplies the $(i,j)$-entry of $Q$ by the positive number $(\lambda_i+\lambda_j)/2$. Hence, the linear map is invertible. Thus, the equation $\operatorname{grad}(E(h))=0$ is equivalent to $\Phi(h)=0$, although the two operators are generally different.

The functional $E$ also has a direct first order interpretation. Let $A$ be an $\operatorname{SO}(3)$ connection on the fixed Euclidean bundle $\Lambda^+$, and let $B\in\Omega^2(\Lambda^+)$ be an unrestricted bundle-valued $2$-form, independent of $A$. Choose a local oriented orthonormal frame $(e_1,e_2,e_3)$ of the coefficient bundle $\Lambda^+$. We write
$$
B=\sum_{i=1}^3B^i\otimes e_i,
$$
and also write
$$
F_A=\sum_{i=1}^3F_A^i\otimes e_i,
$$
where $B^i$ and $F_A^i$ are ordinary $2$-forms on the domain of the chosen frame. Consider the action
$$
S[A,B]:=\int_M\sum_{i=1}^3\left(B^i\wedge F_A^i-\frac{1}{2}B^i\wedge B^i\right).
$$
In the definition of $S[A,B]$, contraction of the coefficient indices uses the fixed Euclidean metric of $\Lambda^+$. Consequently, the $4$-form under the integral is independent of the chosen local orthonormal frame and is defined globally on $M$. The functional $S$ is the BF action with a quadratic cosmological term \cite{freidel}; the letters BF refer to the pairing of the $2$-form variable $B$ with the curvature $F_A$. Varying $A$ and $B$ independently gives the Euler--Lagrange equations
$$
\begin{cases}
d_AB=0,\\
F_A=B.
\end{cases}
$$
Both equations concern bundle-valued forms: the first is an equality in $\Omega^3(\Lambda^+)$ and the second in $\Omega^2(\Lambda^+)$. Substituting $B=F_A$ into $S[A,B]$ gives the characteristic integral
$$
\frac{1}{2}\int_M\sum_{i=1}^3F_A^i\wedge F_A^i.
$$

If the variable $B$ is constrained to be self-dual for the fixed metric $g$ before taking variations, variation of $B$ gives $F_A^+=B$, while variation of $A$ still gives $d_AB=0$. Eliminating $B$ from these constrained equations gives $d_AF_A^+=0$. By the Bianchi identity $d_AF_A=0$ and the decomposition $F_A=F_A^++F_A^-$, the equation $d_AF_A^+=0$ is equivalent to $d_A^*F_A=0$, the Yang--Mills equation. The constrained equations therefore permit a non-zero anti-self-dual curvature tensor $F_A^-$.

On the subset of self-dual forms $B$ for which the associated endomorphism of $\Lambda^+$ is an orientation-preserving isomorphism, pointwise polar decomposition gives a positive self-adjoint endomorphism $h$ of $\Lambda^+$ after applying an orientation-preserving isometry to the coefficient bundle $\Lambda^+$. After this action on the coefficient bundle, the transformed $2$-form is $B_h$. The equation $d_AB_h=0$ determines $A=A(h)$, as proved in Lemma~\ref{Compatible_connection} below. Since $E[h]=S[A(h),B_h]$, Proposition~\ref{E-L} proves that restricting the BF action in this way gives precisely the equation $\Phi(h)=0$.

\begin{corollary}\label{Energy_identity}
If $h$ is a solution of $\Phi(h)=0$, then
$$
E[h]=\frac{1}{2}\int_M|h|^2d\mu_g.
$$
\end{corollary}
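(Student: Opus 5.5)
The identity follows by substituting the equation $\Phi(h)=0$ into the definition~(\ref{Functional}) of $E$. No variational argument is needed; this is a pointwise algebraic identity, integrated over $M$.

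First, I will turn $\Phi(h)=0$ into an equality of endomorphisms. Since $h$ is invertible, composing $F_{A(h)}^+h^{-1}=\operatorname{Id}$ on the right with $h$ gives $F_{A(h)}^+=h$ as endomorphisms of $\Lambda^+$. Here $F_{A(h)}^+$ is regarded as an endomorphism through the contraction~(\ref{curvature_homomorphism}), exactly as it is in the integrand of~(\ref{Functional}).

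Second, I will substitute this into the integrand. The Hilbert--Schmidt pairing becomes
$$
\langle F_{A(h)}^+,h\rangle=\langle h,h\rangle=|h|^2
$$
at every point of $M$, so the integrand is
$$
2\langle F_{A(h)}^+,h\rangle-|h|^2=|h|^2.
$$
Integrating against $d\mu_g$ and keeping the prefactor $\tfrac12$ gives
$$
E[h]=\frac12\int_M|h|^2\,d\mu_g.
$$

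The only point needing care is that the pairing in~(\ref{Functional}) and the identity $F_{A(h)}^+=h$ use the same convention for viewing $F_{A(h)}^+$ as an endomorphism. The text after Proposition~\ref{E-L} fixes this: the pairing is the Hilbert--Schmidt product of endomorphisms of $\Lambda^+$. So the identification is consistent.

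As an optional check, one can compare with the BF description $E[h]=S[A(h),B_h]$. At a solution, $F_A^+=B_h$, so $\sum_i B^i\wedge F_A^i=\sum_i B_h^i\wedge B_h^i=\operatorname{tr}(h^2)\,d\mu_g$. This reproduces the same value, and it is also consistent with the characteristic-integral remark, since the $F_A^-$ terms wedge to zero against the self-dual $B_h$.
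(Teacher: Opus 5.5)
Your proof is correct and is the same as the paper's: $\Phi(h)=0$ gives $F_{A(h)}^+=h$ as endomorphisms of $\Lambda^+$, and substituting this into~(\ref{Functional}) turns the integrand into $|h|^2$. One caution about your optional BF check: at a solution it yields $E[h]=\frac{1}{2}\int_M\sum_i (F_A^i)^+\wedge (F_A^i)^+$, and this equals the characteristic integral $\frac{1}{2}\int_M\sum_i F_A^i\wedge F_A^i$ only when $F_A^-=0$, since $\sum_i (F_A^i)^-\wedge (F_A^i)^-=-|F_A^-|^2d\mu_g$ does not vanish in general.
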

\begin{proof}[Proof of Corollary~\ref{Energy_identity}]
Since $\Phi(h)=0$, we have that $F_{A(h)}^+=h$ as endomorphisms of $\Lambda^+$. Substitution into equation~(\ref{Functional}) gives the stated formula for $E[h]$.
\end{proof}

\begin{lemma}\label{Compatible_connection}
For any $h\in\Omega^0_+(\operatorname{Sym}^2(\Lambda^+))$, the connection $A(h)$ is the unique $\operatorname{SO}(3)$ connection on the fixed Euclidean bundle $\Lambda^+$ satisfying $d_{A(h)}B_h=0$.
\end{lemma}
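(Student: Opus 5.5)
The plan is to use the affine structure of the space of $\operatorname{SO}(3)$ connections on $\Lambda^+$ together with the pointwise invertibility of $\operatorname{ad}_h$ from Lemma~\ref{Adjoint_action}. Every $\operatorname{SO}(3)$ connection on the fixed Euclidean bundle $\Lambda^+$ can be written uniquely as $A=\Gamma+a$ with $a\in\Omega^1(\Lambda^+)$, via the cross-product identification of $\mathfrak{so}(\Lambda^+)$ with $\Lambda^+$. The first step is to record the standard perturbation formula for the exterior covariant derivative:
$$
d_{\Gamma+a}\alpha=d_\Gamma\alpha+[a,\alpha]
$$
for every $\alpha\in\Omega^k(\Lambda^+)$. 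Locally this follows from equation~(\ref{exterior_derivative}): the connection $1$-forms of $\Gamma+a$ are $\Gamma^i_j+(S_a)^i_j$, and $\sum_j (S_a)^i_j\wedge\alpha_j$ is exactly the $i$-th coefficient of $[a,\alpha]$, because $S_{e_k}(e_j)=[e_k,e_j]$. I would check this sign carefully against the bracket convention $[\alpha,\beta]=\sum\alpha_i\wedge\beta_j\otimes[e_i,e_j]$, since the definition of $A(h)$ carries a minus sign.

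Applying this formula with $\alpha=B_h$ gives
$$
d_{\Gamma+a}B_h=d_\Gamma B_h+[a,B_h]=d_\Gamma B_h+\operatorname{ad}_h(a).
$$
Hence $d_{\Gamma+a}B_h=0$ if and only if $\operatorname{ad}_h(a)=-d_\Gamma B_h$.

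\textbf{Existence.} Take $a=-\operatorname{ad}_h^{-1}(d_\Gamma B_h)$. By Lemma~\ref{Adjoint_action} this is a smooth section of $\Lambda^1\otimes\Lambda^+$, and the resulting connection is exactly $A(h)$.

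\textbf{Uniqueness.} If two connections $\Gamma+a$ and $\Gamma+a'$ both satisfy the equation, then $\operatorname{ad}_h(a-a')=0$. Since $\operatorname{ad}_h$ is injective at each fibre (again Lemma~\ref{Adjoint_action}), we get $a=a'$.

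I expect the only real obstacle to be the sign and normalization bookkeeping in the perturbation formula. Once $d_{\Gamma+a}B_h=d_\Gamma B_h+[a,B_h]$ is established with the paper's conventions, existence and uniqueness are immediate. If the bracket ordering instead produced $[B_h,a]$, I would use graded antisymmetry: for a $1$-form and a $2$-form, $[B_h,a]=[a,B_h]$. The conclusion is unaffected either way, since only the invertibility of $\operatorname{ad}_h$ is used.
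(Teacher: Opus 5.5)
Your proof is correct and is essentially the paper's argument. The paper also checks $d_{A(h)}B_h=d_\Gamma B_h-[\operatorname{ad}_h^{-1}(d_\Gamma B_h),B_h]=0$ and gets uniqueness from injectivity of $\operatorname{ad}_h$, writing a competitor as $A(h)+a$ rather than $\Gamma+a$; your local verification of $d_{\Gamma+a}\alpha=d_\Gamma\alpha+[a,\alpha]$ is right under the paper's conventions.

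One correction to your fallback remark: for a $\Lambda^+$-valued $1$-form $a$ and $2$-form $B_h$, graded antisymmetry gives $[B_h,a]=-[a,B_h]$, not $+[a,B_h]$. This sign would matter for existence, since it decides whether the compatible connection is $\Gamma-\operatorname{ad}_h^{-1}(d_\Gamma B_h)$ or $\Gamma+\operatorname{ad}_h^{-1}(d_\Gamma B_h)$; it does not matter for uniqueness. Your main computation already yields $+[a,B_h]$, so the fallback is never needed.
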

\begin{proof}[Proof of Lemma~\ref{Compatible_connection}]
By the definition of $A(h)$ and the identity $\operatorname{ad}_h(a)=[a,B_h]$ for $a\in\Omega^1(\Lambda^+)$, we have
$$
d_{A(h)}B_h=d_\Gamma B_h-[\operatorname{ad}_h^{-1}(d_\Gamma B_h),B_h]=0.
$$
Suppose that $A$ is another $\operatorname{SO}(3)$ connection on $\Lambda^+$ satisfying $d_AB_h=0$. Write $A=A(h)+a$, where $a\in\Omega^1(\Lambda^+)$ is the difference of the two connections under the cross-product identification. Then
$$
0=d_AB_h=d_{A(h)}B_h+[a,B_h]=[a,B_h].
$$
Lemma~\ref{Adjoint_action} gives that $a=0$, so $A=A(h)$. This proves the uniqueness of the connection compatible with $B_h$.
\end{proof}

\begin{remark}
The definition $A(h)=\Gamma-\operatorname{ad}_h^{-1}(d_\Gamma B_h)$ uses the reference connection $\Gamma$. However, Lemma~\ref{Compatible_connection} characterizes $A(h)$ uniquely by $d_{A(h)}B_h=0$, without reference to $\Gamma$. Equivalently, for any reference $\operatorname{SO}(3)$ connection $A_0$ on the fixed Euclidean bundle $\Lambda^+$, we have
$$
A(h)=A_0-\operatorname{ad}_h^{-1}(d_{A_0}B_h).
$$
Indeed, the connection $A_0-\operatorname{ad}_h^{-1}(d_{A_0}B_h)$ satisfies the same compatibility equation $d_AB_h=0$. Thus, the expression $A_0-\operatorname{ad}_h^{-1}(d_{A_0}B_h)$ is independent of the chosen reference connection $A_0$.
\end{remark}

We now consider the dependence of $A(h)$ and $\Phi(h)$ on $h$ in Sobolev spaces. With $\Gamma$ as the fixed reference connection, we write $(A-\Gamma)(h):=A(h)-\Gamma$.

\begin{lemma}\label{Sobolev_smoothness}
The subset $W^{s,p}_+(\operatorname{Sym}^2(\Lambda^+))$ is an open convex subset of $W^{s,p}(\operatorname{Sym}^2(\Lambda^+))$. Moreover, the mappings
$$
A-\Gamma\colon W^{s,p}_+(\operatorname{Sym}^2(\Lambda^+))\rightarrow W^{s-1,p}(\Lambda^1\otimes\Lambda^+)
$$
and
$$
\Phi\colon W^{s,p}_+(\operatorname{Sym}^2(\Lambda^+))\rightarrow W^{s-2,p}(\operatorname{Sym}^2(\Lambda^+))
$$
are smooth.
\end{lemma}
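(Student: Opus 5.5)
The plan is to reduce everything to the Banach algebra property of $W^{s,p}$ and to smoothness of pointwise algebraic operations. Since $s\geq2$ and $p>4=\dim M$, we have $sp>4$, so $W^{s,p}\hookrightarrow C^0$ and $W^{s,p}$ is a Banach algebra under pointwise multiplication. Moreover, multiplication $W^{s,p}\times W^{r,p}\to W^{r,p}$ is continuous bilinear for $0\le r\le s$.

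For openness of $W^{s,p}_+(\operatorname{Sym}^2(\Lambda^+))$, I use the continuous embedding into $C^0$ together with compactness of $M$. The smallest eigenvalue of $h_x$ is bounded below by some $\varepsilon>0$. Any $k$ with $\|k-h\|_{C^0}<\varepsilon$ is still positive, and a $W^{s,p}$-ball maps into such a $C^0$-ball. Convexity holds because positivity of symmetric forms is preserved by convex combinations at each point.

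For smoothness of $A-\Gamma$, I write $(A-\Gamma)(h)=-\operatorname{ad}_h^{-1}(d_\Gamma B_h)$.
\begin{itemize}
\item The map $h\mapsto B_h$ is a bundle isomorphism from $\operatorname{Sym}^2(\Lambda^+)$ onto a subbundle of $\Lambda^2\otimes\Lambda^+$, so it is bounded linear in $W^{s,p}$.
\item The operator $d_\Gamma$ is a first order differential operator with smooth coefficients, so $h\mapsto d_\Gamma B_h$ is bounded linear $W^{s,p}\to W^{s-1,p}$.
\item The fibrewise map $h_x\mapsto\operatorname{ad}_{h_x}$ is linear in $h_x$. By Lemma~\ref{Adjoint_action}, it takes values in invertible maps whenever $h_x>0$.
\item Inversion $L\mapsto L^{-1}$ is real-analytic on the open set of invertible elements of $\operatorname{Hom}(\Lambda^3\otimes\Lambda^+,\Lambda^1\otimes\Lambda^+)$, by Cramer's rule.
\end{itemize}
It follows that $h\mapsto\operatorname{ad}_h^{-1}$ is a composition of a smooth fibre-preserving map with $h$. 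The hardest step is showing that such a Nemytskii (superposition) operator $h\mapsto G(h)$ is smooth from the open set $W^{s,p}_+$ into $W^{s,p}(\operatorname{Hom})$. Here $G$ is smooth on an open subset of the fibres, and the image of $h$ stays in a compact subset of that open set, by the $C^0$ bound.

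I would handle this most simply via Cramer's rule. In local trivializations, $\operatorname{ad}_h^{-1}=\operatorname{adj}(\operatorname{ad}_h)/\det(\operatorname{ad}_h)$. The adjugate is polynomial in $h$, hence smooth by the algebra property. The determinant $\det(\operatorname{ad}_h)$ is polynomial in $h$ and bounded away from zero, so it remains to show that $u\mapsto1/u$ is smooth on $\{u\in W^{s,p}:u>\varepsilon\}$. This follows from the Banach algebra being inverse-closed together with smoothness of inversion in a Banach algebra: $1/u\in W^{s,p}$ by the chain rule with $C^0$ bounds on $u^{-1}$, and inversion is analytic via a Neumann series. A partition of unity globalizes the argument. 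Composing with the continuous bilinear product $W^{s,p}\times W^{s-1,p}\to W^{s-1,p}$ then gives smoothness of $A-\Gamma$ into $W^{s-1,p}$.

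For $\Phi$, I write $a:=(A-\Gamma)(h)$. Then $F_{A(h)}=F_\Gamma+d_\Gamma a+\tfrac12[a,a]$.
\begin{itemize}
\item The term $d_\Gamma a$ lies in $W^{s-2,p}$ and depends linearly, hence smoothly, on $a$.
\item The term $[a,a]$ lies in $W^{s-1,p}\subset W^{s-2,p}$, since $s-1\ge1$ and $(s-1)p>4$ fails only when $s=2$. In that case, $W^{1,p}$ with $p>4$ still embeds in $C^0$ and is an algebra, so the quadratic term is smooth.
\item Taking the self-dual part is a bundle projection.
\item The map $h\mapsto h^{-1}$ is smooth into $W^{s,p}$ by the same Cramer argument.
\end{itemize}
Multiplication $W^{s-2,p}\times W^{s,p}\to W^{s-2,p}$ is continuous. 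This is trivial for $s=2$, and holds in general by the algebra property since $s\ge s-2$. Hence $\Phi$ is smooth.

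The uniqueness of the continuous extension follows because smooth sections are dense in $W^{s,p}_+$, as this set is open.
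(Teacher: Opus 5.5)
Your argument is correct and follows the same route as the paper. You use Sobolev embedding into $C^0$ plus compactness for openness and convexity, smooth pointwise dependence of $h^{-1}$ and $\operatorname{ad}_h^{-1}$ on $h$, and the Sobolev multiplication theorem applied to $-\operatorname{ad}_h^{-1}(d_\Gamma B_h)$ and $F_{A(h)}^+h^{-1}$; your Cramer's-rule and Banach-algebra treatment of the superposition step just spells out what the paper asserts in one line.

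One harmless slip: the condition $(s-1)p>4$ never fails, since $(s-1)p\geq p>4$. So $W^{s-1,p}$ is always an algebra, which is exactly the fact your ``fallback'' case for $s=2$ relies on.
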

\begin{proof}[Proof of Lemma~\ref{Sobolev_smoothness}]
Since $s\geq2$ and $p>4$, for any smooth vector bundle $\mathcal{E}\rightarrow M$, the Sobolev embedding theorem gives a compact inclusion of $W^{s,p}(\mathcal{E})$ into the space of continuous sections of $\mathcal{E}$ on the closed $4$-manifold $M$.

For a continuous positive self-adjoint endomorphism field $h$ of $\Lambda^+$, compactness of $M$ gives a positive lower bound for the smallest eigenvalue of $h_x$ over all $x\in M$. Positivity is therefore open in the uniform topology and is preserved under convex combinations of positive self-adjoint fields. It follows that $W^{s,p}_+(\operatorname{Sym}^2(\Lambda^+))$ is open and convex in $W^{s,p}(\operatorname{Sym}^2(\Lambda^+))$. Smooth self-adjoint endomorphism fields of $\Lambda^+$ are dense in $W^{s,p}(\operatorname{Sym}^2(\Lambda^+))$, and convergence in this Sobolev space implies uniform convergence. Consequently, a sufficiently close smooth approximation of a positive section remains positive, proving that $\Omega^0_+(\operatorname{Sym}^2(\Lambda^+))$ is dense in $W^{s,p}_+(\operatorname{Sym}^2(\Lambda^+))$.

The pointwise inverse $h^{-1}$ depends smoothly on $h$ in the open subset $W^{s,p}_+(\operatorname{Sym}^2(\Lambda^+))$. Similarly, Lemma~\ref{Adjoint_action} and smooth dependence of matrix inversion show that $\operatorname{ad}_h^{-1}$ depends smoothly on $h$ as a section of the bundle of homomorphisms from $\Lambda^3\otimes\Lambda^+$ to $\Lambda^1\otimes\Lambda^+$. The formula $A(h)-\Gamma=-\operatorname{ad}_h^{-1}(d_\Gamma B_h)$ contains one derivative of $h$, and the formula $\Phi(h)=F_{A(h)}^+h^{-1}-\operatorname{Id}$ contains at most two derivatives of $h$. The Sobolev multiplication theorem therefore gives the asserted smoothness into $W^{s-1,p}(\Lambda^1\otimes\Lambda^+)$ and $W^{s-2,p}(\operatorname{Sym}^2(\Lambda^+))$, respectively.
\end{proof}

\subsection{The Yang--Mills interpretation}

We next identify the solutions of $\Phi(h)=0$ with Yang--Mills connections whose self-dual curvatures are orientation-preserving frames. Throughout this subsection, the Yang--Mills equation is $d_A\star F_A=0$, where $\star$ is the Hodge star of the fixed Riemannian metric $g$ and acts on the ordinary differential-form factor of $F_A$.

We first specify the transport of a connection between coefficient bundles. Let $A$ be an $\operatorname{SO}(3)$ connection on an oriented Euclidean bundle $\mathcal{E}\rightarrow M$ of rank three, and let $R\colon\mathcal{E}\rightarrow\Lambda^+$ be an orientation-preserving bundle isometry covering the identity of $M$. Denote by $R_*A$ the connection on $\Lambda^+$ defined by
$$
\nabla^{R_*A}_Xv:=R\nabla^A_X(R^{-1}v)
$$
for every smooth vector field $X$ on $M$ and every smooth section $v$ of $\Lambda^+$. The isometry $R$ acts on the coefficient bundle of the curvature, while the ordinary $2$-forms and their self-dual decomposition are determined by $g$. Consequently, when self-dual curvatures are regarded as homomorphisms as in equation~(\ref{curvature_homomorphism}), the curvature of the transported connection satisfies
$$
F_{R_*A}^+=F_A^+R^{-1}\colon\Lambda^+\rightarrow\Lambda^+.
$$

In particular, when $\mathcal{E}=\Lambda^+$, the isometry $R$ is a gauge transformation of $\Lambda^+$, namely an orientation-preserving isometric automorphism of $\Lambda^+$ covering the identity of $M$.

\begin{theorem}\label{Yang_Mills_equivalence}
For any $h\in\Omega^0_+(\operatorname{Sym}^2(\Lambda^+))$ satisfying $\Phi(h)=0$, the connection $A(h)$ on $\Lambda^+$ is Yang--Mills and its self-dual curvature, regarded as an endomorphism of $\Lambda^+$, is precisely $h$ itself.
Conversely, let $A$ be a smooth $\operatorname{SO}(3)$ Yang--Mills connection on an oriented Euclidean bundle $\mathcal{E}\rightarrow M$ of rank three. Suppose that the curvature homomorphism $F_A^+\colon\mathcal{E}\rightarrow\Lambda^+$ is an orientation-preserving isomorphism at every point of $M$. Then there is a unique orientation-preserving bundle isometry $R\colon\mathcal{E}\rightarrow\Lambda^+$ such that the self-dual curvature of $R_*A$, regarded as an endomorphism of $\Lambda^+$, is a positive self-adjoint endomorphism $h$. Moreover, it holds that $R_*A=A(h)$ and $\Phi(h)=0$. If $\mathcal{E}=\Lambda^+$, then $R$ is a gauge transformation.
\end{theorem}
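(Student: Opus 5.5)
The plan is to prove the two directions separately, using the compatibility characterization of Lemma~\ref{Compatible_connection} as the bridge between the frame equation and the Yang--Mills equation.

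\emph{Forward direction.} Let $h$ satisfy $\Phi(h)=0$ and write $A:=A(h)$. Then $F_A^+h^{-1}=\operatorname{Id}$, so $F_A^+=h$ as endomorphisms. Equivalently, $F_A^+=B_h$ as $\Lambda^+$-valued self-dual $2$-forms. By construction $d_AB_h=0$, hence $d_AF_A^+=0$. The Bianchi identity $d_AF_A=0$ gives $d_AF_A^-=-d_AF_A^+=0$. Since $\star F_A=F_A^+-F_A^-$, we get
$$
d_A\star F_A=d_AF_A^+-d_AF_A^-=2d_AF_A^+=0,
$$
which is the Yang--Mills equation.

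\emph{Converse direction.} Let $A$ be Yang--Mills on $\mathcal{E}$ with $T:=F_A^+\colon\mathcal{E}\to\Lambda^+$ an orientation-preserving isomorphism at each point.

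First, construct $R$ by pointwise polar decomposition. Write $T=PU$, where $P=(TT^T)^{1/2}$ is a positive self-adjoint endomorphism of $\Lambda^+$ and $U=P^{-1}T\colon\mathcal{E}\to\Lambda^+$ is an isometry. Since $\det P>0$ and $T$ preserves orientation, $U$ preserves orientation. Set $R:=U$. Then $F_{R_*A}^+=TR^{-1}=P$, which is positive self-adjoint. The map $TT^T$ is smooth and positive, and the positive square root is smooth on positive matrices, so $P$ and $R$ are smooth.

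Next, uniqueness. If $R'$ is another orientation-preserving isometry with $TR'^{-1}=P'>0$, then $T=P'R'$ is a second polar decomposition. This forces $P'^2=TT^T=P^2$, so $P'=P$ by uniqueness of positive square roots, and hence $R'=R$.

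Then set $h:=P$. The Yang--Mills equation together with Bianchi gives $d_AF_A^+=0$, as in the forward computation run backwards. Transport commutes with $d_A$, since $R$ acts only on the coefficient factor and $d_{R_*A}(R\alpha)=R\,d_A\alpha$. It follows that $d_{R_*A}F_{R_*A}^+=0$.

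The identification $F_{R_*A}^+=B_h$ as forms needs the convention check. Equation~(\ref{curvature_homomorphism}) expresses the homomorphism $h$ via matrix entries $T_{ji}$, with row index on the form and column index on the coefficient. By (\ref{form_from_endomorphism}), $B_h=\sum h_{ij}\Sigma_i\otimes\Sigma_j$ corresponds under this convention to exactly the endomorphism $h$, consistent with the paper's stated equivalence "$F_A^+=h$ iff $F_A^+=B_h$." So $d_{R_*A}B_h=0$, and Lemma~\ref{Compatible_connection} gives $R_*A=A(h)$. Consequently $F_{A(h)}^+=h$, so $\Phi(h)=0$. When $\mathcal{E}=\Lambda^+$, $R$ is an orientation-preserving isometric automorphism, that is, a gauge transformation.

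The main obstacle I expect is bookkeeping rather than substance. One point is matching the homomorphism convention (\ref{curvature_homomorphism}) with $B_h$, which needs care because $h$ is symmetric only after the transport. The other is the transport formula for $d_A$, together with verifying that $F^+_{R_*A}=F_A^+R^{-1}$. The latter is already stated in the text, so I would only confirm that $R$ intertwines $d_A$ and $d_{R_*A}$ through the local frame formula (\ref{exterior_derivative}).
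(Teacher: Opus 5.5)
Your proposal is correct and follows essentially the same route as the paper: polar decomposition with $h=(FF^*)^{1/2}$ and $R=h^{-1}F$, uniqueness via uniqueness of the positive square root, and Lemma~\ref{Compatible_connection} to identify $R_*A=A(h)$, with the forward direction done by the same Bianchi-identity computation. The only cosmetic difference is that you derive $d_AF_A^+=0$ on $\mathcal{E}$ and then transport it through $R$, whereas the paper first observes that $R_*A$ is Yang--Mills and applies the Bianchi identity to $R_*A$ directly.
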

\begin{proof}[Proof of Theorem~\ref{Yang_Mills_equivalence}]
Assume first that $\Phi(h)=0$, and write $A:=A(h)$. The definition of $\Phi$ gives $F_A^+=h$ as endomorphisms of $\Lambda^+$. By the correspondence in equation~(\ref{form_from_endomorphism}), the endomorphism identity $F_A^+=h$ is equivalent to the identity $F_A^+=B_h$ in $\Omega^2(\Lambda^+)$. Lemma~\ref{Compatible_connection} gives $d_AB_h=0$. The Bianchi identity $d_AF_A=0$ therefore gives $d_AF_A^-=0$. Consequently, we obtain
$$
d_A\star F_A=d_A(B_h-F_A^-)=0.
$$
Thus, the connection $A$ is Yang--Mills.

Conversely, write $F:=F_A^+$ for the curvature homomorphism from $\mathcal{E}$ to $\Lambda^+$. Let $F^*\colon\Lambda^+\rightarrow\mathcal{E}$ be the adjoint of $F$ with respect to the fixed Euclidean metrics on $\mathcal{E}$ and $\Lambda^+$, and put
$$
\begin{cases}
h:=(FF^*)^{1/2},\\
R:=h^{-1}F.
\end{cases}
$$
The endomorphism $FF^*$ of $\Lambda^+$ is positive and self-adjoint, so $h$ is the unique positive self-adjoint square root of $FF^*$. The map $R\colon\mathcal{E}\rightarrow\Lambda^+$ satisfies $RR^*=\operatorname{Id}_{\Lambda^+}$, where $R^*\colon\Lambda^+\rightarrow\mathcal{E}$ is its adjoint for the fixed Euclidean metrics, and is therefore a bundle isometry. Since $F$ is orientation-preserving and $h$ is positive, the isometry $R$ is orientation-preserving. The endomorphism field $h$ and the isometry $R$ are smooth and globally defined because the positive square root depends smoothly on a positive self-adjoint endomorphism of $\Lambda^+$.

The self-dual curvature homomorphism of the transported connection $R_*A$ is $FR^{-1}=h$. Equivalently, the self-dual curvature as a $\Lambda^+$-valued $2$-form is $F_{R_*A}^+=B_h$. The connection $R_*A$ is Yang--Mills because $R$ is a bundle isometry. The Bianchi identity and the Yang--Mills equation for $R_*A$ now give
$$
d_{R_*A}B_h=\frac{1}{2}d_{R_*A}(F_{R_*A}+\star F_{R_*A})=0.
$$
Lemma~\ref{Compatible_connection} therefore gives $R_*A=A(h)$. Since the self-dual curvature homomorphism of $R_*A$ is $h$, we conclude that $\Phi(h)=0$.

To prove uniqueness, write $R_1:=R$ for the isometry constructed above, and let $R_2\colon\mathcal{E}\rightarrow\Lambda^+$ be another orientation-preserving bundle isometry such that the endomorphism $h_2:=FR_2^{-1}$ of $\Lambda^+$ is positive and self-adjoint. The equality $F=h_2R_2$ and the isometry identity $R_2^*=R_2^{-1}$ give $FF^*=h_2^2$. Uniqueness of the positive self-adjoint square root of $FF^*$ implies $h_2=h$. It follows that $R_2=h^{-1}F=R_1$.
\end{proof}

The orientation hypothesis in the converse direction of Theorem~\ref{Yang_Mills_equivalence} is essential. Composition with an orientation-preserving bundle isometry cannot change the sign of the determinant of $F_A^+$. Thus, an orientation-reversing curvature homomorphism cannot become a positive self-adjoint endomorphism of $\Lambda^+$ under an $\operatorname{SO}(3)$ gauge transformation. Yang--Mills connections with orientation-reversing self-dual curvature homomorphisms therefore do not correspond to solutions of $\Phi(h)=0$ in the positive cone $\Omega^0_+(\operatorname{Sym}^2(\Lambda^+))$. When the coefficient bundle has been identified with $\Lambda^+$, symmetry of the curvature matrix means self-adjointness of the resulting endomorphism $F_A^+\colon\Lambda^+\rightarrow\Lambda^+$. A gauge transformation acts on the coefficient bundle while leaving the ordinary differential-form factor fixed, and therefore need not preserve this self-adjointness condition.

We next examine the irreducibility and stabilizer of $A(h)$ for a solution $h$ of $\Phi(h)=0$. The stabilizer of a connection $A(h)$ in the $\operatorname{SO}(3)$ gauge group is the group of gauge transformations $R$ of $\Lambda^+$ satisfying $R_*A(h)=A(h)$.

\begin{corollary}\label{Irreducibility}
Let $h\in\Omega^0_+(\operatorname{Sym}^2(\Lambda^+))$ be a classical solution to $\Phi(h)=0$. Then the $\operatorname{SO}(3)$ connection $A(h)$ on $\Lambda^+$ is irreducible. More precisely, if a smooth section $f\in\Omega^0(\Lambda^+)$ satisfies $d_{A(h)}f=0$, then $f=0$. Moreover, the stabilizer of $A(h)$ in the $\operatorname{SO}(3)$ gauge group is trivial.
\end{corollary}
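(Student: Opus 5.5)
Write $A:=A(h)$. By Theorem~\ref{Yang_Mills_equivalence}, $F_A^+=B_h$ as $\Lambda^+$-valued $2$-forms, and $F_A^+=h$ as endomorphisms. Suppose $f\in\Omega^0(\Lambda^+)$ satisfies $d_Af=0$. The curvature identity gives
$$
0=d_A^2f=[F_A,f].
$$
Taking the self-dual part, which commutes with the bracket with a $0$-form because the Hodge star acts only on the form factor, we get $[F_A^+,f]=[B_h,f]=0$. In a local oriented orthonormal frame $(\Sigma_1,\Sigma_2,\Sigma_3)$ we have $B_h=\sum_i h(\Sigma_i)\otimes\Sigma_i$. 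Hence
$$
[B_h,f]=\sum_i h(\Sigma_i)\otimes[\Sigma_i,f].
$$
Since the ordinary forms $h(\Sigma_i)$ are linearly independent ($h$ is invertible), each coefficient must vanish, so $[\Sigma_i,f]=0$ for all $i$. A vector in the three-dimensional cross-product algebra that commutes with a basis is zero, so $f=0$ pointwise. This gives the infinitesimal statement, and irreducibility follows: a reducible $\operatorname{SO}(3)$ connection preserves a splitting and hence admits a non-zero parallel section of $\mathfrak{so}(\Lambda^+)\cong\Lambda^+$.

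For the stabilizer, let $R$ be a gauge transformation of $\Lambda^+$ with $R_*A=A$. I will use the transport rule $F^+_{R_*A}=F_A^+R^{-1}$ stated before Theorem~\ref{Yang_Mills_equivalence}. Combined with $R_*A=A$, it gives $h=hR^{-1}$ as endomorphisms. Since $h$ is invertible, $R=\operatorname{Id}$ at every point, so the stabilizer is trivial.

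The step needing the most care is this transport formula, specifically the convention that $R$ acts on the coefficient factor only, so that $F^+_{R_*A}$ as a homomorphism is $F^+_AR^{-1}$ and not a conjugate $RF^+_AR^{-1}$. The paper fixes precisely this convention, so I will invoke it. For a cross-check, a stabilizing $R$ is $A$-parallel as a section of $\operatorname{End}(\Lambda^+)$, so it commutes with curvature in the sense $F_A\circ R=R\circ F_A$ on the coefficient bundle. Combined with the coefficient-frame description, this again forces the coefficient action to be trivial. As a further consistency check, uniqueness of the polar decomposition in Theorem~\ref{Yang_Mills_equivalence}, applied with $\mathcal{E}=\Lambda^+$ and the two isometries $\operatorname{Id}$ and $R$, yields $R=\operatorname{Id}$ directly.
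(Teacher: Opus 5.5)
Your proof is correct and is essentially the paper's argument: the Ricci identity together with the linear independence of the forms $h(\Sigma_i)$ (the paper phrases this as the coefficient vectors of $(B_h)_x$ spanning $\Lambda_x^+$) forces $f=0$, and the coefficient-only transport rule $F^+_{R_*A}=F^+_AR^{-1}$ gives $h=hR^{-1}$, hence $R=\operatorname{Id}$. One side remark is inaccurate, but the corollary does not need it, since the statement itself takes irreducibility to mean that $d_{A(h)}f=0$ forces $f=0$: an invariant splitting $L\oplus L^\perp$ need not give a non-zero parallel section of $\Lambda^+$ when $L$ is non-orientable, although it always gives the non-trivial parallel gauge transformation $\operatorname{Id}_L\oplus(-\operatorname{Id}_{L^\perp})$, so your trivial-stabilizer result rules out such splittings directly.
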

\begin{proof}[Proof of Corollary~\ref{Irreducibility}]
Write $A:=A(h)$. If $d_Af=0$, then the Ricci identity gives
$$
[F_A,f]=d_A^2f=0.
$$
Taking the self-dual part in the ordinary $2$-form factor gives $[F_A^+,f]=0$. Since $F_A^+=B_h$, we obtain $[B_h,f]=0$. Fix a point $x\in M$. Since $h_x$ is invertible, the coefficient vectors of $(B_h)_x$, obtained by contracting its ordinary $2$-form factor against elements of $\Lambda_x^+$, span the coefficient space $\Lambda_x^+$. Hence, $[v,f_x]=0$ for every $v\in\Lambda_x^+$. The center of $\mathfrak{so}(3)$ is trivial, so the cross-product identification of $\Lambda_x^+$ with $\mathfrak{so}(\Lambda_x^+)$ gives $f_x=0$. Since $x$ was arbitrary, we conclude that $f=0$.

Now suppose that a gauge transformation $R$ satisfies $R_*A=A$. The equality $R_*A=A$ implies that $R$ is parallel for the connection induced by $A$ on $\operatorname{End}(\Lambda^+)$. The transformation rule for curvature also implies that the action of $R$ on the coefficient bundle fixes $F_A^+=B_h$. At each point $x\in M$, the coefficient vectors of $(B_h)_x$ span $\Lambda_x^+$, so $R_x$ fixes every vector of $\Lambda_x^+$. Thus, we obtain $R_x=\operatorname{Id}$ for every $x\in M$, and $R$ is the identity gauge transformation.
\end{proof}

\begin{remark}
The Yang--Mills moduli space considered here is the set of smooth Yang--Mills connections on $\Lambda^+$ modulo $\operatorname{SO}(3)$ gauge transformations, with the quotient topology induced by the $C^\infty$ topology on smooth connections. Theorem~\ref{Yang_Mills_equivalence} identifies the set $\Phi_g^{-1}(0)$ of smooth positive self-adjoint endomorphism fields $h$ of $\Lambda^+$ satisfying $\Phi_g(h)=0$ with the open subset of the Yang--Mills moduli space represented by connections whose self-dual curvature homomorphisms are orientation-preserving isomorphisms. Indeed, the construction $h=(F_A^+(F_A^+)^*)^{1/2}$ is unchanged by a gauge transformation of $A$, and Theorem~\ref{Yang_Mills_equivalence} gives a unique representative $A(h)$ of each such gauge class whose self-dual curvature is the positive self-adjoint endomorphism $h$ of $\Lambda^+$. Corollary~\ref{Irreducibility} shows that the stabilizer of every connection in this subset is trivial.
\end{remark}

\subsection{Linearization and ellipticity}

We now compute the derivative of $\Phi$ with respect to the positive self-adjoint endomorphism field $h$ of $\Lambda^+$. Both the variation of $h$ and the value of $\Phi(h)$ are sections of the rank-six bundle $\operatorname{Sym}^2(\Lambda^+)$. We shall prove that the principal symbol of $D_h\Phi$ is invertible for every non-zero cotangent vector. Thus, the equation $\Phi(h)=0$ is an elliptic system with the same number of equations and unknowns, and requires no additional gauge-fixing equation.

\begin{lemma}\label{Linearisation_formula}
Let $h(t)$, for $t$ in an open interval containing $0$, be a smooth variation in $\Omega^0_+(\operatorname{Sym}^2(\Lambda^+))$ with $h(0)=h$. Write $\eta:=\dot h(0)\in\Omega^0(\operatorname{Sym}^2(\Lambda^+))$. Then it holds that
$$
\left.\frac{d}{dt}\right|_{t=0}A(h(t)) =-\operatorname{ad}_h^{-1}(d_{A(h)}B_\eta).
$$
Moreover, the Fr\'echet derivative of $\Phi$ at $h$, applied to the variation $\eta$, is
\begin{equation}\label{Linearisation_Phi}
D_h\Phi(\eta) =-(d_{A(h)}\operatorname{ad}_h^{-1}(d_{A(h)}B_\eta))^+h^{-1} -F_{A(h)}^+h^{-1}\eta h^{-1}.
\end{equation}
Furthermore, if $h$ is a solution to $\Phi(h)=0$, then equation~(\ref{Linearisation_Phi}) reduces to
$$
D_h\Phi(\eta) =-(d_{A(h)}\operatorname{ad}_h^{-1}(d_{A(h)}B_\eta))^+h^{-1}-\eta h^{-1}.
$$
\end{lemma}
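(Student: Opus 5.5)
The plan is to differentiate the compatibility identity $d_{A(h(t))}B_{h(t)}=0$, which holds for every $t$ by Lemma~\ref{Compatible_connection}. First, by Lemma~\ref{Sobolev_smoothness} (or directly from the smooth formula $A(h)=\Gamma-\operatorname{ad}_h^{-1}(d_\Gamma B_h)$), the family $A(t):=A(h(t))$ depends smoothly on $t$. So $a:=\dot A(0)\in\Omega^1(\Lambda^+)$ is well defined as a difference of connections under the cross-product identification. Since $B_h$ is linear in $h$, we have $\frac{d}{dt}|_{t=0}B_{h(t)}=B_\eta$. For any connection $A'$ and any $\Lambda^+$-valued form $\beta$, the identity $d_{A+a'}\beta=d_A\beta+[a',\beta]$ lets us differentiate the product. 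This gives
$$
0=\left.\frac{d}{dt}\right|_{t=0}d_{A(t)}B_{h(t)}=[a,B_h]+d_{A(h)}B_\eta=\operatorname{ad}_h(a)+d_{A(h)}B_\eta .
$$
Lemma~\ref{Adjoint_action} says $\operatorname{ad}_h$ is pointwise invertible, so $a=-\operatorname{ad}_h^{-1}(d_{A(h)}B_\eta)$. This is the first formula.

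Next we differentiate $\Phi(h(t))=F^+_{A(t)}h(t)^{-1}-\operatorname{Id}$. The curvature variation formula, already used in the proof of Proposition~\ref{E-L}, gives $\frac{d}{dt}|_{t=0}F_{A(t)}=d_{A}a$. The Hodge star acts only on the form factor and does not depend on $t$, so taking self-dual parts commutes with $d/dt$, and $\frac{d}{dt}F^+_{A(t)}=(d_Aa)^+$. Here we regard $(d_Aa)^+$ as an endomorphism of $\Lambda^+$ via equation~(\ref{curvature_homomorphism}); this identification is linear and $t$-independent, so it also commutes with differentiation. The inverse has derivative $\frac{d}{dt}h(t)^{-1}=-h^{-1}\eta h^{-1}$. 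The product rule for composition of endomorphism fields then gives
$$
D_h\Phi(\eta)=(d_Aa)^+h^{-1}-F_A^+h^{-1}\eta h^{-1}.
$$
Substituting the expression for $a$ yields equation~(\ref{Linearisation_Phi}).

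For the Fréchet derivative, as opposed to the derivative along curves, we apply this computation to the straight line $h+t\eta$. Lemma~\ref{Sobolev_smoothness} says $\Phi$ is smooth on the open set $W^{s,p}_+$, so its Fréchet derivative agrees with the directional derivative along that line. Finally, at a solution of $\Phi(h)=0$ we have $F^+_{A(h)}=h$ as endomorphisms, so $F_A^+h^{-1}=\operatorname{Id}$ and the last term becomes $-\eta h^{-1}$.

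The main obstacle is bookkeeping of conventions rather than analysis. We must check that the variation of connections, the bracket $[a,B_h]$ and the identity $d_{A+a}=d_A+[a,\cdot]$ all use the same cross-product identification and sign as the definition of $\operatorname{ad}_h$. We must also check that the endomorphism identification of $(d_Aa)^+$ composes with $h^{-1}$ in the correct order. Once these are consistent with the local formula~(\ref{exterior_derivative}), the argument is a direct differentiation.
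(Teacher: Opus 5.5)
Your proposal is correct and follows the paper's proof essentially step for step. You differentiate $d_{A(h(t))}B_{h(t)}=0$ using the linearity of $B$ in $h$, invert $\operatorname{ad}_h$ by Lemma~\ref{Adjoint_action}, and then differentiate $F^+_{A(t)}h(t)^{-1}$ using the curvature variation formula, the $t$-independence of $\star$, and $\frac{d}{dt}h(t)^{-1}=-h^{-1}\eta h^{-1}$. Your added remark that Lemma~\ref{Sobolev_smoothness} identifies the Fr\'echet derivative with the derivative along the curve is a point the paper leaves implicit.
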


For the variation $h(t)$ in Lemma~\ref{Linearisation_formula}, the form $B_\eta$ is the derivative of the family of $\Lambda^+$-valued $2$-forms $B_{h(t)}$ at $t=0$:
$$
B_\eta=\left.\frac{d}{dt}\right|_{t=0}B_{h(t)}.
$$
Indeed, differentiating each coefficient $h(t)(\Sigma_i)$ of $B_{h(t)}$ in a fixed local oriented orthonormal frame $(\Sigma_1,\Sigma_2,\Sigma_3)$ of $\Lambda^+$ gives the coefficient $\eta(\Sigma_i)$ of $B_\eta$.

In equation~(\ref{Linearisation_Phi}), the exterior derivative $d_{A(h)}B_\eta$ belongs to $\Omega^3(\Lambda^+)$. Applying $\operatorname{ad}_h^{-1}$ gives an element of $\Omega^1(\Lambda^+)$, and applying $d_{A(h)}$ again gives an element of $\Omega^2(\Lambda^+)$. The superscript $+$ projects the ordinary $2$-form factor to its self-dual part. The resulting self-dual form and the curvature $F_{A(h)}^+$ are then regarded as endomorphisms of $\Lambda^+$, so every product with $h^{-1}$ or $\eta$ in equation~(\ref{Linearisation_Phi}) is a composition of endomorphisms of $\Lambda^+$.

\begin{proof}[Proof of Lemma~\ref{Linearisation_formula}]
Write $A(t):=A(h(t))$ and $A:=A(0)$. The derivative $\dot A(0)$ is a $\Lambda^+$-valued $1$-form, since $A(t)$ is a family of connections on the fixed bundle $\Lambda^+$. Differentiating the identity $d_{A(t)}B_{h(t)}=0$ at $t=0$, and using the linear dependence of $B_{h(t)}$ on $h(t)$, we obtain
$$
d_AB_\eta+[\dot A(0),B_h]=0.
$$
Since $[\dot A(0),B_h]=\operatorname{ad}_h(\dot A(0))$, Lemma~\ref{Adjoint_action} gives
$$
\dot A(0)=-\operatorname{ad}_h^{-1}(d_AB_\eta).
$$
The curvature variation formula is
$$
\left.\frac{d}{dt}\right|_{t=0}F_{A(t)}=d_A\dot A(0).
$$
Differentiating $h(t)h(t)^{-1}=\operatorname{Id}$ gives
$$
\left.\frac{d}{dt}\right|_{t=0}h(t)^{-1}=-h^{-1}\eta h^{-1}.
$$
We now differentiate $\Phi(h(t))=F_{A(t)}^+h(t)^{-1}-\operatorname{Id}$. The Hodge star is independent of $t$, because the metric $g$ is fixed. Substituting the curvature variation and the derivative of $h(t)^{-1}$ therefore gives equation~(\ref{Linearisation_Phi}). If $\Phi(h)=0$, then $F_A^+=h$ as endomorphisms of $\Lambda^+$, so $F_A^+h^{-1}\eta h^{-1}=\eta h^{-1}$. This proves the stated simplification of $D_h\Phi(\eta)$ at a solution $h$ of $\Phi(h)=0$.
\end{proof}

To compute the principal symbol of $D_h\Phi$, we first specify the meaning of $B_\eta$ when $\eta$ is an endomorphism of a single fibre of $\Lambda^+$. For a point $x\in M$ and a self-adjoint endomorphism $\eta\colon\Lambda_x^+\rightarrow\Lambda_x^+$, choose an oriented orthonormal basis $(\Sigma_1,\Sigma_2,\Sigma_3)$ of $\Lambda_x^+$ and define
$$
B_\eta:=\sum_{i=1}^3\eta(\Sigma_i)\otimes\Sigma_i\in\Lambda_x^+\otimes\Lambda_x^+.
$$
In this definition of $B_\eta$, the first factor $\eta(\Sigma_i)$ is an ordinary self-dual $2$-covector at $x$, and the second factor $\Sigma_i$ lies in the coefficient fibre $\Lambda_x^+$. The tensor $B_\eta$ is independent of the chosen oriented orthonormal basis of $\Lambda_x^+$. If a smooth self-adjoint endomorphism field $h\in\Omega^0(\operatorname{Sym}^2(\Lambda^+))$ satisfies $h_x=\eta$, then $(B_h)_x=B_\eta$.

For the principal-symbol calculation in the next Proposition~\ref{Principal_symbol}, we write $\eta=\eta_0+\eta_1$ for the decomposition of a self-adjoint endomorphism $\eta$ of $\Lambda_x^+$ into the trace-free part $\eta_0$ and the scalar part $\eta_1$, characterized by $\operatorname{tr}(\eta_0)=0$ and $3\eta_1=\operatorname{tr}(\eta)\operatorname{Id}$. For a cotangent vector $\xi\in T_x^*M$, the notation $|\xi|$ denotes the norm induced by $g$.

\begin{proposition}\label{Principal_symbol}
For any $h\in\Omega^0_+(\operatorname{Sym}^2(\Lambda^+))$, any point $x\in M$ and any non-zero cotangent vector $\xi\in T_x^*M$, the principal symbol of $D_h\Phi$ is the linear map
$$
\sigma_\xi(D_h\Phi)\colon\operatorname{Sym}^2(\Lambda_x^+)\rightarrow\operatorname{Sym}^2(\Lambda_x^+)
$$
given by
$$
\sigma_\xi(D_h\Phi)(\eta) =(\xi\wedge\operatorname{ad}_{h_x}^{-1}(\xi\wedge B_\eta))^+h_x^{-1}
$$
for every self-adjoint endomorphism $\eta\in\operatorname{Sym}^2(\Lambda_x^+)$ of the fibre $\Lambda_x^+$.
The symbol $\sigma_\xi(D_h\Phi)$ is an automorphism.
Moreover, at the identity endomorphism field $h=\operatorname{Id}$, it holds that
$$
\sigma_\xi(D_{\operatorname{Id}}\Phi)(\eta) =\frac{|\xi|^2(\eta_1-2\eta_0)}{2\sqrt{2}}.
$$
\end{proposition}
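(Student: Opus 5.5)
The plan is to read off the principal symbol from Lemma~\ref{Linearisation_formula}, normalize $\xi$ and $h_x$ using the symmetries of the problem, and prove invertibility by an explicit computation in a block decomposition.

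\emph{The symbol formula.} In equation~(\ref{Linearisation_Phi}) only the first term is of second order in $\eta$. The term $F_{A(h)}^+h^{-1}\eta h^{-1}$ is of order zero, and $\operatorname{ad}_h^{-1}$ and the right factor $h^{-1}$ are algebraic. In $d_{A(h)}$, the connection part $[A(h)-\Gamma,\cdot\,]$ and the Levi--Civita terms are of order zero. Each $d_{A(h)}$ therefore contributes $i\xi\wedge(\cdot)$ to the symbol. The two factors of $i$ give $i^2=-1$, which cancels the leading minus sign. This yields
\[
\sigma_\xi(D_h\Phi)(\eta)=(\xi\wedge\operatorname{ad}_{h_x}^{-1}(\xi\wedge B_\eta))^+h_x^{-1},
\]
where $\operatorname{ad}_{h_x}^{-1}$ is the pointwise inverse from Lemma~\ref{Adjoint_action}. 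Self-adjointness of the output is not automatic from this formula. I would obtain it by passing to the symbol in the identity $D_h\Phi(\eta)\in\operatorname{Sym}^2(\Lambda^+)$, which holds because $\Phi$ takes values in $\operatorname{Sym}^2(\Lambda^+)$ by Lemma~\ref{Self-Adjoint}. So the map is an endomorphism of the six-dimensional space $\operatorname{Sym}^2(\Lambda_x^+)$.

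\emph{Reductions.} The symbol is homogeneous of degree two in $\xi$, so I may assume $|\xi|=1$. Rotations of $T_x^*M$ preserving $\xi$ act on $\Lambda_x^+$ through all of $\operatorname{SO}(3)$, and every construction involved ($B_\eta$, the bracket, $\operatorname{ad}_h$, the projection $^+$) is $\operatorname{SO}(4)$-equivariant. I would therefore choose an oriented orthonormal coframe with $\theta_0=\xi$ whose induced $\Sigma_i$ from equation~(\ref{self_dual_basis}) diagonalize $h_x=\operatorname{diag}(\lambda_1,\lambda_2,\lambda_3)$. Then I would use the endomorphisms $J_i$ from the proof of Lemma~\ref{Adjoint_action}. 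One has $\theta_0\wedge\Sigma_i=-\theta_0\wedge\theta_j\wedge\theta_k/\sqrt2$, and the $\theta_0\wedge\theta_j\wedge\theta_k$ are Hodge duals of $\theta_i$ up to sign. So $\xi\wedge B_\eta$ is a $3$-form whose coefficients are explicit combinations of $\eta_{ij}$ times $\star\theta_l$.

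\emph{Explicit inversion.} Next I would solve $[a,B_h]=\xi\wedge B_\eta$ for $a=\sum a_i\otimes\Sigma_i$. This is the same triangular elimination as in the proof of Lemma~\ref{Adjoint_action}, now with an inhomogeneous right-hand side. It expresses the $a_i$ as combinations of $\theta_0,\dots,\theta_3$ with coefficients rational in the $\lambda$'s and linear in $\eta$. Then I compute $(\theta_0\wedge a)^+$, read it as a $3\times3$ matrix, and multiply on the right by $h^{-1}$.

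I expect the residual symmetry (sign changes of pairs of $\theta_1,\theta_2,\theta_3$, which preserve $\theta_0$ and the diagonal form of $h$) to force a block decomposition. The diagonal entries $(\eta_{11},\eta_{22},\eta_{33})$ should form an invariant $3\times3$ block. Each off-diagonal entry $\eta_{jk}$ should map to a multiple of itself, with a factor that is an explicit rational function of the $\lambda$'s.

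Invertibility then reduces to two checks: each off-diagonal factor is nonzero, and the $3\times3$ diagonal block has nonzero determinant for all $\lambda_i>0$. I expect both to follow from positivity, for instance because the relevant expressions have the form $-(\lambda_i+\lambda_j)/(\cdots)$ or because the determinant is a positive polynomial divided by $\lambda_1\lambda_2\lambda_3$.

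\emph{The case $h=\operatorname{Id}$.} Setting $\lambda_i=1$ in these formulas should give the stated result. By $\operatorname{SO}(3)$-equivariance the symbol acts by a scalar on trace-free and on pure-trace endomorphisms, so it suffices to evaluate two test cases: $\eta=\operatorname{Id}$, which should give $|\xi|^2/(2\sqrt2)$ times $\operatorname{Id}$, and $\eta=\operatorname{diag}(1,-1,0)$, which should give $-2|\xi|^2/(2\sqrt2)$ times $\eta$.

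The main obstacle is the bookkeeping in the explicit inversion of $\operatorname{ad}_{h_x}$ and the subsequent projection onto self-dual forms: signs from the $J_i$ and from $\star$ must be tracked carefully, and a sign error would destroy the determinant check. As a consistency check I would use the $h=\operatorname{Id}$ formula, where the answer is known from the statement.
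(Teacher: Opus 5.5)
Your derivation of the symbol formula agrees with the paper's. Your proof of invertibility, however, takes a genuinely different route, and it does go through.

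The paper never inverts $\operatorname{ad}_{h_x}$. Assuming the symbol kills $\eta$, it observes that each $\xi\wedge a_i$ is a decomposable anti-self-dual $2$-form, hence zero because $\alpha\wedge\alpha=-|\alpha|^2d\mu_g$ on $\Lambda^-$. Thus $a=\xi\otimes f$. Injectivity of $\xi\wedge$ on $\Lambda_x^+$ then gives $B_\eta=[f,B_h]$, i.e.\ $\eta=-hS_f$. Self-adjointness of $\eta$ forces $hS_f+S_fh=0$, hence $f=0$. That argument treats all positive $h$ at once, with no adapted frame and no sign bookkeeping; only $h=\operatorname{Id}$ is computed explicitly.

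Your explicit plan works as you predict. Take the frame with $\theta_0=\xi$ and $h_x=\operatorname{diag}(\lambda_1,\lambda_2,\lambda_3)$, and write $a_i=\sum_{p=0}^3c_{pi}\theta_p$ and $C=(c_{pi})_{1\leq p,i\leq3}$. The $\theta_1\wedge\theta_2\wedge\theta_3$-components of $[a,B_h]=\xi\wedge B_\eta$ force $hC$ to be symmetric, and the symbol equals $-Ch_x^{-1}/\sqrt2$. The pair-sign-change group $\mathbb{Z}_2\times\mathbb{Z}_2$ acts on $\eta_{12},\eta_{13},\eta_{23}$ through three distinct non-trivial characters, which justifies your block decomposition. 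For $|\xi|=1$, in the eigenbasis, one finds
\[
\sigma_\xi(D_h\Phi)(\eta)_{jk}=-\frac{(\lambda_j+\lambda_k)\eta_{jk}}{2\sqrt2\lambda_1\lambda_2\lambda_3},\qquad
\sigma_\xi(D_h\Phi)(\eta)_{ii}=\frac{\lambda_j\eta_{jj}+\lambda_k\eta_{kk}-\lambda_i\eta_{ii}}{2\sqrt2\lambda_1\lambda_2\lambda_3},
\]
where $j\neq k$ in the first formula and $\{i,j,k\}=\{1,2,3\}$ in the second. So the diagonal block is a positive multiple of $(J-2I)\operatorname{diag}(\lambda_1,\lambda_2,\lambda_3)$, with $J$ the all-ones $3\times3$ matrix, $I$ the identity matrix, and $\det(J-2I)=4$. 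At $\lambda_1=\lambda_2=\lambda_3=1$ this reproduces $|\xi|^2(\eta_1-2\eta_0)/(2\sqrt2)$, consistent with your Schur-lemma shortcut via the two test vectors.

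Your route gives a closed-form symbol for every $h$. This yields explicit bounds on $\sigma_\xi(D_h\Phi)^{-1}$ in terms of the eigenvalues of $h$, which is the uniform invertibility invoked in Corollary~\ref{Elliptic_regularity}. The paper's route is shorter and rests on a conceptual mechanism, the positivity $\lambda_i+\lambda_j>0$ already used in Proposition~\ref{E-L}. It is also immune to the sign errors you rightly identify as the main risk.
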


In the formula for $\sigma_\xi(D_h\Phi)(\eta)$ in Proposition~\ref{Principal_symbol}, exterior multiplication by $\xi$ and projection to the self-dual part act on the ordinary differential-form factor. The self-dual tensor $(\xi\wedge\operatorname{ad}_{h_x}^{-1}(\xi\wedge B_\eta))^+$ is regarded as an endomorphism of $\Lambda_x^+$ before composition with $h_x^{-1}$.

\begin{proof}[Proof of Proposition~\ref{Principal_symbol}]
We use the symbol convention in which each coordinate derivative $\partial_j$ is replaced by $\sqrt{-1}\xi_j$, where $\xi_j$ is the corresponding coordinate component of $\xi$. The terms containing two derivatives of $\eta$ in equation~(\ref{Linearisation_Phi}) give the asserted formula for $\sigma_\xi(D_h\Phi)(\eta)$.

Fix $x\in M$ and a non-zero $\xi\in T_x^*M$. For the remainder of this pointwise calculation, write $h$ for the positive self-adjoint endomorphism $h_x$ of $\Lambda_x^+$. Suppose that $\eta\in\operatorname{Sym}^2(\Lambda_x^+)$ satisfies $\sigma_\xi(D_h\Phi)(\eta)=0$, and put
$$
a:=\operatorname{ad}_h^{-1}(\xi\wedge B_\eta)\in T_x^*M\otimes\Lambda_x^+.
$$
Since $h^{-1}$ is invertible, we have $(\xi\wedge a)^+=0$. Choose an orthonormal basis $(e_1,e_2,e_3)$ of the coefficient space $\Lambda_x^+$, and write
$$
a=\sum_{i=1}^3a_i\otimes e_i,
$$
where each $a_i\in T_x^*M$ is an ordinary covector. Each coefficient $2$-form $\xi\wedge a_i$ is anti-self-dual and decomposable. A decomposable $2$-form has square zero, whereas an anti-self-dual $2$-form $\alpha\in\Lambda_x^-$ satisfies
$$
\alpha\wedge\alpha=-|\alpha|^2(d\mu_g)_x.
$$
It follows that $\xi\wedge a_i=0$ for every $i\in\{1,2,3\}$, and hence $\xi\wedge a=0$. Since $\xi\neq0$, every $a_i$ is a scalar multiple of $\xi$. Thus, $a=\xi\otimes f$ for a vector $f\in\Lambda_x^+$. Applying $\operatorname{ad}_h$ to the equality $a=\xi\otimes f$ gives
$$
\xi\wedge(B_\eta-[f,B_h])=0.
$$
The tensor $B_\eta-[f,B_h]$ belongs to $\Lambda_x^+\otimes\Lambda_x^+$. Exterior multiplication by a non-zero covector $\xi$ is injective on the ordinary self-dual factor $\Lambda_x^+$. Indeed, if $\alpha\in\Lambda_x^+$ satisfies $\xi\wedge\alpha=0$, then $\alpha$ is divisible by $\xi$ and has square zero, while self-duality gives $\alpha\wedge\alpha=|\alpha|^2(d\mu_g)_x$. Applying this injectivity to each coefficient of $B_\eta-[f,B_h]$, we obtain $B_\eta=[f,B_h]$.

Let $S_f\colon\Lambda_x^+\rightarrow\Lambda_x^+$ be the skew-adjoint endomorphism defined by $S_f(v)=[f,v]$ for $v\in\Lambda_x^+$. Under the correspondence in equation~(\ref{form_from_endomorphism}), the tensor $[f,B_h]$ represents the endomorphism $hS_f^*=-hS_f$, where $S_f^*$ is the adjoint of $S_f$ for the fibre metric on $\Lambda_x^+$ induced by $g$. Thus, $\eta=-hS_f$. Since $\eta$ is self-adjoint, taking adjoints gives $\eta=S_fh$, and hence
$$
hS_f+S_fh=0.
$$
Choose an orthonormal eigenbasis $(e_1,e_2,e_3)$ of $\Lambda_x^+$ for $h$, and let $\lambda_i>0$ be the eigenvalue defined by $h(e_i)=\lambda_i e_i$. Denote by $(S_f)_{ij}:=\langle S_f(e_j),e_i\rangle$ the matrix entries of $S_f$ in the eigenbasis $(e_1,e_2,e_3)$ of $\Lambda_x^+$. The identity $hS_f+S_fh=0$ gives
$$
(\lambda_i+\lambda_j)(S_f)_{ij}=0
$$
for every $i,j\in\{1,2,3\}$. Since $\lambda_i+\lambda_j>0$, all matrix entries of $S_f$ vanish. The cross-product identification of $\Lambda_x^+$ with $\mathfrak{so}(\Lambda_x^+)$ is injective, so $S_f=0$ implies $f=0$, and therefore $\eta=0$. Thus, $\sigma_\xi(D_h\Phi)$ is injective. The domain and target of $\sigma_\xi(D_h\Phi)$ are both the six-dimensional vector space $\operatorname{Sym}^2(\Lambda_x^+)$, so $\sigma_\xi(D_h\Phi)$ is an automorphism.

It remains to compute $\sigma_\xi(D_{\operatorname{Id}}\Phi)$. This principal symbol is homogeneous of degree two in $\xi$, so we first assume that $|\xi|=1$. Choose a positively oriented orthonormal basis $(\theta_0,\theta_1,\theta_2,\theta_3)$ of $T_x^*M$ such that $\theta_0=\xi$. Let $(\Sigma_1,\Sigma_2,\Sigma_3)$ be the corresponding oriented orthonormal basis of $\Lambda_x^+$ defined by equation~(\ref{self_dual_basis}). Write $\eta_{ij}:=\langle\eta(\Sigma_j),\Sigma_i\rangle$, and expand
$$
a:=\operatorname{ad}_{\operatorname{Id}}^{-1}(\xi\wedge B_\eta) =\sum_{i=1}^3a_i\otimes\Sigma_i,
$$
where $a_i\in T_x^*M$ are the ordinary coefficient $1$-forms of $a$. The equation $[a,B_{\operatorname{Id}}]=\xi\wedge B_\eta$ has the coefficient solution
$$
a_i=\sum_{j=1}^3\left(\eta_{ji}-\frac{\operatorname{tr}(\eta)}{2}\delta_{ji}\right)\theta_j
$$
for $i\in\{1,2,3\}$, where $\delta_{ji}$ is the Kronecker symbol. Indeed, substituting these coefficient $1$-forms into $[a,B_{\operatorname{Id}}]=\xi\wedge B_\eta$ verifies the equality, and Lemma~\ref{Adjoint_action} gives uniqueness. The basis convention in equation~(\ref{self_dual_basis}) gives $(\theta_0\wedge\theta_j)^+=-\Sigma_j/\sqrt{2}$. Taking the self-dual part of $\xi\wedge a$ and then regarding $(\xi\wedge a)^+$ as an endomorphism of $\Lambda_x^+$ by equation~(\ref{curvature_homomorphism}) therefore gives
$$
\sigma_\xi(D_{\operatorname{Id}}\Phi)(\eta) =-\frac{1}{\sqrt{2}}\left(\eta-\frac{\operatorname{tr}(\eta)}{2}\operatorname{Id}\right).
$$
Substituting $\eta=\eta_0+\eta_1$ and $\operatorname{tr}(\eta)\operatorname{Id}=3\eta_1$ gives the asserted formula when $|\xi|=1$. Homogeneity of degree two supplies the factor $|\xi|^2$ for an arbitrary non-zero cotangent vector $\xi$.
\end{proof}

\begin{theorem}\label{Elliptic_index}
The second order nonlinear differential operator
$$
\Phi\colon W^{s,p}_+(\operatorname{Sym}^2(\Lambda^+))\rightarrow W^{s-2,p}(\operatorname{Sym}^2(\Lambda^+))
$$
is elliptic. Moreover, for every $h\in\Omega^0_+(\operatorname{Sym}^2(\Lambda^+))$, the linearized operator
$$
D_h\Phi\colon W^{s,p}(\operatorname{Sym}^2(\Lambda^+)) \rightarrow W^{s-2,p}(\operatorname{Sym}^2(\Lambda^+))
$$
is Fredholm of index zero.
\end{theorem}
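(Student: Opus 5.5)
Ellipticity of the nonlinear operator $\Phi$ at a point $h\in W^{s,p}_+(\operatorname{Sym}^2(\Lambda^+))$ means, by definition, invertibility of the principal symbol of its linearization. Proposition~\ref{Principal_symbol} provides exactly this for smooth $h$. For Sobolev $h$ the same pointwise argument applies, because $h$ is continuous and positive by the Sobolev embedding used in Lemma~\ref{Sobolev_smoothness}. The principal part of $D_h\Phi$ involves only $h_x$ through $\operatorname{ad}_{h_x}^{-1}$ and $h_x^{-1}$, and no derivatives of $h$. So the first step is to read off ellipticity directly from Proposition~\ref{Principal_symbol} and Lemma~\ref{Linearisation_formula}. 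In doing so I will note that the derivative formula of Lemma~\ref{Linearisation_formula} extends to $W^{s,p}$ coefficients, by the smoothness statement of Lemma~\ref{Sobolev_smoothness}.

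For the Fredholm statement with smooth $h$, I would write $D_h\Phi=P$, a second order linear differential operator on $\operatorname{Sym}^2(\Lambda^+)$ with smooth coefficients, since $A(h)$, $\operatorname{ad}_h^{-1}$ and $h^{-1}$ are smooth. Its symbol is invertible for all $\xi\neq0$. Standard $L^p$ elliptic theory on a closed manifold (interior Schauder/Calder\'on--Zygmund estimates patched by a partition of unity) gives the estimate
$$
\|\eta\|_{W^{s,p}}\le C\bigl(\|P\eta\|_{W^{s-2,p}}+\|\eta\|_{L^p}\bigr).
$$
Rellich compactness then gives finite-dimensional kernel and closed range. The same applies to the formal adjoint $P^*$, which is elliptic with symbol $\sigma_\xi(P)^*$. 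Elliptic regularity identifies the cokernel with the smooth kernel of $P^*$. So $P$ is Fredholm, and the index is independent of $s,p$.

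To get index zero, I would use a homotopy through elliptic operators, since the index is locally constant along continuous paths of Fredholm operators. The symbol of $D_h\Phi$ at $\xi$ depends continuously on $h_x$. Consider $h_t:=(1-t)\operatorname{Id}+th$, which is positive by convexity as in Lemma~\ref{Sobolev_smoothness}. The path $t\mapsto D_{h_t}\Phi$ is a continuous family of elliptic operators between the fixed spaces, with continuity coming from smoothness of $\Phi$ in Lemma~\ref{Sobolev_smoothness}. So $\operatorname{ind}D_h\Phi=\operatorname{ind}D_{\operatorname{Id}}\Phi$.

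At $h=\operatorname{Id}$, Proposition~\ref{Principal_symbol} gives the symbol $|\xi|^2(\eta_1-2\eta_0)/(2\sqrt2)$. This equals $|\xi|^2$ times a constant self-adjoint bundle automorphism $K$, where $K$ acts as $1/(2\sqrt2)$ on traces and $-1/\sqrt2$ on trace-free parts. Hence $D_{\operatorname{Id}}\Phi$ can be joined through elliptic operators with the same symbol (discarding lower-order terms linearly) to $K\circ\nabla^*\nabla$. That operator is $K$ times a formally self-adjoint Laplacian, so its index is $0$. Lower-order perturbations are compact $W^{s,p}\to W^{s-2,p}$ and do not change the index.

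The main obstacle is ensuring that the symbols along these homotopies stay invertible, since an indefinite $K$ rules out a naive straight-line deformation to $\nabla^*\nabla$. I avoid this by never deforming the principal symbol itself, only $h$ (where Proposition~\ref{Principal_symbol} guarantees invertibility for every positive $h$) and lower-order terms. An alternative for the last step is to note that $K^{-1}D_{\operatorname{Id}}\Phi$ has scalar principal symbol $|\xi|^2$, hence is a Laplace-type operator, whose index is $0$ by comparison with its formal adjoint.
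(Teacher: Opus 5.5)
Your proposal is correct and follows the paper's proof. Both arguments get ellipticity from Proposition~\ref{Principal_symbol}, use the homotopy $h_t=(1-t)\operatorname{Id}+th$ to reduce to $h=\operatorname{Id}$, and then use a lower-order (compact) comparison with the connection Laplacian. The only difference is cosmetic: the paper composes with $T=K^{-1}$ and compares $TD_{\operatorname{Id}}\Phi$ with $\nabla^*\nabla$, while you compare $D_{\operatorname{Id}}\Phi$ with $K\nabla^*\nabla$, which is the same argument up to the invertible bundle map $K$.
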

\begin{proof}[Proof of Theorem~\ref{Elliptic_index}]
Ellipticity of $\Phi$ means that the principal symbol of every linearization $D_h\Phi$ is invertible at every non-zero cotangent vector, as proved in Proposition~\ref{Principal_symbol}. Since $M$ is closed, elliptic theory implies that $D_h\Phi$ has closed image and finite-dimensional kernel and cokernel. Thus, $D_h\Phi$ is Fredholm. Its index is the dimension of its kernel minus the dimension of its cokernel.

For a fixed $h\in\Omega^0_+(\operatorname{Sym}^2(\Lambda^+))$, consider the path
$$
h(t):=(1-t)\operatorname{Id}+th
$$
for $0\leq t\leq1$. The positive cone $\Omega^0_+(\operatorname{Sym}^2(\Lambda^+))$ is convex, so $h(t)$ is a positive self-adjoint endomorphism field of $\Lambda^+$ for every $t\in[0,1]$. Proposition~\ref{Principal_symbol} shows that every operator $D_{h(t)}\Phi$ is elliptic. Since $D_{h(t)}\Phi$ depends continuously on $t$ in the operator norm from $W^{s,p}(\operatorname{Sym}^2(\Lambda^+))$ to $W^{s-2,p}(\operatorname{Sym}^2(\Lambda^+))$, the Fredholm index of $D_{h(t)}\Phi$ is independent of $t$.

To compute the index at $h=\operatorname{Id}$, we let $T$ be the bundle automorphism of $\operatorname{Sym}^2(\Lambda^+)$ defined via
$$
T(\eta):=-\sqrt{2}\eta_0+2\sqrt{2}\eta_1,
$$
where $\eta_0$ and $\eta_1$ here are respectively the trace-free and scalar parts of the symmetric endomorphism $\eta$ of $\Lambda^+$. Proposition~\ref{Principal_symbol} gives
$$
\sigma_\xi(TD_{\operatorname{Id}}\Phi)(\eta)=|\xi|^2\eta.
$$
Let $\nabla$ be the metric connection on $\operatorname{Sym}^2(\Lambda^+)$ induced by the Levi--Civita connection of $g$. Denote by $\nabla^*$ the formal $L^2$ adjoint of $\nabla$ for the volume form $d\mu_g$ and the induced bundle metrics. The connection Laplacian $\nabla^*\nabla$ has the same principal symbol $|\xi|^2\operatorname{Id}$ as $TD_{\operatorname{Id}}\Phi$. In the symbol $|\xi|^2\operatorname{Id}$ at $\xi\in T_x^*M$, the identity acts on the vector space $\operatorname{Sym}^2(\Lambda_x^+)$. Therefore, the difference $TD_{\operatorname{Id}}\Phi-\nabla^*\nabla$ has order at most one and is compact as an operator from $W^{s,p}(\operatorname{Sym}^2(\Lambda^+))$ to $W^{s-2,p}(\operatorname{Sym}^2(\Lambda^+))$. The operator $\nabla^*\nabla$ is formally self-adjoint and has index zero on the closed manifold $M$. Invariance of the Fredholm index under compact perturbations gives index zero for $TD_{\operatorname{Id}}\Phi$. Since $T$ is an invertible bundle map, composition with $T$ does not change the index, and hence $D_{\operatorname{Id}}\Phi$ also has index zero. Constancy along the path $h(t)$ now gives $\operatorname{ind}(D_h\Phi)=0$.
\end{proof}

\begin{remark}
Theorem~\ref{Elliptic_index} concerns the linearized equation $D_h\Phi(\eta)=0$ and does not assert existence or compactness of the solution set of $\Phi(h)=0$. A solution $h$ of $\Phi(h)=0$ is called non-degenerate if the kernel of $D_h\Phi$ on $W^{s,p}(\operatorname{Sym}^2(\Lambda^+))$ is zero. Since $D_h\Phi$ has index zero, the linearized operator at a non-degenerate solution $h$ of $\Phi(h)=0$ is an isomorphism from $W^{s,p}(\operatorname{Sym}^2(\Lambda^+))$ to $W^{s-2,p}(\operatorname{Sym}^2(\Lambda^+))$. The implicit function theorem therefore implies that such an $h$ is isolated among the solutions of $\Phi(h)=0$ in $W^{s,p}_+(\operatorname{Sym}^2(\Lambda^+))$. A global count of solutions of $\Phi(h)=0$ would additionally require properness of $\Phi$ on $W^{s,p}_+(\operatorname{Sym}^2(\Lambda^+))$, namely compactness of the inverse image of every compact subset of $W^{s-2,p}(\operatorname{Sym}^2(\Lambda^+))$.
\end{remark}

We next use the ellipticity of $\Phi$ to establish regularity for weak solutions of $\Phi(h)=0$. A weak solution of $\Phi(h)=0$ is a section $h\in W^{s,p}_+(\operatorname{Sym}^2(\Lambda^+))$ for which the equation holds in the sense of distributions on $M$.

\begin{corollary}\label{Elliptic_regularity}
Every weak solution $h\in W^{s,p}_+(\operatorname{Sym}^2(\Lambda^+))$ of $\Phi(h)=0$ is in fact a smooth section of $\operatorname{Sym}^2(\Lambda^+)$.
\end{corollary}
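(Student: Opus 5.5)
We would prove Corollary~\ref{Elliptic_regularity} by a bootstrap on the Sobolev exponent $s$, using the ellipticity established in Proposition~\ref{Principal_symbol} and Theorem~\ref{Elliptic_index}. The underlying observation is that $\Phi$ is quasilinear. After multiplying by $h$ on the right, the equation $\Phi(h)=0$ becomes $F^+_{A(h)}=h$. By the formula $A(h)=\Gamma-\operatorname{ad}_h^{-1}(d_\Gamma B_h)$, this reads
$$
-(d_\Gamma\operatorname{ad}_h^{-1}(d_\Gamma B_h))^+ + Q(h,\nabla h)=h.
$$
Here $Q$ is a smooth function of $h$ and of $\nabla h$, at most quadratic in $\nabla h$, with coefficients depending smoothly on $h$ through $h^{-1}$ and $\operatorname{ad}_h^{-1}$; it also absorbs $F_\Gamma^+$. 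The second order part is therefore $L_h(h):=-(d_\Gamma\operatorname{ad}_h^{-1}(d_\Gamma B_{\,\cdot\,}))^+$ applied to $h$, in which $\operatorname{ad}_h^{-1}$ is frozen and the outer $d_\Gamma$ does not differentiate it. Terms in which the outer $d_\Gamma$ falls on $\operatorname{ad}_h^{-1}$ contribute products of first derivatives, which also go into $Q$.

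The key steps, in order, are as follows.

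First, view $L_h$ as a linear second order operator in an unknown $u$, with coefficients determined by the given $h\in W^{s,p}$. Its coefficients are smooth functions of $h$, so they lie in $W^{s,p}\subset C^{s-1,\alpha}$, using $s\ge2$ and $p>4$.

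Second, note that the principal symbol of $L_h$ at $x$ is $\eta\mapsto(\xi\wedge\operatorname{ad}_{h_x}^{-1}(\xi\wedge B_\eta))^+$. This is $\sigma_\xi(D_h\Phi)\circ(\cdot)\,h_x$ up to the invertible right factor $h_x^{-1}$, which we discard after multiplying by $h$. By Proposition~\ref{Principal_symbol} it is invertible for $\xi\ne0$, so $L_h$ is elliptic with continuous, indeed $C^{s-1,\alpha}$, coefficients. The first of these steps is the symbol bookkeeping that matches the quasilinear principal part to the symbol already computed.

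Third, since $h$ solves $L_h h=h-Q(h,\nabla h)$, estimate the right-hand side. Because $W^{s-1,p}$ is an algebra for $s-1\ge1$ and $p>4$, the right-hand side lies in $W^{s-1,p}$.

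Fourth, apply $L^p$ elliptic regularity for linear systems with coefficients in $W^{s,p}$ to $L_h h$. This is done locally in trivializations, using a partition of unity and interior Schauder or Calder\'on--Zygmund estimates. A difference-quotient argument gives $h\in W^{s+1,p}$.

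Fifth, iterate. Once $h\in W^{s+1,p}$, the coefficients of $L_h$ lie in $W^{s+1,p}$ and the right-hand side lies in $W^{s,p}$. Induction then gives $h\in W^{k,p}$ for every $k$, and Sobolev embedding yields $h\in C^\infty$. Positivity is unaffected, so $h\in\Omega^0_+(\operatorname{Sym}^2(\Lambda^+))$.

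The main obstacle is the fourth step: justifying elliptic regularity at the critical level where the coefficients have only the same regularity as the solution. The standard result says that if $L$ has $W^{s,p}$ coefficients, $u\in W^{s,p}$ and $Lu\in W^{s-1,p}$, then $u\in W^{s+1,p}$. It holds when $s\ge2$ and $p>n=4$, because the commutator terms from differentiating the coefficients land in $W^{s-1,p}$ by the multiplication theorem. We would prove it by differentiating the equation once in local coordinates. The derivative $\partial u$ then satisfies $L_h(\partial u)=\partial f-(\partial a)\,\partial^2u$, with $\partial f\in W^{s-2,p}$ and $(\partial a)\,\partial^2u\in W^{s-1,p}\cdot W^{s-2,p}\subset W^{s-2,p}$. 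We would then apply the basic $W^{s,p}$ a priori estimate for elliptic operators with $C^{s-1,\alpha}$ coefficients, justified by difference quotients, to conclude $\partial u\in W^{s,p}$.
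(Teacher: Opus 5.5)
Your proposal is correct and follows essentially the paper's route: write $\Phi(h)=0$ as a quasilinear second order system whose frozen principal part has the invertible symbol of Proposition~\ref{Principal_symbol}, gain one derivative at a time by difference quotients (or by differentiating the equation) together with the interior $L^p$ estimate for the frozen operator with H\"older continuous coefficients, and finish by Sobolev embedding. The one point to tidy is that, after multiplying by $h$, your frozen operator takes values in $\operatorname{End}(\Lambda^+)$ and its symbol is only injective with image $\operatorname{Sym}^2(\Lambda_x^+)h_x$; to get a square elliptic system, keep the right factor $h^{-1}$, i.e.\ work with $\Phi(h)=0$ itself as the paper does.
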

\begin{proof}[Proof of Corollary~\ref{Elliptic_regularity}]
Since $s\geq2$, we have that $h\in W^{2,p}(\operatorname{Sym}^2(\Lambda^+))$. Choose smooth local coordinates $(x_1,x_2,x_3,x_4)$ on an open subset of $M$, and choose a smooth orthonormal frame $(e_1,e_2,e_3)$ of the restriction of $\Lambda^+$ to this subset. In this frame, write $h$ as the symmetric matrix with entries $h_{ab}:=\langle h(e_b),e_a\rangle$, for $1\leq a,b\leq3$. In the local calculations below, $\partial_i h$ and $\partial_i\partial_j h$ denote the matrices obtained by differentiating these entries with respect to the coordinates $x_i$ and $x_j$. We use $\partial h$ to denote the tuple $(\partial_1h,\partial_2h,\partial_3h,\partial_4h)$.

In the coordinates $(x_1,x_2,x_3,x_4)$ and the local frame $(e_1,e_2,e_3)$ of $\Lambda^+$, substituting the expression defining $A(h)$ into the curvature $F_{A(h)}$ allows us to write the equation $\Phi(h)=0$ as
$$
\sum_{i,j=1}^4a^{ij}(x,h)\partial_i\partial_jh+b(x,h,\partial h)=0,
$$
where each $a^{ij}(x,h)$ is a linear map from the space $\operatorname{Sym}^2(\mathbb R^3)$ of symmetric real $3\times3$ matrices to itself, and $b(x,h,\partial h)$ takes values in the same space. The functions $a^{ij}$ and $b$ include the smooth coefficients of the chosen coordinates, frame and reference connection $\Gamma$. Each $a^{ij}$ depends smoothly on $x$ and the positive matrix $h$, and $b$ depends smoothly on $x$ and $h$ and is polynomial of degree at most two in the entries of $\partial h$. In particular, the coefficients of the second derivatives depend on $h$.

Since $p>4$, Sobolev embedding gives $h\in C^{1,\alpha}(\operatorname{Sym}^2(\Lambda^+))$, where $\alpha:=1-4/p$. Since $h$ is positive and $M$ is compact, the smallest eigenvalue of $h_x\colon\Lambda_x^+\rightarrow\Lambda_x^+$ has a positive lower bound independent of $x\in M$. Proposition~\ref{Principal_symbol} therefore gives a uniformly invertible principal symbol for the linear operator
$$
L_hv:=\sum_{i,j=1}^4a^{ij}(x,h(x))\partial_i\partial_jv
$$
on each coordinate ball whose closure is contained in the chosen coordinate neighbourhood. Here, $v$ is a function taking values in $\operatorname{Sym}^2(\mathbb R^3)$. The coefficients of $L_h$ are $C^{1,\alpha}$, so the interior $W^{2,p}$ estimate for elliptic systems applies to $L_h$, by freezing its coefficients on sufficiently small coordinate balls.

For the interior $W^{2,p}$ estimate for $L_h$, choose concentric coordinate balls $B_r\subseteq B_{2r}\subseteq B_{3r}$ whose closures are contained in the coordinate neighbourhood. Fix a coordinate direction $x_k$, with $1\leq k\leq4$. For a non-zero real number $t$ of sufficiently small absolute value, define the difference quotient
$$
\delta_{t,k}h(x):=\frac{h(x+t\hat e_k)-h(x)}{t},
$$
where $\hat e_k$ is the $k$-th standard coordinate vector of $\mathbb R^4$. We use $\delta_{t,k}$ also for difference quotients of the other coefficient functions in this coordinate neighbourhood. In particular, set
$$
\begin{cases}
a_h^{ij}(x):=a^{ij}(x,h(x)),\\
b_h(x):=b(x,h(x),\partial h(x)).
\end{cases}
$$
Taking the difference quotient of the local expression for $\Phi(h)=0$ gives
$$
L_h(\delta_{t,k}h)(x) =-\sum_{i,j=1}^4\delta_{t,k}a_h^{ij}(x)\partial_i\partial_jh(x+t\hat e_k)-\delta_{t,k}b_h(x).
$$
The functions $\delta_{t,k}a_h^{ij}$ are uniformly bounded on $B_{2r}$ because $h$ is $C^{1,\alpha}$. The second derivatives of $h$ belong to $L^p(B_{3r})$. The chain rule for $b$ shows that $\delta_{t,k}b_h$ is bounded in $L^p(B_{2r})$ independently of $t$: its terms contain bounded coefficients multiplied by $1$, $\delta_{t,k}h$ or $\delta_{t,k}\partial_i h$, and these difference quotients are bounded in $L^p(B_{2r})$ because $h\in W^{2,p}$. Consequently, the interior $W^{2,p}$ estimate for $L_h$ gives
$$
\|\delta_{t,k}h\|_{W^{2,p}(B_r)} \leq C(\|L_h(\delta_{t,k}h)\|_{L^p(B_{2r})}+\|\delta_{t,k}h\|_{L^p(B_{2r})}) \leq C',
$$
where $C$ and $C'$ are independent of $t$. All norms in this local estimate are the ordinary Sobolev norms of the matrix entries in the chosen coordinates and frame. The difference-quotient characterization of weak derivatives now gives $h\in W^{3,p}(B_r)$. Since the coordinate neighbourhood and the direction $x_k$ were arbitrary, we obtain that $h\in W^{3,p}(\operatorname{Sym}^2(\Lambda^+))$.

Repeating the interior $W^{2,p}$ estimate for $L_h$ after differentiating the local expression for $\Phi(h)=0$ gives
$$
h\in W^{m,p}(\operatorname{Sym}^2(\Lambda^+))
$$
for every integer $m\geq2$. At each step, the coefficients depend smoothly on $h$, and multiplication of $W^{1,p}$ functions preserves $W^{1,p}$ because $p>4$. Sobolev embedding therefore proves that $h$ is smooth.
\end{proof}

We now return to the functional $E$ and compute its second variation at a solution of $\Phi(h)=0$. For later use, we introduce the following notation: For a variation $\phi\in\Omega^0(\operatorname{Sym}^2(\Lambda^+))$, the form $B_\phi$ is given, in a local oriented orthonormal frame $(\Sigma_1,\Sigma_2,\Sigma_3)$ of $\Lambda^+|_{U}$ over an open subset $U\subseteq M$, by
$$
B_\phi|_{U}=\sum_{i=1}^3\phi(\Sigma_i)\otimes\Sigma_i.
$$
Thus, the ordinary self-dual $2$-form $\phi(\Sigma_i)$ is the coefficient of $B_\phi$ along the section $\Sigma_i$ of the coefficient bundle $\Lambda^+|_{U}$.

\begin{proposition}\label{Second_variation}
Assume that $M$ is closed and let $h$ be a solution of $\Phi(h)=0$. The second variation $\delta^2E[h]$ of the functional $E$ at $h$ satisfies
$$
\delta^2E[h](\phi,\psi) =-\int_M\langle (d_{A(h)}\operatorname{ad}_h^{-1}(d_{A(h)}B_\phi))^++\phi,\psi \rangle d\mu_g,
$$
for $\phi,\psi\in\Omega^0(\operatorname{Sym}^2(\Lambda^+))$.
In particular, the bilinear form $\delta^2E[h]$ is symmetric.
\end{proposition}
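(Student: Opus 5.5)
We plan to differentiate the first variation formula from the proof of Proposition~\ref{E-L} once more. That proof shows that for any smooth family $h(t)$ in the positive cone with $h(0)=h$ and $\dot h(0)=\psi$,
$$
\left.\frac{d}{dt}\right|_{t=0}E[h(t)]=\int_M\langle F_{A(h)}^+-h,\psi\rangle d\mu_g,
$$
and this identity holds at every point $h$ of the cone, not only at critical points. So we will define the first variation $\delta E[k](\psi):=\int_M\langle F_{A(k)}^+-k,\psi\rangle d\mu_g$ for every $k$ in the cone. We then set $\delta^2E[h](\phi,\psi):=\frac{d}{dt}|_{t=0}\delta E[h+t\phi](\psi)$ with $\psi$ held fixed. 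Equivalently, we take the mixed derivative $\partial_s\partial_tE[h+s\phi+t\psi]$ at $s=t=0$. Positivity of $h+s\phi+t\psi$ for small parameters follows from compactness of $M$, as in the proof of Proposition~\ref{E-L}.

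The key step is the derivative of $F_{A(h+t\phi)}^+$. By Lemma~\ref{Linearisation_formula}, the variation of the connection is $\dot A=-\operatorname{ad}_h^{-1}(d_{A(h)}B_\phi)$. The curvature variation formula then gives
$$
\left.\frac{d}{dt}\right|_{t=0}F_{A(h+t\phi)}^+=-(d_{A(h)}\operatorname{ad}_h^{-1}(d_{A(h)}B_\phi))^+,
$$
where the right-hand side is regarded as an endomorphism of $\Lambda^+$. The derivative of $-k$ is $-\phi$. Substituting these into $\delta E$ yields the stated formula. The hypothesis $\Phi(h)=0$ enters in two ways. It makes the first variation vanish, so the second variation is an intrinsic bilinear form and does not depend on the choice of curve. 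It also fits with the simplified linearization in Lemma~\ref{Linearisation_formula}: the displayed integrand equals $\langle D_h\Phi(\phi)h,\psi\rangle$, since $F^+_{A(h)}=h$.

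Symmetry is the main point. We will prove it in two independent ways and check that they agree. First, $E$ is a smooth function of the two real parameters $(s,t)$, by Lemma~\ref{Sobolev_smoothness} and smooth dependence of the integral. Hence the mixed partials commute, and $\delta^2E[h](\phi,\psi)=\delta^2E[h](\psi,\phi)$ by Schwarz's theorem.

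Second, as a direct check, we set $a_\phi:=\operatorname{ad}_h^{-1}(d_AB_\phi)$ and write $\langle (d_Aa_\phi)^+,\psi\rangle=\langle d_Aa_\phi,B_\psi\rangle$. The equality holds because $B_\psi$ is self-dual and $\langle B_X,B_Y\rangle=\langle X,Y\rangle$. Integrating by parts gives $\int\langle a_\phi,d_A^*B_\psi\rangle=-\int\langle a_\phi,\star d_AB_\psi\rangle$, using $\star B_\psi=B_\psi$. We then substitute $d_AB_\psi=\operatorname{ad}_h(a_\psi)$, which gives a pairing of the form $\int\langle a_\phi,\star[a_\psi,B_h]\rangle$.

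The obstacle is to verify that the pointwise bilinear form $(a,b)\mapsto\langle a,\star[b,B_h]\rangle$ on $T^*M\otimes\Lambda^+$ is symmetric. We would show that it equals, up to sign, the integrand of $\int[a\wedge b\wedge B_h]$ contracted with the triple product on $\Lambda^+$, namely $\sum\epsilon_{ijk}a_i\wedge b_j\wedge B_h^k$. This expression is symmetric under $a\leftrightarrow b$: swapping the two $1$-forms gives one sign from the wedge product and one from $\epsilon$, and the two cancel. We would carry out the verification in the eigenbasis used in Lemma~\ref{Adjoint_action}.
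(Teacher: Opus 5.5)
Your proposal is correct and follows the paper's proof: you differentiate the first variation $\int_M\langle F_{A(h)}^+-h,\psi\rangle\,d\mu_g$, which holds at every positive $h$, in the direction $\phi$ using Lemma~\ref{Linearisation_formula}, and you obtain symmetry from the equality of mixed partials of the smooth functional $E$. Your supplementary direct check is also sound, since in any oriented orthonormal coefficient frame $\langle a,\star[b,B_h]\rangle\,d\mu_g=-\sum_{i,j,k}\epsilon_{ijk}\,a_i\wedge b_j\wedge B_h^k$, which is symmetric in $a$ and $b$ without needing an eigenbasis, and this shows the formal self-adjointness of $\phi\mapsto(d_{A(h)}\operatorname{ad}_h^{-1}(d_{A(h)}B_\phi))^+$ explicitly rather than through Schwarz's theorem.
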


In the formula for $\delta^2E[h](\phi,\psi)$ in Proposition~\ref{Second_variation}, the superscript $+$ denotes the self-dual part of the $\Lambda^+$-valued $2$-form $d_{A(h)}\operatorname{ad}_h^{-1}(d_{A(h)}B_\phi)$. This self-dual part is regarded as an endomorphism of $\Lambda^+$ by the coefficient contraction in equation~(\ref{curvature_homomorphism}). The inner product in the formula for $\delta^2E[h](\phi,\psi)$ is the pointwise inner product of endomorphisms of $\Lambda^+$ induced by $g$.

\begin{proof}[Proof of Proposition~\ref{Second_variation}]
The first variation computed above in Proposition~\ref{E-L} is
$$
\delta E[h](\psi)=\int_M\langle F_{A(h)}^+-h,\psi\rangle d\mu_g.
$$
Differentiate the identity for $\delta E[h](\psi)$ with respect to $h$ in the direction $\phi$. For real $t$ of sufficiently small absolute value, the section $h+t\phi$ is positive; define $A(t):=A(h+t\phi)$ and $A:=A(0)$. Each $A(t)$ is a metric connection on the fixed bundle $\Lambda^+$. Lemma~\ref{Linearisation_formula} gives
$$
\left.\frac{d}{dt}\right|_{t=0}F_{A(t)}^+ =-(d_A\operatorname{ad}_h^{-1}(d_AB_\phi))^+.
$$
Substitution of this curvature derivative into the derivative of $\delta E[h](\psi)$ gives the stated expression for $\delta^2E[h](\phi,\psi)$. The equality of the mixed second derivatives of the smooth functional $E$ gives $\delta^2E[h](\phi,\psi)=\delta^2E[h](\psi,\phi)$.
\end{proof}

\begin{lemma}\label{Constant_scaling}
For every constant $\lambda>0$ and every $h\in\Omega^0_+(\operatorname{Sym}^2(\Lambda^+))$, it holds that
$$
\begin{cases}
A(\lambda h)=A(h),\\
\Phi(\lambda h)=\lambda^{-1}\Phi(h)+(\lambda^{-1}-1)\operatorname{Id}.
\end{cases}
$$
Consequently, for every fixed $h\in\Omega^0_+(\operatorname{Sym}^2(\Lambda^+))$, the ray $\{\lambda h:\lambda>0\}$ contains at most one solution of the equation $\Phi(h)=0$.
\end{lemma}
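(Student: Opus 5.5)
The plan is to use the uniqueness characterization of Lemma~\ref{Compatible_connection} and then read off the scaling of $\Phi$ directly from its definition.

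\textbf{Scale invariance of the connection.} Since $B_h$ is linear in $h$ and $\lambda$ is a constant, we have $B_{\lambda h}=\lambda B_h$. The operator $d_{A(h)}$ is linear over constants, so
$$
d_{A(h)}B_{\lambda h}=\lambda\,d_{A(h)}B_h=0.
$$
Moreover, $\lambda h\in\Omega^0_+(\operatorname{Sym}^2(\Lambda^+))$. Lemma~\ref{Compatible_connection} applied to $\lambda h$ therefore yields $A(\lambda h)=A(h)$. Alternatively, one can check this from the definition: $\operatorname{ad}_{\lambda h}=\lambda\operatorname{ad}_h$ and $d_\Gamma B_{\lambda h}=\lambda d_\Gamma B_h$, so the two factors of $\lambda$ cancel in $\operatorname{ad}_{\lambda h}^{-1}(d_\Gamma B_{\lambda h})$.

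\textbf{Scaling of $\Phi$.} Since $A(\lambda h)=A(h)$, the curvatures agree: $F^+_{A(\lambda h)}=F^+_{A(h)}$. Using $(\lambda h)^{-1}=\lambda^{-1}h^{-1}$, we get
$$
\Phi(\lambda h)=\lambda^{-1}F_{A(h)}^+h^{-1}-\operatorname{Id}.
$$
Substituting $F_{A(h)}^+h^{-1}=\Phi(h)+\operatorname{Id}$ gives
$$
\Phi(\lambda h)=\lambda^{-1}\Phi(h)+(\lambda^{-1}-1)\operatorname{Id},
$$
which is the second identity.

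\textbf{At most one solution on each ray.} Suppose $\Phi(\mu h)=0$ and $\Phi(\nu\mu h)=0$ for constants $\mu,\nu>0$. Applying the scaling identity to $h':=\mu h$ with constant $\nu$ gives
$$
0=\nu^{-1}\cdot 0+(\nu^{-1}-1)\operatorname{Id}.
$$
Hence $\nu=1$, so the two solutions coincide. Every point of the ray has the form $\nu\mu h$ for some $\nu>0$, so this covers all pairs of points on the ray.

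There is no real obstacle here. The only point requiring care is that $\lambda$ is constant: for a non-constant function, $d_{A(h)}$ would produce an extra term $d\lambda\wedge B_h$, and invariance of $A(h)$ would fail. This is why the statement is restricted to constant scalings.
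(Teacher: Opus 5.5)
Your proposal is correct and essentially matches the paper's proof. The paper gets $A(\lambda h)=A(h)$ by substituting $\operatorname{ad}_{\lambda h}=\lambda\operatorname{ad}_h$ and $d_\Gamma B_{\lambda h}=\lambda d_\Gamma B_h$ into the definition, which is your alternative route rather than your appeal to the uniqueness in Lemma~\ref{Compatible_connection}; it then derives the scaling of $\Phi$ and the one-solution-per-ray conclusion as you do, though your step of applying the identity at a solution $\mu h$ makes explicit what the paper leaves tacit by taking $h$ itself to be one of the solutions.
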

\begin{proof}[Proof of Lemma~\ref{Constant_scaling}]
Since $\lambda$ is constant, we have $B_{\lambda h}=\lambda B_h$ and $d_\Gamma B_{\lambda h}=\lambda d_\Gamma B_h$ by the definition of $B_h$. The definition of $\operatorname{ad}_h$ also gives $\operatorname{ad}_{\lambda h}=\lambda\operatorname{ad}_h$. Substitution into the definition of $A(h)$ yields
$$
A(\lambda h)=\Gamma-\lambda^{-1}\operatorname{ad}_h^{-1}(\lambda d_\Gamma B_h)=A(h).
$$
Consequently, we obtain that 
$$
\Phi(\lambda h)=F_{A(h)}^+(\lambda h)^{-1}-\operatorname{Id}=\lambda^{-1}\Phi(h)+(\lambda^{-1}-1)\operatorname{Id}.
$$
If $\Phi(h)=\Phi(\lambda h)=0$, then the displayed expression for $\Phi(\lambda h)$ implies $(\lambda^{-1}-1)\operatorname{Id}=0$. Thus, we conclude that $\lambda=1$.
\end{proof}

\subsection{The conformal subcone reduction}

We finish the section by considering the equation $\Phi_g(h)=0$ for endomorphisms of $\Lambda_g^+$ of the form $h=e^{2\omega}\operatorname{Id}$, where $\omega$ is a smooth real-valued function on $M$ and $\operatorname{Id}$ denotes the identity of $\Lambda_g^+$. Such an $h$ is an endomorphism of $\Lambda_g^+$, whereas $\hat g:=e^{2\omega}g$ is a Riemannian metric on $TM$. We use the subscript $g$ in $\Phi_g$ to emphasize the fixed background metric in the definition of $\Phi$. Proposition~\ref{Scalar_solution} identifies the connection $A(h)$ on $\Lambda_g^+$ with the connection on $\Lambda_{\hat g}^+$ induced by the Levi--Civita connection of $\hat g$, transported to $\Lambda_g^+$. The proposition also expresses $\Phi_g(h)$ in terms of the self-dual Weyl curvature and scalar curvature of $\hat g$. Recall that the metric $\hat g$ is anti-self-dual precisely when its self-dual Weyl curvature $W_{\hat g}^+$ vanishes.

\begin{proposition}\label{Scalar_solution}
For a smooth real-valued function $\omega$ on $M$, let $h:=e^{2\omega}\operatorname{Id}$ and $\hat g:=e^{2\omega}g$.
Then the positive endomorphism $h$ of $\Lambda_g^+$ solves the equation $\Phi_g(h)=0$ if and only if the metric $\hat g$ is anti-self-dual and has constant scalar curvature $6\sqrt{2}$.
\end{proposition}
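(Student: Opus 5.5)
The plan is to identify $A(h)$ with a transported Levi--Civita connection using the uniqueness in Lemma~\ref{Compatible_connection}. Then $F_{A(h)}^+h^{-1}$ becomes the $\Lambda^+\to\Lambda^+$ block of the curvature operator of $\hat g$. The standard decomposition $W_{\hat g}^++\tfrac{s_{\hat g}}{12}$ of that block \cite{ahs,besse} then splits $\Phi_g(h)=0$ into its trace-free and trace parts.

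\emph{Step 1: frames.} The bundles $\Lambda_g^+$ and $\Lambda_{\hat g}^+$ coincide as subbundles of $\Lambda^2$, since the Hodge star on $2$-forms is conformally invariant. Only the fibre metrics differ: $|\sigma|_{\hat g}=e^{-2\omega}|\sigma|_g$. I will take a local oriented $g$-orthonormal coframe $(\theta_a)$, put $\hat\theta_a:=e^{\omega}\theta_a$, and let $\Sigma_i,\hat\Sigma_i$ be the corresponding frames from equation~(\ref{self_dual_basis}). Then $\hat\Sigma_i=e^{2\omega}\Sigma_i$, and so
$$
B_h=\sum_{i=1}^3e^{2\omega}\Sigma_i\otimes\Sigma_i=\sum_{i=1}^3\hat\Sigma_i\otimes\Sigma_i .
$$
Let $R\colon\Lambda_{\hat g}^+\to\Lambda_g^+$ be the orientation-preserving isometry of coefficient bundles given by $R=e^{-2\omega}\operatorname{Id}$, which sends $\hat\Sigma_i\mapsto\Sigma_i$. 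Then $B_h$ is the $R$-transport of the tautological form $\hat B:=\sum_i\hat\Sigma_i\otimes\hat\Sigma_i$. Here $\hat B$ is the identity section of $\Lambda^2\otimes\Lambda_{\hat g}^+$ restricted to $\Lambda^+_{\hat g}$.

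\emph{Step 2: $A(h)=R_*\hat\Gamma$.} Let $\hat\Gamma$ be the connection on $\Lambda^+_{\hat g}$ induced by the Levi--Civita connection of $\hat g$. The operator $d_{\hat\Gamma}$ on $\Lambda^+_{\hat g}$-valued forms is the alternation of the tensor-product Levi--Civita connection of $\hat g$. The identity section $\hat B$ is parallel for that tensor-product connection, because $\hat\nabla$ preserves $\hat\star$ and $\hat g$. Hence $d_{\hat\Gamma}\hat B=0$; this also follows directly from torsion-freeness. Transporting by $R$ gives $d_{R_*\hat\Gamma}B_h=0$, and Lemma~\ref{Compatible_connection} yields $A(h)=R_*\hat\Gamma$.

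\emph{Step 3: curvature.} The curvature of $\hat\Gamma$ in the frame $\hat\Sigma_i$ is given by the curvature operator $\hat{\mathcal R}$ of $\hat g$. Its self-dual part, read through equation~(\ref{curvature_homomorphism}) in the $\hat\Sigma$ frames, is $c\,(W^+_{\hat g}+\tfrac{s_{\hat g}}{12}\operatorname{Id})$. The universal constant $c>0$ arises from the $-1/\sqrt2$ in equation~(\ref{self_dual_basis}) and from the cross-product identification of $\Lambda^+$ with $\mathfrak{so}(\Lambda^+)$, whose inner product is $(-,-)_{\mathfrak g}$. I expect $c=1/\sqrt2$.

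Transporting by $R$ changes only the coefficient frame $\hat\Sigma_i\mapsto\Sigma_i$. Rewriting the ordinary-form factor as $\hat\Sigma_j=e^{2\omega}\Sigma_j$ then multiplies the endomorphism matrix by $e^{2\omega}$. Hence
$$
F^+_{A(h)}=e^{2\omega}c\,\bigl(W^+_{\hat g}+\tfrac{s_{\hat g}}{12}\bigr),
\qquad
\Phi_g(h)=c\,W^+_{\hat g}+\Bigl(\tfrac{c\,s_{\hat g}}{12}-1\Bigr)\operatorname{Id},
$$
where $W^+_{\hat g}$ is expressed in the orthonormal frame $\hat\Sigma_i$, identified with $\Sigma_i$.

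\emph{Step 4: split.} Since $W^+_{\hat g}$ is trace-free, $\Phi_g(h)=0$ holds if and only if $W^+_{\hat g}=0$ and $s_{\hat g}=12/c$. With $c=1/\sqrt2$ this gives $s_{\hat g}=12\sqrt2=$\,\dots, so I must recheck the constant: the required value $6\sqrt2$ forces $c=\sqrt2$.

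The main obstacle is exactly this bookkeeping of the constant $c$ and its sign. I would fix it by a single test case, the round unit $S^4$, where $\mathcal R=\operatorname{Id}$ on $\Lambda^2$. There I would compute $F_\Gamma$ directly in the frame of equation~(\ref{self_dual_basis}) using the cross-product convention $[\Sigma_i,\Sigma_j]=\epsilon_{ijk}\Sigma_k$. Positivity of this curvature is guaranteed by the orientation remark after equation~(\ref{self_dual_basis}), and the test should produce $F^+_\Gamma=\tfrac{s}{12}c\operatorname{Id}$ with $s=12$, which determines $c$. Everything else in the argument is convention-independent.
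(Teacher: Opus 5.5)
Your proposal follows the paper's route. You transport the Levi--Civita connection of $\hat g$ along the isometry $\hat\Sigma_i\mapsto\Sigma_i$, and observe that this carries $\sum_i\hat\Sigma_i\otimes\hat\Sigma_i$ to $B_h$. You then identify the result with $A(h)$ by Lemma~\ref{Compatible_connection}, and split $\Phi_g(h)=c\,W^+_{\hat g}+(c\,s_{\hat g}/12-1)\operatorname{Id}$ into trace-free and trace parts.

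The one step not yet proved is the value of $c$, and it is the whole quantitative content of the number $6\sqrt2$. As written, you first guess $c=1/\sqrt2$, which would give $12\sqrt2$. You then read $c=\sqrt2$ off the desired conclusion, which is circular.

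Your proposed check does settle it, and in the right direction. Take the frame of equation~(\ref{self_dual_basis}) and set $\Omega_{ab}(X,Y)=\langle R(X,Y)E_b,E_a\rangle$. The induced connection on $\Lambda^+$ then has curvature coefficients $F^i=-(\Omega_{0i}+\Omega_{jk})$ for cyclic $(i,j,k)$. Write $\mathcal R(\theta_a\wedge\theta_b):=\Omega_{ab}$ for the curvature operator; it equals $\operatorname{Id}$ on the unit sphere and has $\Lambda^+$-block $W^++\tfrac{s}{12}$. Since $\theta_0\wedge\theta_i+\theta_j\wedge\theta_k=-\sqrt2\,\Sigma_i$, this reads $F^i=\sqrt2\,\mathcal R(\Sigma_i)$. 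So $c=\sqrt2$ for every metric, not only in the test case.

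On the unit $S^4$ one has $\Omega_{ab}=\theta_a\wedge\theta_b$. Hence $F^i=\sqrt2\,\Sigma_i$ and $F^+_\Gamma=\sqrt2\operatorname{Id}$, while $s/12=1$. This matches the paper's formula $(F^i)^+=\sqrt2\sum_j(W^+_{ij}+\tfrac{s_{\hat g}}{12}\delta_{ij})\hat\Sigma_j$ and its round-sphere remark $F^+_\Gamma=(\sqrt2/r^2)\operatorname{Id}$. It yields $s_{\hat g}=12/\sqrt2=6\sqrt2$. With this computation in place of the guess, your argument is complete and coincides with the paper's proof.
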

\begin{proof}[Proof of Proposition~\ref{Scalar_solution}]
The Hodge star on ordinary $2$-forms is conformally invariant in dimension four. Consequently, $\Lambda_g^+$ and $\Lambda_{\hat g}^+$ are the same subbundle of $\Lambda^2$, but the fibre metrics induced by $g$ and $\hat g$ satisfy
$$
\langle\alpha,\gamma\rangle_{\hat g}=e^{-4\omega}\langle\alpha,\gamma\rangle_g
$$
for every point $x\in M$ and every pair $\alpha,\gamma\in\Lambda_{g,x}^+$. Define a bundle map $R\colon\Lambda_g^+\rightarrow\Lambda_{\hat g}^+$ by $R(\alpha):=e^{2\omega}\alpha$. The relation between the fibre metrics induced by $g$ and $\hat g$ shows that $R$ is an orientation-preserving isometry from $\Lambda_g^+$ to $\Lambda_{\hat g}^+$.

Let $\hat\nabla$ denote the metric connection on $\Lambda_{\hat g}^+$ induced by the Levi--Civita connection of $\hat g$. Define a metric connection $A$ on $\Lambda_g^+$ by
$$
\nabla^A_Xv:=R^{-1}(\hat\nabla_X(Rv))
$$
for every smooth tangent vector field $X$ on $M$ and every smooth section $v$ of $\Lambda_g^+$. Choose an open subset $U\subseteq M$ and a local oriented orthonormal frame $(\Sigma_1,\Sigma_2,\Sigma_3)$ of $\Lambda_g^+|_{U}$, and set $\hat\Sigma_i:=e^{2\omega}\Sigma_i$ for $1\leq i\leq3$. Thus, $(\hat\Sigma_1,\hat\Sigma_2,\hat\Sigma_3)$ is a local orthonormal frame of $\Lambda_{\hat g}^+|_{U}$.

Define the $\Lambda_{\hat g}^+$-valued self-dual $2$-form $\hat B$ by
$$
\hat B|_{U}:=\sum_{i=1}^3\hat\Sigma_i\otimes\hat\Sigma_i.
$$
This definition is equation~(\ref{form_from_endomorphism}) with background metric $\hat g$ and endomorphism $\operatorname{Id}_{\Lambda_{\hat g}^+}$.
In each tensor product $\hat\Sigma_i\otimes\hat\Sigma_i$ in $\hat B|_{U}$, the first factor $\hat\Sigma_i$ is regarded as an ordinary $2$-form on $U$, and the second factor $\hat\Sigma_i$ is regarded as a section of the coefficient bundle $\Lambda_{\hat g}^+|_{U}$. The $\Lambda_{\hat g}^+$-valued $2$-form $\hat B$ is parallel for the tensor product connection induced by the Levi--Civita connection of $\hat g$ on $\Lambda^2$ and by $\hat\nabla$ on $\Lambda_{\hat g}^+$. Since the Levi--Civita connection of $\hat g$ is torsion-free, alternation of the tensor product covariant derivative of $\hat B$ gives
$$
d_{\hat\nabla}\sum_{i=1}^3\hat\Sigma_i\otimes\hat\Sigma_i=0.
$$
Applying $R^{-1}$ only to the coefficient factor of the $\Lambda_{\hat g}^+$-valued $2$-form $\hat B$ gives
$$
(\operatorname{Id}_{\Lambda^2}\otimes R^{-1}) \sum_{i=1}^3\hat\Sigma_i\otimes\hat\Sigma_i =\sum_{i=1}^3\hat\Sigma_i\otimes\Sigma_i =B_h.
$$
In the displayed expression for $B_h$, the map $\operatorname{Id}_{\Lambda^2}$ fixes the ordinary $2$-form in each tensor product, while $R^{-1}$ changes the coefficient factor from a section of $\Lambda_{\hat g}^+$ to a section of $\Lambda_g^+$. For each integer $k\geq0$, extend $R^{-1}$ to a map from $\Lambda_{\hat g}^+$-valued $k$-forms to $\Lambda_g^+$-valued $k$-forms by applying $R^{-1}$ to the coefficient factor and the identity to the ordinary $k$-form factor. The definition of $\nabla^A$ implies that the maps induced by $R^{-1}$ on bundle-valued forms commute with the exterior covariant derivatives $d_{\hat\nabla}$ and $d_A$. Consequently, we have that $d_AB_h=0$, and uniqueness in Lemma~\ref{Compatible_connection} gives $A=A(h)$.

Write $W_{\hat g}^+$ for the self-dual Weyl curvature, regarded as a trace-free self-adjoint endomorphism of $\Lambda_{\hat g}^+$, and write $R_{\hat g}$ for the scalar curvature of $\hat g$. Define $W^+:=R^{-1}W_{\hat g}^+R$ as an endomorphism of $\Lambda_g^+$.
To compute the curvature $F_{\hat\nabla}$ of the connection $\hat\nabla$ on $\Lambda_{\hat g}^+$ with the normalization in equation~(\ref{self_dual_basis}), choose a local oriented $\hat g$-orthonormal co-frame $(\hat\theta_0,\hat\theta_1,\hat\theta_2,\hat\theta_3)$ of $T^*M|_{U}$ inducing $(\hat\Sigma_1,\hat\Sigma_2,\hat\Sigma_3)$, after shrinking $U$ if necessary, with dual frame $(\hat E_0,\hat E_1,\hat E_2,\hat E_3)$ of $TM|_{U}$. Let $\hat R$ denote the Riemannian curvature tensor of $\hat g$, with the curvature convention used throughout this paper; the tensor $\hat R$ is distinct from the scalar function $R_{\hat g}$. For $0\leq a,b\leq3$, define an ordinary $2$-form $\Omega_{ab}$ by
$$
\Omega_{ab}(X,Y):=\langle\hat R(X,Y)\hat E_b,\hat E_a\rangle_{\hat g}
$$
for tangent vector fields $X$ and $Y$ on $U$.

Use the cross-product identification of $\mathfrak{so}(\Lambda_{\hat g}^+)$ with $\Lambda_{\hat g}^+$ to write the curvature of $\hat\nabla$ as
$$
F_{\hat\nabla}=\sum_{i=1}^3F^i\otimes\hat\Sigma_i,
$$
where each $F^i$ is an ordinary $2$-form on $U$ and $\hat\Sigma_i$ is its coefficient section of $\Lambda_{\hat g}^+|_{U}$. The connection $\hat\nabla$ induced on $\Lambda_{\hat g}^+$ gives
$$
F^i=-(\Omega_{0i}+\Omega_{jk}),
$$
where $(i,j,k)$ is a cyclic permutation of $(1,2,3)$. The normalization of the local orthonormal frame $(\hat\Sigma_1,\hat\Sigma_2,\hat\Sigma_3)$ of $\Lambda_{\hat g}^+|_{U}$ is
$$
\hat\Sigma_i=-\frac{\hat\theta_0\wedge\hat\theta_i+\hat\theta_j\wedge\hat\theta_k}{\sqrt{2}}.
$$
Define $W^+_{ij}:=\langle W_{\hat g}^+(\hat\Sigma_j),\hat\Sigma_i\rangle_{\hat g}$ for $1\leq i,j\leq3$, so that $(W^+_{ij})$ is the matrix of $W_{\hat g}^+$ in the frame $(\hat\Sigma_1,\hat\Sigma_2,\hat\Sigma_3)$. The self-dual curvature decomposition \cite{ahs,besse} then gives
$$
(F^i)^+=\sqrt{2}\sum_{j=1}^3\left(W^+_{ij}+\frac{R_{\hat g}}{12}\delta_{ij}\right)\hat\Sigma_j,
$$
where $(F^i)^+$ denotes the self-dual part of the ordinary $2$-form $F^i$, and $\delta_{ij}$ is the Kronecker symbol. The projection to self-dual $2$-forms is the same for $g$ and $\hat g$. Applying $R^{-1}$ to the coefficient factor of $F_{\hat\nabla}$ gives the curvature of $A(h)$ on $\Lambda_g^+$ as
$$
F_{A(h)}=\sum_{i=1}^3F^i\otimes\Sigma_i.
$$
Since $\hat\Sigma_j=e^{2\omega}\Sigma_j$, the identification in equation~(\ref{curvature_homomorphism}) using the fixed metric $g$ therefore gives
$$
F_{A(h)}^+=\sqrt{2}e^{2\omega}\left(W^++\frac{R_{\hat g}}{12}\operatorname{Id}\right).
$$
In the displayed formula for $F_{A(h)}^+$, both $F_{A(h)}^+$ and $W^+$ are self-adjoint endomorphisms of $\Lambda_g^+$, and $\operatorname{Id}$ is the identity of $\Lambda_g^+$. Since $h^{-1}=e^{-2\omega}\operatorname{Id}$, the definition $\Phi_g(h)=F_{A(h)}^+h^{-1}-\operatorname{Id}$ yields
$$
\Phi_g(h)=\sqrt{2}W^++\left(\frac{\sqrt{2}R_{\hat g}}{12}-1\right)\operatorname{Id}.
$$
The endomorphism $W^+$ of $\Lambda_g^+$ is trace-free and $\Lambda_g^+$ has rank three. Taking the trace of the displayed expression for $\Phi_g(h)$ shows that $\Phi_g(h)=0$ implies $R_{\hat g}=6\sqrt{2}$. Substituting this scalar curvature value into the expression for $\Phi_g(h)$ gives $W^+=0$. Since $W^+=R^{-1}W_{\hat g}^+R$, the equality $W^+=0$ is equivalent to $W_{\hat g}^+=0$. Conversely, substitution of $R_{\hat g}=6\sqrt{2}$ and $W_{\hat g}^+=0$ into the expression for $\Phi_g(h)$ gives $\Phi_g(h)=0$.
\end{proof}

\begin{remark}
The normalization $F_{A(h)}^+=h$ is stronger than the Yang--Mills equation alone. For the round metric $g:=g(r)$ of radius $r$ on the oriented sphere $\mathbb{S}^4$, its induced Levi--Civita connection $\Gamma:=\Gamma(r)$ on $\Lambda_{g}^+\rightarrow \mathbb{S}^4$ is Yang--Mills for every $r>0$, and its self-dual curvature satisfies
$$
F_{\Gamma}^+=\frac{\sqrt{2}}{r^2}\operatorname{Id}.
$$
Here, the tensor $F_{\Gamma}^+$ is regarded as a positive self-adjoint endomorphism of the Euclidean bundle $\Lambda_g^+\rightarrow\mathbb{S}^4$. This identification contracts the coefficient factor of the $\Lambda_g^+$-valued self-dual $2$-form $F_{\Gamma}^+$ using the fibre metric induced by $g$, as in equation~(\ref{curvature_homomorphism}). The symbol $\operatorname{Id}$ denotes the identity endomorphism of $\Lambda_g^+$. Thus, the positive self-adjoint endomorphism $h:=(\sqrt{2}/r^2)\operatorname{Id}$ of $\Lambda_g^+$ is the solution of $\Phi_g(h)=0$ corresponding to $\Gamma$, and it satisfies $A(h)=\Gamma$.
The conformal metric on $\mathbb{S}^4$ associated with the scalar endomorphism $h$ of $\Lambda_g^+$ is $\hat g=(\sqrt{2}/r^2)g$, whose scalar curvature is $6\sqrt{2}$.
The particular endomorphism $\operatorname{Id}$ of $\Lambda_g^+$ satisfies $\Phi_g(\operatorname{Id})=0$ only when $r^2=\sqrt{2}$. As in Lemma~\ref{Constant_scaling}, for each fixed background metric $g$, constant rescaling of a positive endomorphism of $\Lambda_g^+$ preserves its compatible connection. For the solution $h=(\sqrt{2}/r^2)\operatorname{Id}$ of $\Phi_g(h)=0$, the equation $\Phi_g(\lambda h)=0$ with $\lambda>0$ holds only when $\lambda=1$. Thus, the ray $\{\lambda h:\lambda>0\}$ contains exactly one solution of the frame equation $\Phi_g(h)=0$, namely $h$.
\end{remark}

\begin{remark}\label{Conformal_comparison}
The use of a conformally rescaled Levi--Civita connection is classical. Etesi and Hausel \cite{etesihausel} construct instantons on Taub--NUT space by conformally rescaling its metric using positive harmonic functions and projecting the spin connection to one of the two $\mathfrak{su}(2)$ factors of $\mathfrak{spin}(4)$. Their rescaled metric is half-conformally flat, meaning that one of its two Weyl curvature components vanishes, and scalar-flat. For a smooth real-valued function $\omega$ on $M$, suppose that $h:=e^{2\omega}\operatorname{Id}$ satisfies $\Phi_g(h)=0$. By Proposition~\ref{Scalar_solution}, the rescaled metric $\hat g=e^{2\omega}g$ satisfies $W_{\hat g}^+=0$ and $R_{\hat g}=6\sqrt{2}$. The connection $A(h)$ on $\Lambda_g^+$, obtained by transporting the connection induced by $\hat g$ on $\Lambda_{\hat g}^+$, consequently satisfies $F_{A(h)}^+=h$ as an equality of endomorphisms of $\Lambda_g^+$. The anti-self-dual curvature $F_{A(h)}^-\in\Omega^0(\Lambda_g^-\otimes\Lambda_g^+)$ is determined by the trace-free Ricci tensor of $\hat g$ and need not vanish. Here, the trace-free Ricci tensor is $\operatorname{Ric}_{\hat g}-(R_{\hat g}/4)\hat g$, where $\operatorname{Ric}_{\hat g}$ is the Ricci tensor of $\hat g$. Thus, the Yang--Mills connections $A(h)$ on $\Lambda_g^+$ obtained in Proposition~\ref{Scalar_solution} are self-dual instantons, meaning $F_{A(h)}^-=0$, precisely when $\hat g$ is Einstein.
\end{remark}

Combining Proposition~\ref{Scalar_solution} with the solution of the Yamabe problem gives the following existence theorem for $\Phi_g(h)=0$.

\begin{theorem}\label{Yamabe_problem}
Let $(M,g)$ be a closed oriented anti-self-dual Riemannian 4-manifold whose conformal class $[g]$ has positive Yamabe constant.
Then the equation $\Phi_g(h)=0$ admits a solution. More precisely, if $\hat g=e^{2\omega}g$ is a Yamabe metric of constant scalar curvature $6\sqrt{2}$, then $h=e^{2\omega}\operatorname{Id}$ is a solution to $\Phi_g(h)=0$.
\end{theorem}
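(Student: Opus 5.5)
The plan is to reduce Theorem~\ref{Yamabe_problem} to Proposition~\ref{Scalar_solution} by producing a conformal factor $\omega$ with $\hat g=e^{2\omega}g$ of constant scalar curvature exactly $6\sqrt{2}$.

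First, anti-self-duality is conformally invariant in dimension four. The Weyl tensor, viewed as a $(1,3)$-tensor, is conformally invariant. The splitting $\Lambda^2=\Lambda^+\oplus\Lambda^-$ depends only on the conformal class, since the Hodge star on $2$-forms is conformally invariant, as noted at the start of the proof of Proposition~\ref{Scalar_solution}. Hence $W^+_g=0$ implies $W^+_{\hat g}=0$ for every $\hat g\in[g]$.

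Second, I invoke the solution of the Yamabe problem \cite{yamabe,trudinger,aubin,schoen}. On a closed manifold with positive Yamabe constant $Y([g])>0$, there is a smooth metric $\tilde g=u^{2}g$ with $u>0$ smooth that minimizes the Yamabe functional. Its scalar curvature is a positive constant $c$; positivity follows because the sign of the constant Yamabe-metric scalar curvature equals the sign of $Y([g])$. Since scalar curvature scales as $R_{\lambda\tilde g}=\lambda^{-1}R_{\tilde g}$ for constant $\lambda>0$, I set $\hat g:=(c/(6\sqrt{2}))\tilde g$, which has constant scalar curvature $6\sqrt{2}$. Writing $\hat g=e^{2\omega}g$ with $\omega:=\log u+\tfrac12\log(c/(6\sqrt2))$ gives a smooth real-valued $\omega$. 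This also handles the ``more precisely'' clause: any Yamabe metric in $[g]$ normalized to scalar curvature $6\sqrt2$ is of this form.

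Third, the metric $\hat g$ is anti-self-dual by the first step and has constant scalar curvature $6\sqrt{2}$. Proposition~\ref{Scalar_solution} then gives $\Phi_g(e^{2\omega}\operatorname{Id})=0$, and $h=e^{2\omega}\operatorname{Id}$ is a smooth positive self-adjoint endomorphism field, so it lies in $\Omega^0_+(\operatorname{Sym}^2(\Lambda^+))$.

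The only point that needs any care is the sign argument guaranteeing $c>0$, which is what allows rescaling to the positive value $6\sqrt2$. I would justify it from the variational characterization: with $u$ the minimizer, $c\,\|u\|_{L^4}^2=Y([g])>0$. Everything else is a direct citation.
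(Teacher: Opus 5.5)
Your proposal is correct and follows essentially the same route as the paper's proof. Both use conformal invariance of anti-self-duality, then the Yamabe solution followed by a constant rescaling to scalar curvature $6\sqrt{2}$, and then Proposition~\ref{Scalar_solution}; your explicit check that the constant is positive, via $c\|u\|_{L^4}^2=Y([g])$, fills in a step the paper leaves implicit.
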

\begin{proof}[Proof of Theorem~\ref{Yamabe_problem}]
By the solution of the Yamabe problem, the conformal class $[g]$ contains a smooth metric of positive constant scalar curvature. A further positive constant rescaling gives a Yamabe metric $\hat g=e^{2\omega}g$ of scalar curvature $6\sqrt{2}$, for a smooth real-valued function $\omega$ on $M$.
Since anti-self-duality is conformally invariant in dimension four, the metric $\hat g$ is anti-self-dual.
Proposition~\ref{Scalar_solution} therefore gives $\Phi_g(e^{2\omega}\operatorname{Id})=0$.
\end{proof}
\section{Concluding Remarks}

The formalism developed in this article gives a global description of the Yang--Mills equation on the open subset of the moduli space of Yang--Mills connections on $\Lambda^+$ for which the self-dual curvature homomorphism $F_A^+\colon\Lambda^+\rightarrow\Lambda^+$ is an orientation-preserving isomorphism. Theorem~\ref{Yang_Mills_equivalence} identifies this subset with the solution set of $\Phi_g(h)=0$. The unknown $h$ is a positive self-adjoint endomorphism field of $\Lambda^+$, and the value $\Phi_g(h)$ is a section of the same bundle $\operatorname{Sym}^2(\Lambda^+)$ of rank six. The polar decomposition of $F_A^+$ gives a unique representative of each gauge class whose self-dual curvature is a positive self-adjoint endomorphism of $\Lambda^+$, before the equation $\Phi_g(h)=0$ is studied. Theorem~\ref{Elliptic_index} then shows that every linearization $D_h\Phi_g$ is elliptic and has index zero. Thus, the equation $\Phi_g(h)=0$ provides a setting for local moduli theory and variational arguments directly in terms of the endomorphism field $h$.

For an anti-self-dual conformal class of positive Yamabe constant, Theorem~\ref{Yamabe_problem} gives a global solution of $\Phi_g(h)=0$ of the form $h=e^{2\omega}\operatorname{Id}$. The positive endomorphisms of $\Lambda^+$ of the form $e^{2\omega}\operatorname{Id}$ constitute the scalar part of $\Omega^0_+(\operatorname{Sym}^2(\Lambda^+))$. A basic problem is to determine whether solutions of $\Phi_g(h)=0$ which are not scalar multiples of $\operatorname{Id}$ exist and, if so, whether families of such solutions can meet the scalar family. To study this question, one should compute $D_h\Phi_g$ explicitly at a solution $h=e^{2\omega}\operatorname{Id}$ of $\Phi_g(h)=0$ for which $e^{2\omega}g$ is an anti-self-dual Yamabe metric. One should then compare the linearization $D_h\Phi_g$ at that scalar solution of $\Phi_g(h)=0$ with the deformation complex for anti-self-dual conformal structures. If the kernel of $D_h\Phi_g$ is zero, the index-zero theorem makes $D_h\Phi_g$ invertible, and the implicit function theorem gives persistence of solutions of $\Phi_g(h)=0$ under sufficiently small smooth changes of the background metric $g$, after identifying the corresponding bundles of self-dual forms.

The main global difficulty is compactness of the solution set of $\Phi_g(h)=0$. For a sequence of solutions $h_n$ of $\Phi_g(h_n)=0$, the associated Yang--Mills connections $A(h_n)$ may develop bubbles through concentration of curvature. The smallest eigenvalue of $h_n$ may also tend to zero, so that the sequence approaches the boundary of the positive cone $\Omega^0_+(\operatorname{Sym}^2(\Lambda^+))$. Since $F_{A(h_n)}^+=h_n$ as endomorphisms of $\Lambda^+$, this second possibility is the degeneration of the curvature homomorphism on which the present description depends. A degree theory for $\Phi_g$ would require a priori estimates controlling both curvature concentration and loss of positivity. Corollary~\ref{Energy_identity} expresses the reduced energy $E[h]$ of a solution $h$ of $\Phi_g(h)=0$ in terms of $|h|^2$. For any positive endomorphism field $h$ of $\Lambda^+$, one may also write
$$
h=(\det h)^{1/3}((\det h)^{-1/3}h),
$$
where $\det h$ is the positive function obtained by taking the determinant of $h_x$ at each $x\in M$. The positive function $(\det h)^{1/3}$ measures the scalar size of $h$, and the positive endomorphism $(\det h)^{-1/3}h$ of $\Lambda^+$ has determinant one. This decomposition of $h$ may help distinguish control of the scalar size from control of the eigenvalue ratios.

One should also compare the linearization $D_h\Phi_g$ with the Yang--Mills Jacobi operator at $A(h)$, namely the linearization of the Yang--Mills equation with respect to the connection, and with the Yang--Mills detour complex \cite{gover}. Such a comparison should relate the infinitesimal solutions of $D_h\Phi_g(\eta)=0$ to infinitesimal Yang--Mills deformations modulo infinitesimal gauge transformations. The comparison should also relate the determinant lines associated with $D_h\Phi_g$ and with the Yang--Mills deformation complex. For a Fredholm operator, its determinant line is the tensor product of the top exterior power of its kernel and the dual of the top exterior power of its cokernel. This would place the index calculation of Theorem~\ref{Elliptic_index} within the usual deformation theory of Yang--Mills connections.

A further question concerns the Yang--Mills stress-energy tensor, obtained by varying the Yang--Mills energy with respect to the background metric. In dimension four, the Yang--Mills stress-energy tensor is a trace-free symmetric covariant $2$-tensor on $M$, and it is invariant under gauge transformations of the connection. Thus, assigning to a gauge class represented by $A(h)$ its stress-energy tensor defines a map from the moduli space described in Theorem~\ref{Yang_Mills_equivalence} to the space of trace-free symmetric covariant $2$-tensors on $M$. Asking which prescribed tensors arise in this way gives a Rainich-type problem. The equation $\Phi_g(h)=0$ describes the Yang--Mills connection before any stress-energy tensor is prescribed.

The hypothesis that $F_A^+\colon\Lambda^+\rightarrow\Lambda^+$ is an orientation-preserving isomorphism is essential to the construction of the positive endomorphism $h$ of $\Lambda^+$ and to the reconstruction of the connection $A(h)$ from the $\Lambda^+$-valued $2$-form $B_h$. Under this hypothesis, the Yang--Mills equation becomes the determined elliptic system $$\Phi_g(h)\equiv F_{A(h)}^+h^{-1}-\operatorname{Id}=0.$$ Understanding the boundary of this subset of the Yang--Mills moduli space, and whether a useful compactification can include limits for which the curvature homomorphism loses rank, remains a further problem.

\section*{Acknowledgement}

The author is deeply grateful to Alexander Veselov and Evgeny Ferapontov for very useful and inspiring discussions.
Special thanks should also go to Cheng He and Zongjian Han for careful proofreading. The author expresses gratitude to the reviewers for various suggestions as well.

The author is indebted to the Russian mathematical society, especially to all the professors teaching in the ``Math in Moscow'' program, and most importantly to the author's supervisor Alexander Petrovich Veselov at Loughborough University, as they cultivated the author's mathematical literacy and maturity.

This research was completed while the author was studying at the Mathematics Institute of the University of Warwick.
The author therefore would like to thank the University of Warwick for its hospitality.

\section*{Statements and Declarations}

\noindent\textbf{Funding.} No funding was received to assist with the preparation of this manuscript.

\noindent\textbf{Competing interests.} The author declares no relevant financial or non-financial interests.

\noindent\textbf{Data availability.} Data sharing is not applicable to this article.

\noindent\textbf{Use of generative AI.} During the preparation of this manuscript, the author used OpenAI’s ChatGPT to help handle minor details and grammatical issues. All suggested changes were reviewed by the author, who takes full responsibility for the mathematical accuracy and the final content of this article.


\begin{thebibliography}{99}

\bibitem{ahs}
M.~F. Atiyah, N.~J. Hitchin and I.~M. Singer,
\emph{Self-duality in four-dimensional Riemannian geometry},
Proc. Roy. Soc. London Ser. A \textbf{362} (1978), 425--461.

\bibitem{aubin}
T. Aubin,
\emph{\'{E}quations diff\'{e}rentielles non lin\'{e}aires et probl\`eme de Yamabe concernant la courbure scalaire},
J. Math. Pures Appl. \textbf{55} (1976), 269--296.

\bibitem{bpst}
A.~A. Belavin, A.~M. Polyakov, A.~S. Schwartz and Yu.~S. Tyupkin,
\emph{Pseudoparticle solutions of the Yang--Mills equations},
Phys. Lett. B \textbf{59} (1975), 85--87.

\bibitem{besse}
A.~L. Besse,
\emph{Einstein Manifolds},
Ergebnisse der Mathematik und ihrer Grenzgebiete, vol.~10, Springer, 1987.

\bibitem{bourguignonlawson}
J.-P. Bourguignon and H.~B. Lawson, Jr.,
\emph{Stability and isolation phenomena for Yang--Mills fields},
Comm. Math. Phys. \textbf{79} (1981), 189--230.

\bibitem{capovilla}
R. Capovilla, T. Jacobson and J. Dell,
\emph{General relativity without the metric},
Phys. Rev. Lett. \textbf{63} (1989), 2325--2328.

\bibitem{chalmers}
G. Chalmers and W. Siegel,
\emph{The self-dual sector of QCD amplitudes},
Phys. Rev. D \textbf{54} (1996), 7628--7633.

\bibitem{deserteitelboim}
S. Deser and C. Teitelboim,
\emph{Duality transformations of Abelian and non-Abelian gauge fields},
Phys. Rev. D \textbf{13} (1976), 1592--1597.

\bibitem{donaldson}
S.~K. Donaldson,
\emph{An application of gauge theory to four-dimensional topology},
J. Differential Geom. \textbf{18} (1983), 279--315.

\bibitem{donaldsonforms}
S.~K. Donaldson,
\emph{Two-forms on four-manifolds and elliptic equations},
in \emph{Inspired by S.~S. Chern}, Nankai Tracts Math., vol.~11, World Scientific, 2006, 153--172.

\bibitem{donaldsonkronheimer}
S.~K. Donaldson and P.~B. Kronheimer,
\emph{The Geometry of Four-Manifolds},
Oxford Mathematical Monographs, Oxford University Press, 1990.

\bibitem{etesihausel}
G. Etesi and T. Hausel,
\emph{Geometric construction of new Yang--Mills instantons over Taub--NUT space},
Phys. Lett. B \textbf{514} (2001), 189--199.

\bibitem{fine}
J. Fine, K. Krasnov and D. Panov,
\emph{A gauge theoretic approach to Einstein $4$-manifolds},
New York J. Math. \textbf{20} (2014), 293--323.

\bibitem{freeduhlenbeck}
D.~S. Freed and K.~K. Uhlenbeck,
\emph{Instantons and Four-Manifolds},
2nd ed., Mathematical Sciences Research Institute Publications, vol.~1, Springer, 1991.

\bibitem{freidel}
L. Freidel,
\emph{Modified gravity without new degrees of freedom},
arXiv:0812.3200 [gr-qc], 2008.

\bibitem{gover}
A.~R. Gover, P. Somberg and V. Sou\v{c}ek,
\emph{Yang--Mills detour complexes and conformal geometry},
Comm. Math. Phys. \textbf{278} (2008), 307--327.

\bibitem{halpern}
M.~B. Halpern,
\emph{Field-strength formulation of quantum chromodynamics},
Phys. Rev. D \textbf{16} (1977), 1798--1801.

\bibitem{halpernselfdual}
M.~B. Halpern,
\emph{Gauge-invariant formulation of the self-dual sector},
Phys. Rev. D \textbf{16} (1977), 3515--3519.

\bibitem{plebanski}
J.~F. Pleba\'nski,
\emph{On the separation of Einsteinian substructures},
J. Math. Phys. \textbf{18} (1977), 2511--2520.

\bibitem{schoen}
R. Schoen,
\emph{Conformal deformation of a Riemannian metric to constant scalar curvature},
J. Differential Geom. \textbf{20} (1984), 479--495.

\bibitem{taubes}
C.~H. Taubes,
\emph{Self-dual Yang--Mills connections on non-self-dual $4$-manifolds},
J. Differential Geom. \textbf{17} (1982), 139--170.

\bibitem{trudinger}
N.~S. Trudinger,
\emph{Remarks concerning the conformal deformation of Riemannian structures on compact manifolds},
Ann. Scuola Norm. Sup. Pisa Cl. Sci. (3) \textbf{22} (1968), 265--274.

\bibitem{uhlenbeckLp}
K.~K. Uhlenbeck,
\emph{Connections with $L^p$ bounds on curvature},
Comm. Math. Phys. \textbf{83} (1982), 31--42.

\bibitem{uhlenbeckrem}
K.~K. Uhlenbeck,
\emph{Removable singularities in Yang--Mills fields},
Comm. Math. Phys. \textbf{83} (1982), 11--29.

\bibitem{yamabe}
H. Yamabe,
\emph{On a deformation of Riemannian structures on compact manifolds},
Osaka Math. J. \textbf{12} (1960), 21--37.

\bibitem{yangmills}
C.~N. Yang and R.~L. Mills,
\emph{Conservation of isotopic spin and isotopic gauge invariance},
Phys. Rev. \textbf{96} (1954), 191--195.

\end{thebibliography}
\end{document}